\definecolor{darkred}{RGB}{100,0,0}
\definecolor{darkgreen}{RGB}{0,100,0}
\definecolor{darkblue}{RGB}{0,0,150}
\newtheorem{thm}{Theorem}
\newtheorem{prp}{Proposition}
\newtheorem{lem}{Lemma}
\newtheorem{cor}{Corollary}
\def\beq{\begin{equation}}
\def\eeq{\end{equation}}
\def\beqn{\begin{eqnarray*}}
\def\eeqn{\end{eqnarray*}}
\def\bitem{\begin{itemize}}
\def\eitem{\end{itemize}}
\def\benum{\begin{enumerate}}
\def\eenum{\end{enumerate}}
\def\bmult{\begin{multline*}}
\def\emult{\end{multline*}}
\def\bcenter{\begin{center}}
\def\ecenter{\end{center}}
\DeclareMathOperator*{\argmax}{argmax}
\DeclareMathOperator*{\argmin}{argmin}
\DeclareMathOperator{\tr}{tr}
\def\cA{\mathcal{A}}
\def\cC{\mathcal{C}}
\def\cG{\mathcal{G}}
\def\bbK{\bbK}
\def\cR{\mathcal{R}}
\def\bE{\mathbf{E}}
\def\bX{\mathbf{X}}
\def\bZ{\mathbf{Z}}
\def\1{{\mathbf 1}}
\def\bbK{\mathbb{K}}
\def\bbR{\mathbb{R}}
\newcommand{\E}{\operatorname{\mathbb{E}}}
\renewcommand{\P}{\operatorname{\mathbb{P}}}
\newcommand{\CORD}{\textsc{Cord}}
\newcommand{\MCORD}{\textsc{MCord}}
\newcommand{\var}[1]{\operatorname{Var}\left(#1\right)}
\newcommand{\pa}[1]{\left(#1\right)}
\def\KL{\mathrm{KL}}
\title{ PECOK:   a convex optimization approach to variable clustering }
\author{Florentina Bunea\footnote{Department of Statistical Science, Cornell University, Ithaca 15853, New York, USA}, Christophe Giraud\footnote{Laboratoire de Math\'ematiques d'Orsay, Univ.\ Paris-Sud, CNRS, Universit\'e Paris-Saclay, 91405 Orsay, FRANCE}, Martin Royer\footnote{Laboratoire de Math\'ematiques d'Orsay, Univ.\ Paris-Sud, CNRS, Universit\'e Paris-Saclay, 91405 Orsay, FRANCE}, and  Nicolas Verzelen\footnote{INRA, UMR 729 MISTEA, F-34060 Montpellier, FRANCE}}
\begin{document}
\maketitle

{\bf Abstract.}
\begin{small}
The problem of variable clustering is that of grouping  similar components of a $p$-dimensional vector $X=(X_{1},\ldots,X_{p})$, and estimating these groups from $n$ independent  copies of $X$.  When cluster similarity  is  defined via $G$-latent models, in which groups of $X$-variables have a common latent generator, and groups are relative to a partition $G$ of the index set $\{1, \ldots, p\}$,  the most natural 
clustering strategy is $K$-means. We explain why this strategy cannot lead to perfect cluster recovery and offer a correction,  based on semi-definite programing, that can be viewed as a penalized convex relaxation of $K$-means (PECOK).  We introduce a cluster separation measure 
tailored to $G$-latent models, and derive its minimax lower bound  for perfect cluster recovery. The clusters estimated by PECOK are shown to recover $G$ at a near minimax optimal cluster separation rate, a result that holds true even if $K$, the number of clusters,  is estimated adaptively from the data.  We compare PECOK with appropriate corrections of spectral clustering-type procedures, and show that the former outperforms the latter for perfect cluster recovery of minimally separated clusters. 
\end{small}

\section{Introduction}

The problem of variable clustering is that of grouping  similar components of a $p$-dimensional vector $X=(X_{1},\ldots,X_{p})$. These groups are referred to as clusters. In this work 
we investigate the problem of cluster recovery from a sample of $n$ independent copies of $X$. 
Variable clustering has had a long history in a variety of fields,  with important examples  stemming from  gene expression data \cite{GEM2Net, FreiditFrey2014, GeneExpressionCluster} or protein profile data  \cite{Bernardes2015}. The solutions to this problem are typically algorithmic and entirely data based. They include applications of $K$-means, spectral clustering, or versions of them.   The statistical 
properties of these procedures  have received a very limited amount of investigation.  It is not currently known what probabilistic cluster model on $X$ can be estimated by these popular techniques,  or  by their modifications.  Our work offers an answer to this question.  

We  study  variable clustering in  $G$-models, introduced in Bunea et al. \cite{cord},  which is a class of  models that  offer a probabilistic framework for  cluster similarity.  For a given partition $G=\{ G_{k} \}_{k=1,\ldots,K}$ of $\{1,\ldots,p\}$, the most general  of these models, called  the $G$-block covariance model, makes the mild  assumption   that permuting two variables with indices in the same  group $G_k$ of the partition does not affect the covariance $\Sigma$ of the vector $X=(X_{1},\ldots,X_{p})$. 
Hence, the off-diagonal entries of the covariance $\Sigma$ depend only on the group membership of the entries, enforcing a block-structure on $\Sigma$. This block-structure  relative to a partition $G$ can be summarized by the matrix decomposition
\begin{equation}\label{eq:block}
\Sigma=A C A^t +\Gamma,
\end{equation}
where the $p \times K$  matrix  $A$ with entries $A_{ak} : =  1_{\{a\in G_k\}}$ assigns  the index of a variable $X_a$ to a group $G_{k}$, the matrix $C$ is symmetric,  and $\Gamma$ is a diagonal matrix with $\Gamma_{aa} = \gamma_{k}$,  for all $a \in G_k$. 

We focus on an important sub-class of the $G$-block covariance models, the class of $G$-latent models,  also discussed in detail in Bunea et al.  \cite{cord}.   We say that a  zero mean vector $X$  has a latent decomposition with respect to a generic partition $G$, if 
\beq\label{eq:latent}
X_{a}=Z_{k}+E_{a},\ \textrm{for all}\ a\in G_{k},\ \textrm{and all}\ k=1,\ldots, K, 
\eeq
with $Z=(Z_{1},\ldots,Z_{K})$ a $K$-dimensional zero-mean latent vector assumed to be independent of the zero-mean error vector $E=(E_{1},\ldots,E_{p})$, which itself has independent entries, and the error variances are  equal within a group.  It is immediate to see that if  (\ref{eq:latent}) holds,  then the covariance matrix $\Sigma$ of $X$ has a $G$-block structure (\ref{eq:block}), with $C = Cov(Z)$ and $\Gamma = Cov(E)$.  Therefore, the latent $G$-models are indeed a sub-class of the $G$-covariance models, and the models are not necessarily equivalent, 
since, for instance, the matrix $C$ in (\ref{eq:block}) can be negative definite.

Intuitively, it is clear that if  the clusters of $X$  are well separated, they will be easy to estimate accurately, irrespective of the model used to define them.  This motivates the need for introducing  metrics for  cluster separation that are tailored to $G$-models,  and for  investigating the quality of cluster estimation methods  relative to the size of cluster separation.  In what follows, we refer to the $X$-variables with indices in the same group $G_k$ of a partition $G$ given by either (\ref{eq:block}) or  (\ref{eq:latent}) as a cluster. 

 When the $G$-latent  model (\ref{eq:latent}) holds,  two clusters are separated if they have different generators. Therefore,  separation between clusters  can be measured in terms 
of the canonical  "within-between group" covariance gap
\beq \label{definition_delta_distance}
\Delta(C):= \min_{j<k}\left(C_{kk}+C_{jj}-2C_{jk} \right)=\min_{j<k}\bE\left[(Z_{j}-Z_{k})^2\right], 
\eeq
since $\Delta(C) = 0$ implies $Z_j = Z_k$ a.s.

When the  $G$-block covariance model (\ref{eq:block}) holds,  the canonical separation metric between clusters 
has already been considered in \cite{cord} and is given by $$\MCORD(\Sigma):=\min_{a\stackrel{G}{\nsim} b}\max_{c\neq a,b}|\Sigma_{ac}-\Sigma_{bc}|. $$
The two metrics are connected via the following chain of inequalities, valid as soon as the size of the smallest cluster is larger than one: 
\begin{equation}\label{equiv}   \Delta(C) \leq 2\MCORD(\Sigma) \leq 2 \sqrt{\Delta(C)}\ \max_{k=1,\ldots,K} \sqrt{C_{kk}}. \end{equation}

The  variable clustering algorithm  \CORD\  introduced in \cite{cord} was shown to recover clusters given by  (\ref{eq:block}), and in particular by  (\ref{eq:latent}),  
as soon as 
\begin{equation}\label{cord1} \MCORD(\Sigma) \gtrsim \sqrt{\log(p)\over n}.\end{equation} 
Moreover, the rate of    $\sqrt{\log(p)/n}$ was  shown  in \cite{cord} to be the minimax optimal cluster separation size for correct cluster recovery, 
with respect to  the $\MCORD$ metric, in both   $G$-block covariance matrix  and $G$-latent models. \\

The first inequality in (\ref{equiv}) shows that if we are interested in the class of $G$-latent models 
together with their induced canonical cluster separation metric $\Delta(C)$, the  \CORD\ algorithm of \cite{cord} also guarantees correct cluster recovery  as  soon as $\Delta(C)\gtrsim \sqrt{\log(p)/n}$.  However, the second inequality in  (\ref{equiv}) suggests that $\Delta(C)$ can be, in order, as small as $ [\MCORD(\Sigma)]^2$, implying that, with respect to this metric, we could recover clusters that are closer together. This motivates a full investigation of variable clustering in $G$-latent models (\ref{eq:latent}),  relative to the  $\Delta(C)$ cluster separation metric, as outlined below.

\subsection{Our contribution}
We assume that the data  consist in  i.i.d. observations $X^{(1)},\ldots,X^{(n)}$ of a random vector $X$ with mean 0 and covariance matrix $\Sigma$, for which   (\ref{eq:latent}) holds relative to a partition  $G$. Our work  is 
devoted to the development of a computationally feasible method that yields an estimate 
 $\widehat{G}$ of $G$, such that  $\widehat{G} = G$, with high probability, when  $\Delta(C)$ is as small as possible, and to the  characterization of the  minimal value  of $\Delta(C)$, from a minimax perspective.

We  begin by highlighting  our main results. For simplicity, we discuss here the case where the $K$ clusters have a similar size, so that the size of the smaller cluster is $m\approx p/K$. We refer to Section~\ref{sec:convex} for the general case. 
When $X$ is Gaussian, Theorem \ref{prp:minimax_lower_bound}  below  shows that  no algorithm can estimate $G$ correctly,  with high probability,  when the latent model (\ref{eq:latent}) holds with  $ C$ fulfilling
$$\Delta(C) \lesssim |\Gamma|_{\infty} \left(\sqrt{\log(p)\over nm}\bigvee {\log(p)\over n}\right). $$
This result shows that the minimax optimal value for exact variable clustering according to model (\ref{eq:latent}),  and with respect to the metric  $\Delta(C)$,  can be much smaller than the above-mentioned $\sqrt{{\log(p)}/{n}}$ and   that the threshold for $\Delta(C)$ is  sensitive to the size of $m$:  As $m$ increases 
 the clustering problem becomes easier, in that  a smaller degree of cluster  separation is needed for exact  recovery. This property is in contrast with the fact that the  $\MCORD$ metric is not affected by the size of $m$. 

 Our main result, Theorem \ref{thm:consistency} of  Section \ref{sec:convex}, shows that perfect cluster recovery  is possible,  with high probability,  via  the polynomial-time  PECOK algorithm outlined below,  when 
\beq\label{condition:intro}
\Delta(C) \gtrsim |\Gamma|_{\infty} \left(\sqrt{\log(p)\vee K\over nm}\bigvee {\log(p)\vee K\over n}\right).
\eeq
  PECOK  is therefore minimax optimal as long as the number $K$ of clusters is bounded from above by $\log(p)$, and nearly  minimax optimal otherwise.   To describe  our procedure, we  begin by defining the block matrix $B$ with entries 
\beq\label{B:matrix}
B_{ab}= \begin{cases} {1\over |G_{k}|} & \textrm{if $a$ and $b$ are in the same group $G_{k}$,}\\
0 & \textrm{if $a$ and $b$ are in a different group.}
\end{cases}
\eeq
The groups in  a partition $G$  are in a one-to-one correspondence with the non-zero blocks of $B$.
Our PECOK algorithm has three steps, and the main step 2 produces an estimator $\widehat B$ of $B$ from which we derive the estimated partition $\widehat G$. 
The three steps of PECOK are:
\begin{enumerate}
\item Compute an estimator $\widehat \Gamma$ of the matrix $\Gamma$.
\item Solve the semi-definite program (SDP)
\begin{equation}\label{SDP1}
\widehat B=\argmax _{B \in \mathcal{C}}\langle \widehat{\Sigma} - \widehat{\Gamma}, B\rangle, \end{equation}
where  $\widehat \Sigma$ the empirical covariance matrix and
\beq\label{eq:domain}
\mathcal{C}:=\left\{ B \in \bbR^{p\times p}:
                \begin{array}{l}
                  \bullet\ B  \succcurlyeq 0 \  \ \text{(symmetric and positive semidefinite)} \\
                  \bullet\  \sum_a B_{ab} = 1,\ \forall b\\
		\bullet\ B_{ab}\geq 0,\ \forall a,b\\
		\bullet\ \tr(B) = K
                \end{array}
              \right\}.  
  \eeq
\item Compute $\widehat G$ by applying a clustering algorithm on the rows (or equivalently columns) of $\widehat B$.
\end{enumerate}
The construction of an  accurate estimator $\widehat \Gamma$ of $\Gamma$  is a  crucial step for guaranteeing the statistical optimality of  the PECOK estimator.  Estimating $\Gamma$ before estimating the partition  itself is a non-trivial task, and needs to be done with extreme care.  We devote 
Section \ref{sec:two_steps} below to the construction of an estimator $\widehat{\Gamma}$ for which the results of Theorem  \ref{thm:consistency}  hold. 

Sections  \ref{sec:convex_relaxation_Kmeans} and \ref{sec:subsect_perfect} are devoted to the second step. In Section \ref{sec:convex_relaxation_Kmeans}  we motivate the SDP (\ref{SDP1}) for estimating $B$, and present its analysis in Section \ref{sec:two_steps}, proving that $\widehat B=B$ when (\ref{condition:intro}) holds.  The required inputs for Step 2 of our algorithm are:  
(i)  $\widehat{\Sigma}$,  
the sample covariance matrix; (ii) $\widehat{\Gamma}$, the estimator produced at Step 1; and (iii) $K$, the number of groups.   When  $K$  is not known,  we offer a procedure for selecting it in a data adaptive fashion  in Section \ref{sec:adaptaption} below. Theorem \ref{thm:Kselection} of this section  shows that the resulting estimator enjoys the same properties as $\widehat{B}$ given by   (\ref{SDP1}), under the same conditions. The construction of $\widehat{B}$, when  $K$ is either known, or estimated from the data, requires  solving  the SDP (\ref{SDP1}) or a variant of it, over the convex domain $\mathcal{C}$.

  Finally, in Step 3,  we recover  the estimated partition $\widehat{G}$ from $\widehat{B}$ by  using any clustering  method  that employs the rows or, equivalently, columns, 
of $\widehat{B}$ as input. This step is done at no additional accuracy cost, as shown in Corollary \ref{cor:consistency_2_steps2},  Section \ref{sec:subsect_perfect} below. 

We summarize our three-fold contributions below. 

\begin{itemize}
\item[(i)] {\bf Minimax lower bounds.}  In Theorem \ref{prp:minimax_lower_bound} we establish minimax limits on the size of the $\Delta(C)$-cluster separation metric, for exact partition recovery,  over  the class of identifiable  $G$-latent variable models. The bounds indicate  that the difficulty of the problem decreases not only  when the sample size $n$ is large, but also, as expected, when $m$, the size of the smallest cluster, is large. 

 To the best of our knowledge,  this is  the first result of this nature for clustering via 
$G$-latent variable models. Our results  can be contrasted with the minimax cluster separation rates in \cite{cord} with respect to the $\MCORD$ metric.  They can also be contrasted with results regarding clustering of $p$ variables from data of a  different nature, network data, based on a different model,  the Stochastic Block Model (SBM). We elaborate on this point  in the remark below.

\item [(ii)] {\bf Near minimax-optimal procedure.}  We introduce and analyze PECOK, a new variable clustering procedure based on Semi-Definite Programing (SDP) for variable clustering,  that can be used  either when the number of clusters, $K$, is  known, or when it is unknown, in which case we estimate it. We prove that, in either case,  the resulting partition  estimator  $\widehat{G}$ recovers $G$, with high probability, at a near-optimal $\Delta(C)$-cluster separation rate. 
\item[(iii)] {\bf  Perfect partition recovery requires corrections of $K$-means  or spectral clustering.}  We view PECOK as a correction 
of existing clustering strategies, the correction being tailored to $G$-models.  We use the connection between  $K$-means clustering and SDP outlined in Peng and Lei \cite{PengWei07} to explain  
why $K$-means cannot perfectly recover $G$ in latent models, and present the details in Section \ref{sec:convex_relaxation_Kmeans}. 
To the best of our knowledge, this is the first work that addresses the statistical properties of corrections of $K$-means for variable clustering. 
 Moreover, we  
connect PECOK with another popular algorithm, spectral clustering, and explain why the latter
cannot be directly used, in general,  for perfect clustering in $G$-models. The details are presented in Section \ref{sec:spectral}. 
\end{itemize}

\paragraph{Remark.} 
Variable clustering from {\it network data} via  Stochastic Block Models (SBM) has received a large amount of attention 
in the past years.  We contrast our contribution with that made in the SBM  literature, for instance in  \cite{guedon2014community,LeiRinaldo,chen2014statistical,lei2014generic,abbe2015community,mossel2014consistency,le2014optimization}. Although  general strategies such as SDP and  spectral clustering methods are also  employed for  cluster  estimation  from network data, and  share some similarities with  those analyzed in this manuscript, there are important differences. The most important difference stems from the nature of the data:  the data analyzed via SBM is a $p\times p$ binary  matrix, called the adjancency matrix,  with entries assumed to have been generated as independent Bernoulli random variables. In contrast,  the data matrix $\bX$ generated from a G-latent model is a $n\times p$ matrix with real entries, and rows viewed as i.i.d copies of a $p$-dimensional vector with dependent entries. In the SBM literature,  SDP-type  and spectral clustering procedures are directly applied to the adjacency matrix, whereas we need to apply them to the empirical covariance matrix $\widehat{\Sigma}:=\bX^t \bX/n$, not directly on the observed ${\bf X}$. This  has important repercussions  on the statistical analysis of the cluster estimates.  Contrary to the SBM framework, $\widehat{\Sigma}$ does not simply decompose as the sum between  the clustering signal ``$A C A^t$'' \eqref{eq:block} and a  noise component, but its decomposition also  contains cross-product terms.  The analysis of these additional terms  is non-standard,  and needs to be done with care, as illustrated by the proof of  our Theorem \ref{thm:consistency}. Moreover, in contrast to procedures tailored to  the SBM underlying model, SDP and spectral methods for $G$-latent models need to be corrected in a non-trivial fashion, as mentioned in (iii) above.

\subsection{Organization of the paper.}

In Section \ref{sec:model} we give conditions for partition identifiability in $G$-latent models. We also introduce the notation used throughout the paper.  In Section \ref{sec:convex_relaxation_Kmeans} we present the connections 
between $K$-means and PECOK. In Section \ref{sec:two_steps} we prove that one can construct an estimator of $\Gamma$, with $\sqrt{\log p/n}$ accuracy in supremum norm, before estimating the partition $G$. 
In Section \ref{sec:subsect_perfect}, Theorem \ref{thm:consistency}  and Corollary \ref{cor:consistency_2_steps2}
show  that PECOK can recover $G$ exactly, with high probability, as long as $\Delta(C)$ is sufficiently large and the number of groups $K$ is known. In Section \ref{sec:adaptaption}, Theorem \ref{thm:Kselection} and Corollary \ref{cor:Kselection} show that the same results hold, under the same conditions, when Step 2 of the PECOK algorithm 
is modified to include a step that allows  for the data dependent  selection of $K$.  Theorem \ref{prp:minimax_lower_bound} of Section \ref{sec:minimax}
gives  the  minimax lower bound for the $\Delta(C)$  cluster separation for perfect recovery. 
In Section \ref{sec:spectral} we give the connections between PECOK and spectral clustering. Theorem \ref{prp:spectral_clustering} gives sufficient conditions under which spectral clustering 
recovers partially the target partition in $G$-models, as a function of a given misclassification proportion.    We present extensions in Section \ref{sec:discussion}. All our proofs are collected in Section \ref{sec:proofs} and the Appendix.

\section{The $G$-latent variable model} \label{sec:model}

\subsection{Model and Identifiability}

 We begin by observing that   as soon as  the latent decomposition (\ref{eq:latent}) holds for $G$ it also holds for a sub-partition of $G$.  It is natural therefore  to seek the smallest of such partitions, that is the partition $G^*$ with the least number of groups for which (\ref{eq:latent}) holds.  The smallest partition is defined with respect to a partial order on sets,  and one can have multiple minimal partitions.  If a $G$-model holds with respect to a unique minimal partition $G^*$, we call the partition identifiable. We present below sufficient conditions for identifiability that do not require 
 any distributional assumption for $X$.   We assume that  $X$ is a centered random variable with covariance matrix $\Sigma$.  If $X$  follows the latent  decomposition (\ref{eq:latent}) with respect to $G^*$, we recall that    
\begin{equation}\label{eq:block1}
\Sigma=A C^* A^t +\Gamma. 
\end{equation}
with $C^*=cov(Z)$, a semi-positive definite matrix, and $\Gamma$ a diagonal matrix.  We also assume that the size $m$ of the smallest cluster is larger than 1. Since $C^*$ is positive semi-definite, we always have
$$\Delta(C^*)=\min_{j<k} (e_{j}-e_{k})^tC^*(e_{j}-e_{k})\geq 2 \lambda_{K}(C^*)\geq 0.$$
Lemma  \ref{lem:identifiable}  below shows that  requiring $\Delta(C^*)\neq 0$  ensures that the latent  decomposition (\ref{eq:latent}) holds with respect to a unique 
 partition $G^*$. 
\begin{lem}\label{lem:identifiable}
If the latent decomposition (\ref{eq:latent}) holds with $m>1$ and $\Delta(C^*)>0$, then the partition $G^*$ is identifiable.
\end{lem}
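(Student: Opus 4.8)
The plan is to show that if two partitions $G$ and $G'$ both satisfy the latent decomposition \eqref{eq:latent} with the positivity condition $\Delta(C^*) > 0$ on their respective latent covariance matrices, then $G$ and $G'$ admit a common refinement that also satisfies \eqref{eq:latent}; since we are looking for the \emph{minimal} such partition, this will force $G = G'$ once we rule out that a proper refinement could also satisfy \eqref{eq:latent} with strictly positive $\Delta$. More concretely, I would argue directly at the level of the entries of $\Sigma$. Suppose $X$ follows \eqref{eq:latent} relative to $G^*$, so $\Sigma = A C^* A^t + \Gamma$ with $\Delta(C^*) > 0$ and $m > 1$. The key observation is that membership in the same group of $G^*$ is characterized \emph{intrinsically} by $\Sigma$: for indices $a, b$ with $a \ne b$, one should have $a \stackrel{G^*}{\sim} b$ if and only if $\Sigma_{ac} = \Sigma_{bc}$ for every $c \notin \{a,b\}$.

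The first step is to verify the ``only if'' direction, which is immediate: if $a, b \in G_k^*$, then for any $c \in G_\ell^*$ with $c \ne a, b$ we have $\Sigma_{ac} = C^*_{k\ell} = \Sigma_{bc}$ (the diagonal term $\Gamma$ does not intervene since $c \ne a,b$). The second, and main, step is the ``if'' direction: suppose $\Sigma_{ac} = \Sigma_{bc}$ for all $c \ne a,b$, and suppose for contradiction that $a \in G_k^*$ and $b \in G_\ell^*$ with $k \ne \ell$. Here I would use $m > 1$ to pick an auxiliary index: since $|G_k^*| \geq 2$, choose $c \in G_k^*$ with $c \ne a$ (and note $c \ne b$ since $b \notin G_k^*$), and similarly choose $d \in G_\ell^*$ with $d \ne b$. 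Evaluating the hypothesis at $c$ gives $C^*_{kk} = \Sigma_{ac} = \Sigma_{bc} = C^*_{\ell k}$, and evaluating at $d$ gives $C^*_{k\ell} = \Sigma_{ad} = \Sigma_{bd} = C^*_{\ell\ell}$. Combining, $C^*_{kk} + C^*_{\ell\ell} - 2C^*_{k\ell} = C^*_{\ell k} + C^*_{k\ell} - 2C^*_{k\ell} = 0$, contradicting $\Delta(C^*) > 0$. Hence $k = \ell$, establishing the characterization.

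The third step is to conclude identifiability. The equivalence relation ``$a \sim b$ iff $a = b$ or $\Sigma_{ac} = \Sigma_{bc}$ for all $c \ne a,b$'' is defined purely in terms of $\Sigma$, hence does not depend on the choice of partition; by the characterization just proved, its classes are exactly the groups of $G^*$. If $G'$ is any other partition for which \eqref{eq:latent} holds with $m' > 1$ and $\Delta(C') > 0$, the same argument shows its groups are also exactly these classes, so $G' = G^*$. A small point to handle carefully is the hypothesis $m > 1$: it is needed precisely to guarantee the existence of the auxiliary indices $c$ and $d$, and the paper's statement already assumes it. One should also note that if a sub-partition $G$ of $G^*$ satisfied \eqref{eq:latent} with $\Delta$ positive, the characterization applied to $G$ would recover $G^*$'s classes rather than $G$'s, unless $G = G^*$; this is consistent, and in fact the cleanest phrasing is simply that the equivalence classes of the intrinsic relation form the unique minimal partition.

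I expect the main (minor) obstacle to be bookkeeping around the edge cases: ensuring the auxiliary indices $c, d$ are genuinely distinct from $a$ and $b$ (which is where $m > 1$ and $k \ne \ell$ are used), and being careful that the characterization is stated for $a \ne b$ only, since the diagonal entries $\Sigma_{aa} = C^*_{kk} + \gamma_k$ mix in the noise variances and are not directly comparable. None of this requires serious computation; the argument is essentially the three displayed lines above.
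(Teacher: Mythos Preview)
Your proposal is correct and follows essentially the same route as the paper's proof. The paper defines $V(a)=\{a'\neq a:\max_{c\neq a,a'}|\Sigma_{ac}-\Sigma_{a'c}|=0\}$ and shows $V(a)=G^*_{k^*(a)}\setminus\{a\}$ by exactly your two-direction argument: the ``only if'' direction is immediate from the block structure, and for the ``if'' direction one picks, using $m>1$, auxiliary indices in each of the two putatively different groups to force $C^*_{kk}+C^*_{\ell\ell}-2C^*_{k\ell}=0$, contradicting $\Delta(C^*)>0$. Your opening sentence about common refinements is a detour you rightly abandon; the intrinsic $\Sigma$-based characterization is the whole argument, and your handling of the edge cases ($c\neq b$, $d\neq a$, diagonal entries excluded) is correct.
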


We remark that when $m = 1$, the partition may not be identifiable, and we  refer to   \cite{cord} for a counterexample. We also remark that when $\Delta(C^*)=0$ and 
  $\Gamma=\gamma I$, the partition $G^*$ is not identifiable. Therefore, the sufficient conditions for identifiability given by Lemma~\ref{lem:identifiable}  are almost necessary, and we refer to Section~\ref{sec:discussion} for further discussion. \\

In the remaining of the paper, we assume that we observe $n$ i.i.d. realizations $X^{(1)},\ldots,X^{(n)}$ of a vector $X$ following the latent  decomposition (\ref{eq:latent}) with $m>1$ and $\Delta(C^*)>0$, so that the partition $G^*$ is identifiable. We will also assume that $X \sim N(0, \Sigma)$. We refer to Section~\ref{sec:discussion} for the sub-Gaussian case.

\subsection{Notation} 
In the sequel, $\bX$,  $\bE$, and $\bZ$ respectively refer to the $n\times p$ (or $n\times K$ for $\bZ$)  matrices obtained by stacking in rows the realizations  $X^{(i)}$, $E^{(i)}$ and $Z^{(i)}$, for $i=1,\ldots,n$. We use $M_{:a} $, $M_{b:}$,  to denote the $a$-th column or, respectively,  $b$-th row of  a generic matrix $M$. 
The sample covariance matrix $\widehat{\Sigma}$ is defined by 
\[ \widehat{\Sigma} = \frac{1}{n} {\bf X}^{t}{\bf X}={1\over n}\sum_{i=1}^nX^{(i)}(X^{(i)})^t.  \]

Given a vector $v$ and $p\geq 1$, $|v|_{p}$ stands for its $\ell_p$ norm. Similarly $|A|_p$ refers to the entry-wise $\ell_p$ norm. Given a matrix $A$, $\|A\|_{op}$ is its operator norm and $\|A\|_F$ refers to the Frobenius norm. 
The bracket $\langle.,.\rangle$ refers to the Frobenius scalar product. Given a matrix $A$, denote $\mathrm{supp}(A)$ its support, that is the set of indices $(i,j)$ such that $A_{ij}\neq 0$.  We use $[p]$ to denote the set $\{1, \ldots, p\}$ and $I$ to denote the identity matrix. 

We define the variation semi-norm of a diagonal matrix $D$ as $|D|_{V}:=\max_{a} D_{aa}-\min_{a} D_{aa}$.  We use $B \succcurlyeq 0$ to denote a symmetric and positive semidefinite matrix. 
We use the notation $a\stackrel{G}{\sim}b$ whenever $a, b \in G_{k}$, for the same $k$. Also,  $m=\min_k |G_k|$ stands for the size of the smallest group.

The notation $\gtrsim$ and $\lesssim$ is used for whenever the inequalities hold up to multiplicative numerical constants.

\section{Perfect clustering with PECOK}\label{sec:convex}

\subsection{ A convex relaxation of penalized $K$-means } \label{sec:convex_relaxation_Kmeans}

For the remaining of the paper we will discuss the estimation of the identifiable partition $G^*$ discussed above. 
Knowing $G^*$ is equivalent with knowing whether  ``$a$ and $b$ are in the same group'' or ``$a$ and $b$ are in different groups", which is encoded by  the normalized partnership matrix $B^*$ given by  (\ref{B:matrix}).  In what follows, we  provide a constructive representation of $B^*$, that holds when  $\Delta(C^*) > 0$, and that can be used as the basis of an estimation procedure. 

To motivate this representation, we begin by noting that the most natural variable clustering strategy, when the  $G$-latent model (\ref{eq:latent}) holds, would be  $K$-means \cite{Lloyd}, when $K$ is known. The estimator offered  by the standard $K$-means algorithm is 
$$ \widehat{G} \in\mathop{\textrm{argmin}}_{G}\text{crit}({\bf X},G)\quad \textrm{with}\quad \text{crit}({\bf X},G)=\sum_{a=1}^p\min_{k=1,\ldots,K}\|{\bf X}_{:a}-\bar {\bf X}_{G_{k}}\|^2,$$
and $\bar {\bf X}_{G_{k}}=|G_{k}|^{-1} \sum_{a\in G_{k}} {\bf X}_{:a}$. 
Theorem 2.2 in Peng and Wei \cite{PengWei07} shows that solving the $K$-means problem is equivalent to finding the global maximum 
\begin{equation}\label{KSDP1}
\widehat B=\argmax_{B \in \mathcal{D}} \langle \widehat \Sigma,B\rangle \end{equation}
for $\mathcal{D}$ given by 
\begin{equation}\label{D}
\mathcal{D}:=\left\{ B \in \bbR^{p\times p}:
                \begin{array}{ll}
                  \bullet\ B \succcurlyeq 0 \\
                  \bullet\  \sum_a B_{ab} = 1,\ \forall b\\
		\bullet\ B_{ab}\geq 0,\ \forall a,b\\
		\bullet\ \tr(B) = K \\
                  \bullet \  B^2 = B
                \end{array}
              \right\},
\end{equation}
and then recovering $\widehat{G}$ from $\widehat{B}$.    However,  we show below that we cannot expect the $K$-means estimator  $\widehat{B}$ given by (\ref{KSDP1})  to equal $B^*$, with high probability, unless additional conditions are met. This stems from the fact that $B^*$ does not equal   $\underset{B  \in \mathcal D}{\argmax} \  \langle {\Sigma} , B \rangle $  under the  identifiability condition $\Delta(C^*) > 0$, but rather under the stronger condition  (\ref{condition:pop}) below,  which is shown in Proposition \ref{prp:pop_algo}  to be  sufficient. 
 \begin{prp}\label{prp:pop_algo} 
Assume model (\ref{eq:latent}) holds.  If 
 \begin{equation}\label{condition:pop}
 \Delta(C^*)>   \frac{2}{m}|\Gamma|_{V} ,
 \end{equation}
 then
 \begin{equation} \label{kmeans}B^* =  \underset{B  \in \mathcal D}{\argmax} \  \langle {\Sigma} , B \rangle.\end{equation}
  \end{prp}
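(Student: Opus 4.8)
The plan is to recognize the optimization problem $\max_{B\in\mathcal D}\langle\Sigma,B\rangle$ as exactly the $K$-means objective at the population level, and then to directly compare the value at $B^*$ against the value at any competitor $B\in\mathcal D$. Since $\Sigma = AC^*A^t + \Gamma$ and the decomposition is linear in $B$, I would split $\langle\Sigma,B\rangle = \langle AC^*A^t,B\rangle + \langle\Gamma,B\rangle$. The first term $\langle AC^*A^t,B\rangle$ is the clustering-signal term, and because elements of $\mathcal D$ are precisely the normalized partnership matrices of $K$-partitions (the constraint $B^2=B$ together with $B\succcurlyeq 0$, $B\mathbf 1=\mathbf 1$, $B\geq 0$, $\tr(B)=K$ forces this, by Peng–Wei), I can write any $B\in\mathcal D$ as $B_H$ for a partition $H=\{H_k\}$ with $K$ groups, and rewrite $\langle AC^*A^t,B_H\rangle$ and $\langle\Gamma,B_H\rangle$ as explicit sums over the blocks of $H$ weighted by $1/|H_k|$.

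The core of the argument is to show $\langle\Sigma,B^*\rangle > \langle\Sigma,B_H\rangle$ for every $B_H\neq B^*$. First I would handle the signal term: using $\E\|X_{:a}-\bar X_{H_k}\|^2$-type identities (equivalently, the well-known fact that $\langle\Sigma,B_H\rangle = \sum_a \Sigma_{aa} - \E[\mathrm{crit}(X,H)]$ in expectation), reduce the comparison to showing that the true partition $G^*$ strictly minimizes the population $K$-means criterion, with a quantitative gap. The signal part of that gap is controlled from below by $\Delta(C^*)$ times a combinatorial factor measuring how far $H$ is from $G^*$: roughly, misassigning even one index $a$ from its true group $G^*_k$ into a wrong group costs at least something like $\frac{1}{2}\Delta(C^*)$ in the latent geometry, since the latent generators $Z_j, Z_k$ differ in mean-square by at least $\Delta(C^*)$. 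Meanwhile the $\Gamma$-term can only help the wrong partition by at most the variation $|\Gamma|_V$ of the diagonal, divided by the relevant cluster sizes — this is where the factor $2/m$ enters, $m$ being the smallest group size, so that $\langle\Gamma,B^*\rangle - \langle\Gamma,B_H\rangle \geq -\frac{2}{m}|\Gamma|_V$ per misassigned index (or an analogous bookkeeping). Combining, the net change when moving from $B^*$ to any $B_H$ is at least $\bigl(\frac12\Delta(C^*) - \frac1m|\Gamma|_V\bigr)$ times a positive combinatorial quantity, which is strictly positive precisely under hypothesis \eqref{condition:pop}.

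The main obstacle I anticipate is the combinatorial bookkeeping that turns "$H\neq G^*$" into a clean lower bound on the signal gap: one cannot simply count misclassified indices, because the map between partitions is many-to-many and a single wrong index changes two block-sums simultaneously (the block it leaves and the block it joins), with the $1/|H_k|$ normalization changing too. The right way to organize this is probably to argue it suffices to consider $H$ obtained from $G^*$ by a single "elementary move" (splitting a group, merging two groups, or transferring indices), show strict improvement for each such move, and then invoke convexity of the value function or a path argument on the lattice of partitions to conclude for general $H$; alternatively, one works directly with the identity $\langle\Sigma,B_H\rangle=\mathrm{tr}(\Sigma)-\E[\mathrm{crit}(X,H)]$ and lower-bounds the criterion gap $\E[\mathrm{crit}(X,H)]-\E[\mathrm{crit}(X,G^*)]$ by exploiting that within each true group $X_a$ and $X_b$ share a generator while across groups they do not. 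I would keep the write-up at the level of these block-sum identities and the two-sided bound "signal gain $\geq \frac12\Delta(C^*)\cdot(\text{positive})$, spurious $\Gamma$-gain $\leq \frac1m|\Gamma|_V\cdot(\text{same positive})$", deferring the elementary-move reduction to a short lemma.
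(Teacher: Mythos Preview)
Your high-level strategy matches the paper's: split $\langle\Sigma, B^*-B\rangle$ into the signal piece $\langle AC^*A^t, B^*-B\rangle$ and the diagonal piece $\langle\Gamma, B^*-B\rangle$, and show the former dominates the latter by a factor involving $\Delta(C^*)$ versus $|\Gamma|_V/m$. Where you diverge is in the execution. You parametrize $B\in\mathcal D$ as the partnership matrix $B_H$ of a partition $H$ and propose combinatorial bookkeeping on misassigned indices, deferring the crux to an ``elementary-move reduction'' or a path argument on partitions. The paper sidesteps all of this with two algebraic observations valid for every $B$ in the convex relaxation $\mathcal C\supset\mathcal D$. First, since $B1=B^*1=1$, one has $\langle v1^t+1v^t, B^*-B\rangle=0$ for any vector $v$; taking $v=\mathrm{diag}(AC^*A^t)$ yields immediately
\[
\langle AC^*A^t, B^*-B\rangle \;=\; \sum_{j\neq k}\Big(\tfrac{C^*_{jj}+C^*_{kk}}{2}-C^*_{jk}\Big)\,|B_{G_jG_k}|_1\ ,
\]
with no partition enumeration. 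Second, since $\tr(B)=\tr(B^*)=K$, one may recenter $\Gamma$ by a multiple of $I$ and then use $\tr(B_{G_kG_k})\geq 1^tB_{G_kG_k}1/|G_k|$ (from positive semidefiniteness) to obtain $\langle\Gamma,B^*-B\rangle\geq -\tfrac{|\Gamma|_V}{m}\sum_{j\neq k}|B_{G_jG_k}|_1$, with the \emph{same} weighting. Summing gives a strictly positive lower bound whenever $\mathrm{supp}(B)\not\subseteq\mathrm{supp}(B^*)$, and a short uniqueness lemma (the only $B\in\mathcal C$ supported in $\mathrm{supp}(B^*)$ is $B^*$ itself) finishes.

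Your deferred lemma, by contrast, is not short, and the elementary-move approach is genuinely problematic: the set of $K$-partitions is not connected by single-index transfers along which $\langle\Sigma,B_H\rangle$ is monotone (already $G^*=\{\{1,2\},\{3,4\}\}$ versus $H=\{\{1,3\},\{2,4\}\}$ is awkward), and merges or splits leave the set of $K$-partitions. Your alternative via $\E[\mathrm{crit}(X,H)]$ is viable in principle, but you still owe the step that the ``same positive'' quantity controls both the signal gain and the $\Gamma$-loss; that is exactly what the constraint-based identities $B1=1$ and $\tr(B)=K$ deliver for free in the paper's argument, and it is the missing idea in your proposal.
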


\noindent Proposition \ref{prp:counter_example} shows that, moreover,  Condition  (\ref{condition:pop})  is needed. 

 \begin{prp}\label{prp:counter_example}
 Consider the model \eqref{eq:block} with \[C^*=\left[{\scriptsize \begin{array}{ccc} 
    \alpha & 0 & 0\\
    0 & \beta & \beta-\tau\\
    0 & \beta-\tau & \beta
   \end{array}}\right]\ , \quad \quad 
\Gamma=\left[{\scriptsize\begin{array}{ccc}
\gamma_+&0&0\\ 0& \gamma_-&0 \\ 0& 0& \gamma_-
                 \end{array}}\right],\quad \text{ and \ }|G_1|=|G_1|=|G_3|=m\ .\]
The population maximizer  $B_{\Sigma}=\argmax_{B\in\mathcal{D}} \langle  \Sigma, B\rangle$ is not equal to $B^*$ as soon as 
\[2\tau =\Delta(C^*)< \frac{2}{m}|\Gamma|_{V}\ .\] 
 \end{prp}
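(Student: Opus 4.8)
The plan is to exhibit an explicit alternative partition $G'$ whose associated matrix $B'\in\mathcal{D}$ achieves a strictly larger value of $\langle\Sigma,B\rangle$ than $B^*$ does, under the stated condition $2\tau=\Delta(C^*)<\frac{2}{m}|\Gamma|_V$. Since $B^*=\argmax_{B\in\mathcal{D}}\langle\Sigma,B\rangle$ would require $\langle\Sigma,B^*\rangle\geq\langle\Sigma,B\rangle$ for all $B\in\mathcal{D}$, producing one such $B'$ with $\langle\Sigma,B'\rangle>\langle\Sigma,B^*\rangle$ suffices. The natural candidate for $G'$ is the partition obtained from $G^*=\{G_1,G_2,G_3\}$ by \emph{merging} the two ``close'' clusters $G_2$ and $G_3$ (whose generators satisfy $\E[(Z_2-Z_3)^2]=C^*_{22}+C^*_{33}-2C^*_{23}=2\tau$, small) and \emph{splitting} the remaining cluster $G_1$ into two pieces so that the total number of groups stays $K=3$, keeping $B'\in\mathcal{D}$ (the trace constraint $\tr(B')=K=3$ is what forces a compensating split). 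Since $|G_1|=m$, split $G_1$ into a singleton and the remaining $m-1$ indices, or into two halves; the exact split will be chosen to make the computation cleanest.

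The key computational step is to evaluate $\langle\Sigma,B\rangle$ for a block matrix $B$ associated to a partition. Using $\Sigma=AC^*A^t+\Gamma$ and the block form \eqref{B:matrix}, one gets a clean formula: for a partition with groups $H_1,\dots,H_K$,
\[
\langle\Sigma,B\rangle=\sum_{\ell}\frac{1}{|H_\ell|}\sum_{a,b\in H_\ell}\Sigma_{ab}
=\sum_\ell\Bigl(\text{(diagonal contribution of }\Gamma\text{ and }C^*)+\text{(off-diagonal }C^*\text{ within }H_\ell)\Bigr).
\]
Because $\Gamma$ is constant on each original group, the $\Gamma$-contribution is easy to track; merging $G_2,G_3$ into one block of size $2m$ contributes $\frac{1}{2m}\sum_{a,b\in G_2\cup G_3}\Sigma_{ab}$, which when expanded against $C^*$ picks up the cross term $-\tau$ and the $\gamma_-$ entries. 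Splitting $G_1$ (on which $C^*$ acts as the scalar $\alpha$, so $X$-variables there are exchangeable with covariance $\alpha+\gamma_+\delta_{ab}$) into two sub-blocks costs exactly a term of order $\gamma_+$ coming from the ``lost'' diagonal weight. Carrying out the bookkeeping, the net change $\langle\Sigma,B'\rangle-\langle\Sigma,B^*\rangle$ should reduce to a comparison of the form $(\text{const})\cdot\tfrac{1}{m}\gamma_+$-type gain from the merge versus a $\tau$-type loss, and the hypothesis $2\tau<\frac{2}{m}|\Gamma|_V=\frac{2}{m}(\gamma_+-\gamma_-)$ is precisely what makes the gain win. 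I would set up the two sums side by side, cancel the common terms, and read off the inequality.

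The main obstacle is purely bookkeeping: getting the combinatorial weights right when a block of size $m$ is split and a block of size $2m$ is formed, and making sure the resulting $B'$ genuinely lies in $\mathcal{D}$ (nonnegativity, row sums $1$, $\tr=K$, and $B'^2=B'$ all hold automatically for a genuine partition matrix, so the only real check is that $G'$ is a valid partition with exactly $3$ groups). A secondary subtlety is to confirm that $B^*$ is indeed the unique maximizer under \eqref{condition:pop} (from Proposition~\ref{prp:pop_algo}) so that the failure of \eqref{condition:pop} is not merely a gap in the sufficient condition but genuinely breaks the conclusion — here exhibiting $B'$ with a \emph{strict} inequality does the job directly, so no extra uniqueness argument is needed. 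I would also double-check the boundary case $2\tau=\frac{2}{m}|\Gamma|_V$ is excluded by the strict inequality in the statement, consistent with \eqref{condition:pop} being a strict inequality as well.
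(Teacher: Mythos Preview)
Your proposal is correct and follows essentially the same approach as the paper: exhibit a competing partition matrix $B'\in\mathcal{D}$ obtained by merging $G_2\cup G_3$ and splitting $G_1$, then compare $\langle\Sigma,B'\rangle$ with $\langle\Sigma,B^*\rangle$ directly. The paper splits $G_1$ into two equal halves of size $m/2$ and computes $\langle\Sigma,B^*\rangle=\gamma_++2\gamma_-+m\,\tr(C^*)$ and $\langle\Sigma,B'\rangle=2\gamma_++\gamma_-+m\,\tr(C^*)-m\tau$, giving the difference $(\gamma_+-\gamma_-)-m\tau$; note that by homogeneity within $G_1$ the particular split (halves, singleton, etc.) does not affect this value, so your flexibility there is harmless. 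One small slip in your narrative: the \emph{gain} actually comes from the split of $G_1$ (it adds $\gamma_+$), while the merge of $G_2,G_3$ \emph{costs} $\gamma_-+m\tau$; the net is $\gamma_+-\gamma_--m\tau$, which is positive exactly when $2\tau<\tfrac{2}{m}|\Gamma|_V$.
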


 \begin{cor}\label{represent} 
Assume model (\ref{eq:latent}) holds with  $\Delta(C^*) > 0$. Then
  \begin{equation} \label{correct} 
 B^*= \underset{B  \in \mathcal D}{\argmax} \  \langle \Sigma-\Gamma , B \rangle.\end{equation} 
 \end{cor}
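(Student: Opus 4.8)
\textbf{Proof proposal for Corollary~\ref{represent}.}

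The plan is to reduce the claim to Proposition~\ref{prp:pop_algo} by exploiting the homogeneity of the problem under scaling of $\Gamma$. Observe that for any $B\in\mathcal{D}$ one has $\langle I,B\rangle=\tr(B)=K$, so adding a multiple of the identity to $\Gamma$ only shifts the objective $\langle\Sigma-\Gamma,B\rangle$ by a constant and does not change the argmax. More generally, the strategy is to replace $\Gamma$ by a rescaled diagonal matrix: for $\lambda\in(0,1]$ set $\Gamma_\lambda := \lambda\Gamma + (1-\lambda)\,\mathrm{diag}(\Sigma)$ componentwise — or more simply work with $\Sigma_\lambda := ACA^t + \Gamma_\lambda$ — chosen so that $\Sigma_\lambda$ is still the covariance matrix of a $G^*$-latent model with the \emph{same} latent covariance $C^*$ but with a shrunk diagonal perturbation whose variation seminorm $|\Gamma_\lambda|_V = \lambda|\Gamma|_V$ is as small as we like. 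Since $\Delta(C^*)>0$ is fixed, we can pick $\lambda$ small enough that $\Delta(C^*) > \frac{2}{m}|\Gamma_\lambda|_V = \frac{2\lambda}{m}|\Gamma|_V$, so Proposition~\ref{prp:pop_algo} applies to $\Sigma_\lambda$ and gives $B^* = \argmax_{B\in\mathcal{D}}\langle\Sigma_\lambda,B\rangle = \argmax_{B\in\mathcal{D}}\langle ACA^t + \Gamma_\lambda, B\rangle$.

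Next I would remove the dependence on $\lambda$ and on the ``$\mathrm{diag}$'' correction by the identity trick above. Write $\Gamma_\lambda = \lambda\Gamma + c_\lambda I + D_\lambda$, where $D_\lambda$ is the part of $(1-\lambda)\mathrm{diag}(\Sigma)$ that varies — actually the cleanest route is to choose $\Gamma_\lambda := \lambda\Gamma$ directly and note that $\Sigma_\lambda := ACA^t + \lambda\Gamma$ is the covariance of a valid latent model (since $\lambda\Gamma$ is diagonal with nonnegative entries, constant within groups, and $C^*\succeq 0$). Then for every $B\in\mathcal{D}$,
\[
\langle \Sigma_\lambda, B\rangle = \langle ACA^t, B\rangle + \lambda\langle\Gamma,B\rangle = \langle\Sigma,B\rangle - \langle\Gamma,B\rangle + \lambda\langle\Gamma,B\rangle = \langle\Sigma-\Gamma,B\rangle + \lambda\langle\Gamma,B\rangle.
\]
The residual term $\lambda\langle\Gamma,B\rangle$ is not constant over $\mathcal{D}$, so I cannot conclude immediately; instead I would let $\lambda\to 0^+$. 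Since $B^*=\argmax_{B\in\mathcal{D}}\langle\Sigma_\lambda,B\rangle$ for all sufficiently small $\lambda>0$, and $\langle\Sigma_\lambda,B\rangle\to\langle\Sigma-\Gamma,B\rangle$ uniformly on the compact set $\mathcal{D}$ (the extreme points of $\mathcal{D}$ are the finitely many partition matrices, and $\langle\Gamma,B\rangle$ is bounded there), a standard argmax-stability argument over the finite vertex set of the polytope $\mathcal{D}$ — note that a linear functional on $\mathcal{D}$ attains its max at a vertex, and the vertices are exactly the normalized partition matrices — forces $B^*$ to be a maximizer of the limiting functional $\langle\Sigma-\Gamma,\cdot\rangle$ as well.

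The main obstacle, and the only step requiring genuine care, is the last one: going from ``$B^*$ maximizes $\langle\Sigma_\lambda,\cdot\rangle$ for all small $\lambda>0$'' to ``$B^*$ maximizes $\langle\Sigma-\Gamma,\cdot\rangle$,'' since a priori the maximizer could jump in the limit. This is resolved by observing that the feasible set $\mathcal{D}$ has finitely many extreme points (the partition matrices $B_G$ for partitions $G$ that are coarsenings of the finest admissible partition), so $\max_{B\in\mathcal{D}}\langle M,B\rangle = \max_G\langle M,B_G\rangle$ for any symmetric $M$; the function $\lambda\mapsto\langle\Sigma_\lambda,B_G\rangle$ is affine in $\lambda$ for each fixed $G$, and an upper envelope of finitely many affine functions that equals $\langle\Sigma_0,B_{G^*}\rangle$-value-matching at $\lambda=0^+$ pins down $B_{G^*}=B^*$ as optimal at $\lambda=0$ by continuity. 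Alternatively — and perhaps more elegantly — one avoids the limit entirely: Proposition~\ref{prp:pop_algo} applied with the \emph{shifted} matrix obtained by subtracting $\min_a\Gamma_{aa}\cdot I$ from $\Gamma$ (legal since this shift is a multiple of $I$, hence invisible on $\mathcal{D}$) already reduces $|\Gamma|_V$ to $|\Gamma|_V$ unchanged, so that does not help directly; the scaling argument is genuinely needed. I would therefore present the clean version: fix $\lambda$ small, get $B^*=\argmax\langle ACA^t+\lambda\Gamma,\cdot\rangle$, rewrite as $B^*=\argmax\langle\Sigma-\Gamma,\cdot\rangle + \lambda\langle\Gamma,\cdot\rangle$, and push $\lambda\downarrow 0$ using finiteness of the vertex set of $\mathcal{D}$ to conclude $B^* \in \argmax_{B\in\mathcal{D}}\langle\Sigma-\Gamma,B\rangle$, with uniqueness following from the strict inequality $\Delta(C^*)>0$ exactly as in the proof of Proposition~\ref{prp:pop_algo}.
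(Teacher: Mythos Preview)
Your approach is correct in outline but takes a significant detour compared to the paper, and the detour ultimately loops back to the paper's direct argument anyway.

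The paper does not treat Corollary~\ref{represent} as a consequence of Proposition~\ref{prp:pop_algo} via scaling. Instead it observes that $\Sigma-\Gamma = AC^*A^t$, and computes directly (this is display \eqref{eq:signalpop} in the paper's proof of Proposition~\ref{prp:pop_algoC}):
\[
\langle \Sigma-\Gamma,\,B^*-B\rangle \;=\; \langle AC^*A^t,\,B^*-B\rangle \;=\; \sum_{j\neq k}\Big(\tfrac{C^*_{jj}+C^*_{kk}}{2}-C^*_{jk}\Big)\,|B_{G_jG_k}|_1\ ,
\]
using only $B\mathbf{1}=\mathbf{1}$ and $B_{ab}\geq 0$. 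Since each coefficient is at least $\Delta(C^*)/2>0$, this is strictly positive whenever $\mathrm{supp}(B)\nsubseteq\mathrm{supp}(B^*)$, and Lemma~\ref{lem:support} handles the remaining case $\mathrm{supp}(B)\subseteq\mathrm{supp}(B^*)$. That is the entire proof; there is no limiting procedure.

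Your scaling argument $\Sigma_\lambda=AC^*A^t+\lambda\Gamma$ and $\lambda\downarrow 0$ does yield $B^*\in\argmax$, but as you yourself note, the limit only gives $\langle\Sigma-\Gamma,B^*-B\rangle\geq 0$, not strict inequality: an affine function of $\lambda$ that is positive on $(0,\lambda_0)$ can vanish at $0$. To recover uniqueness you then invoke ``the strict inequality $\Delta(C^*)>0$ exactly as in the proof of Proposition~\ref{prp:pop_algo}'' --- but that proof's relevant ingredient is precisely the computation of $\langle AC^*A^t,B^*-B\rangle$ displayed above. So the scaling machinery adds nothing: once you open the proof of Proposition~\ref{prp:pop_algo} to extract uniqueness, you already have the one-line direct argument and can discard the limit entirely.

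A minor terminological point: you refer to $\mathcal{D}$ as a polytope with partition matrices as its vertices. In fact the constraint $B^2=B$ is not convex; $\mathcal{D}$ is not a polytope but is itself the finite set of normalized partition matrices into $K$ groups (this is the content of the Peng--Wei characterization). This does not damage your argument, since all you use is finiteness of the candidate set, but it is worth stating correctly.
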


Propositions \ref{prp:pop_algo}  and \ref{prp:counter_example} therefore show that the $K$-means algorithm does not have the capability of estimating $B^*$, unless $\Gamma= \gamma I$, whereas  Corollary  \ref{represent} suggests that a correction of the type
 \begin{equation}\label{interim}
\widetilde{B} = \underset{B  \in \mathcal D}{\argmax} \ \left\{ \langle \widehat{ \Sigma}, B \rangle - \langle   \widehat{\Gamma}, B \rangle \right \} .\end{equation} 
might be successful.  In light  of (\ref{KSDP1}),  we can view this correction as a penalization of the $K$-means criterion.

There are two difficulties with this estimation strategy.  The first one regards the construction of the estimator $\widehat{\Gamma}$  of $\Gamma$. Although, superficially, this may appear to be a simple problem, recall that we 
do {\it not}  know the partition $G^*$, in which case the problem would, indeed,  be trivial.  Instead,  we need to estimate $\Gamma$ exactly for the purpose of estimating $G^*$. We show how this vicious circle can be broken in a satisfactory manner in Section~\ref{sec:two_steps}  below. 

The second difficulty regards the optimization problem (\ref{interim}): although the objective function is linear, $\mathcal{D}$ is not convex. Following  Peng and Wei \cite{PengWei07}, we consider its  convex relaxation $\mathcal{C}$ given in (\ref{eq:domain}) above,  in which we  drop the constraint $B^2=B$.
This leads to our proposed 
estimator announced in (\ref{SDP1}), the {\bf PE}nalized {\bf CO}nvex relaxation of {\bf K}-means ({\bf PECOK}) summarized below: \medskip

\centerline{\fbox{  
\begin{minipage}{0.9\textwidth}
{\bf PECOK algorithm}
\begin{align*} 
& \text{Step 1.  Estimate} \  \Gamma \ \text{by} \  \widehat{\Gamma}. \nonumber \\
& \text {Step 2.  Estimate} \ B^* \, \text{ by} \  
\widehat B=\argmax _{B \in \mathcal{C}}\langle \widehat{\Sigma} - \widehat{\Gamma}, B\rangle. \nonumber \\
& \text{Step 3.  Estimate} \ G^* \, \text{ by  applying a clustering algorithm to the columns of }\,   \widehat{B}. \nonumber 
 \end{align*}
\end{minipage}
}}\medskip

\noindent Our only requirement  on the clustering algorithm applied in Step 3  is that it succeeds to recover the partition $G^*$ when applied to $B^*$. The standard $K$-means algorithm \cite{Lloyd} seeded with $K$ distinct centroids, kmeans++ \cite{kmeans++}, or any  approximate $K$-means as defined in (\ref{eq:definition_alpha_approximation}) in Section~\ref{sec:spectral}, fulfill this property. \\

\noindent We also have: 
 \begin{prp}\label{prp:pop_algoC} 
Assume model (\ref{eq:latent}) holds.  
 \begin{equation}\label{condition:popC}
\text{If} \  \ \ \ \Delta(C^*)>   \frac{2}{m}|\Gamma|_{V} , \ \text{ then} \ B^* =  \underset{B  \in \mathcal C}{\argmax} \  \langle {\Sigma} , B \rangle.\
 \end{equation}
In particular, when  $\Delta(C^*) > 0 $, 
 \begin{equation} \label{correctconvex} 
 B^*= \underset{B  \in \mathcal C}{\argmax} \  \langle \Sigma-\Gamma , B \rangle.\end{equation} 
  \end{prp}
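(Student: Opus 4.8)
\textbf{Proof proposal for Proposition~\ref{prp:pop_algoC}.}

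The plan is to deduce Proposition~\ref{prp:pop_algoC} from the already-established results for the non-convex domain $\mathcal{D}$, namely Proposition~\ref{prp:pop_algo} and Corollary~\ref{represent}, together with a convexity/optimality argument that shows the maximum over the larger domain $\mathcal{C}$ is still attained at $B^*$. The key observation is that $\mathcal{D} \subset \mathcal{C}$, since $\mathcal{C}$ is obtained from $\mathcal{D}$ by dropping the idempotency constraint $B^2 = B$; moreover $B^* \in \mathcal{D} \subset \mathcal{C}$. Consequently, for any linear objective $\langle M, \cdot\rangle$, we have $\max_{B\in\mathcal{D}}\langle M,B\rangle \leq \max_{B\in\mathcal{C}}\langle M,B\rangle$. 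So to upgrade the statement ``$B^* = \argmax_{B\in\mathcal{D}}\langle\Sigma,B\rangle$'' (Proposition~\ref{prp:pop_algo}, valid under~\eqref{condition:pop}) to the analogous statement over $\mathcal{C}$, it suffices to show that no $B\in\mathcal{C}$ strictly beats $B^*$, i.e. $\langle\Sigma,B\rangle \leq \langle\Sigma,B^*\rangle$ for all $B\in\mathcal{C}$.

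First I would recall the structure used to prove Proposition~\ref{prp:pop_algo}: writing $\Sigma = AC^*A^t + \Gamma$, one computes $\langle\Sigma,B\rangle = \langle AC^*A^t,B\rangle + \langle\Gamma,B\rangle$ for $B\in\mathcal{C}$, and on $\mathcal{C}$ the diagonal constraint $\sum_a B_{ab}=1$ together with $B\succcurlyeq 0$, $B_{ab}\geq 0$ gives uniform control of $\langle\Gamma,B\rangle$; the trace constraint $\tr(B)=K$ and positive-semidefiniteness bound $\langle AC^*A^t,B\rangle$. The crucial point is that these bounds only use the defining inequalities of $\mathcal{C}$ — positive semidefiniteness, row sums equal to one, nonnegativity, and the trace condition — and do \emph{not} use $B^2=B$. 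Hence the same chain of inequalities establishing that $B^*$ maximizes $\langle\Sigma,\cdot\rangle$ over $\mathcal{D}$ in fact proves it maximizes over all of $\mathcal{C}$, provided the strict separation~\eqref{condition:pop} holds so that $B^*$ is the \emph{unique} maximizer. This yields~\eqref{condition:popC}.

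For the second assertion~\eqref{correctconvex}, valid merely under identifiability $\Delta(C^*)>0$, I would mimic the derivation of Corollary~\ref{represent} from Proposition~\ref{prp:pop_algo}. The point is that replacing $\Sigma$ by $\Sigma - \Gamma = AC^*A^t$ removes the offending $\Gamma$-term entirely, so the relevant quantity to maximize over $\mathcal{C}$ is $\langle AC^*A^t,B\rangle$. Since $C^*$ is positive semidefinite, $AC^*A^t$ has its ``cluster-block'' structure, and one shows $\langle AC^*A^t,B\rangle \le \langle AC^*A^t, B^*\rangle$ for all $B\in\mathcal{C}$ with equality iff $B=B^*$ (again using only the $\mathcal{C}$-constraints, most importantly $\tr(B)=K$ and $B\succcurlyeq 0$, $B\succcurlyeq 0$ forcing the mass to concentrate on the right invariant subspace). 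Formally this is exactly~\eqref{correct} with the domain enlarged to $\mathcal{C}$, and the enlargement is harmless because the upper bound proof never invoked idempotency.

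\textbf{Main obstacle.} The one place requiring genuine care is verifying that the inequality $\langle AC^*A^t, B\rangle \le \langle AC^*A^t, B^*\rangle$ (and likewise for $\Sigma$ under~\eqref{condition:pop}) together with its equality case really is established in the proof of Proposition~\ref{prp:pop_algo}/Corollary~\ref{represent} using \emph{only} the inequality constraints defining $\mathcal{C}$, and not the equality $B^2=B$ anywhere — including in the identification of the equality case, where one must rule out non-idempotent maximizers. If the original argument secretly used $B^2=B$ (e.g. to pass to a partition directly), one would instead argue as follows: the objective is linear and $\mathcal{C}$ is a compact convex polytope-like set (intersection of the PSD cone with affine/linear constraints), so the maximum is attained at an extreme point of $\mathcal{C}$; one would then either invoke that the extreme points of this particular spectrahedron relevant to the bound lie in $\mathcal{D}$, or — more robustly — simply note that since $B^*\in\mathcal{C}$ achieves the value $\max_{\mathcal{D}}\langle\Sigma,\cdot\rangle$ and this value is an upper bound for $\langle\Sigma,B\rangle$ over all $B\in\mathcal{C}$ by the constraint-only estimates, $B^*$ must be optimal over $\mathcal{C}$, with uniqueness inherited from the strictness in~\eqref{condition:pop}. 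I expect the cleanest writeup to explicitly re-run the two or three displayed inequalities from the proof of Proposition~\ref{prp:pop_algo} and observe in each line that $B^2=B$ was never used.
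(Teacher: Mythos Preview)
Your overall strategy is correct and coincides with the paper's: the key inequalities bounding $\langle \Sigma, B^*-B\rangle$ from below use only the constraints defining $\mathcal{C}$ (row-sums equal to one, nonnegativity, positive semidefiniteness, $\tr(B)=K$), never $B^2=B$. In fact the paper runs the logic in the \emph{opposite} direction from yours: it proves Proposition~\ref{prp:pop_algoC} directly over $\mathcal{C}$, and then Proposition~\ref{prp:pop_algo} and Corollary~\ref{represent} follow immediately since $\mathcal{D}\subset\mathcal{C}$. So you cannot really ``deduce from'' Proposition~\ref{prp:pop_algo}---its proof \emph{is} the proof of Proposition~\ref{prp:pop_algoC}. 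This is only a matter of presentation; your substantive observation is the right one.

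There is, however, one genuine gap you flag but do not resolve: the equality case. The paper's decomposition yields
\[
\langle \Sigma, B^*-B\rangle \;\geq\; \sum_{j\neq k}\Big(\tfrac{C_{jj}+C_{kk}}{2}-C_{jk}-\tfrac{|\Gamma|_V}{m}\Big)\,|B_{G_jG_k}|_1\ ,
\]
which is strictly positive under \eqref{condition:pop} \emph{only when} $\mathrm{supp}(B)\nsubseteq\mathrm{supp}(B^*)$. Your ``strictness'' remark covers precisely this case and no more. To rule out competitors $B\in\mathcal{C}$ with $\mathrm{supp}(B)\subset\mathrm{supp}(B^*)$, the paper proves a separate support lemma: any such $B$ is block-diagonal with each block $B_{G_kG_k}$ doubly stochastic and PSD, hence $\tr(B_{G_kG_k})\geq \|B_{G_kG_k}\|_{op}\geq 1^tB_{G_kG_k}1/|G_k|=1$; combined with $\tr(B)=K$ this forces each block to be the rank-one matrix $11^t/|G_k|$, i.e.\ $B=B^*$. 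This short argument is what you need in place of your hedged ``forcing the mass to concentrate on the right invariant subspace.'' Your extreme-point fallback does not work as stated: the extreme points of the spectrahedron $\mathcal{C}$ are not all partition matrices, so you cannot reduce to $\mathcal{D}$ that way.
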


Proposition  \ref{prp:pop_algoC} shows that Proposition \ref{prp:pop_algo}  and Corollary \ref{represent} continue to hold  when  the non-convex set $\mathcal{D}$ is replaced by the convex set $\mathcal{C}$, and we notice that the counterexample of Proposition \ref{prp:counter_example} also continues to be valid. 
On the basis  Proposition  \ref{prp:pop_algoC}, we expect the  PECOK 
estimator  $\widehat{B}$  to recover $B^*$, with high probability, and we show that this is indeed the case in Section \ref{sec:subsect_perfect} below, for  the estimator $\widehat{\Gamma}$ given in the following section.

\subsection{Estimation of $\Gamma$}\label{sec:two_steps}

If the groups  $G_{k}$  of the  partition $G^*$ were  known,  we could  immediately estimate $\widehat\Gamma$ by a method of moments, and obtain an estimator  with  $\sqrt{\log(p)/(nm)}$ rate  with respect to the $\ell^\infty$ norm.  If the  groups are not known,  estimation of $\Gamma$ is  still possible, but one needs to pay a price in terms of precision. Fortunately, as explained in (\ref{eq:blabla}) below, we only need to estimate $\Gamma$ at a $\sqrt{\log(p)/n}$ rate. Specifically,  in this section, our goal is to build an estimator $\widehat \Gamma$ of $\Gamma$ which fulfills $ |\widehat\Gamma - \Gamma|_{\infty}\lesssim |\Gamma|_\infty\sqrt{\log(p)/ n}$,  with high probability. In Appendix \ref{app:gamma}  we show the construction of an intuitive estimator of $\Gamma$,  for which $ |\widehat\Gamma - \Gamma|_{\infty}\lesssim |\Sigma|_\infty^{1/2}|\Gamma|_{\infty}^{1/2}\sqrt{\log(p)/ n}$, which is  not fully satisfactory, as it would suggest that the precision of an  estimate of  $\Gamma$ depends on the size of parameters not in $\Gamma$. In order to correct this and achieve our goal we need the  somewhat subtler estimator $\widehat \Gamma$ constructed below. For any $a,b\in [p]$, define 
\beq \label{eq:definition_V}
V(a,b):=   \max_{c,d \in [p]\setminus\{a,b\}}\big|\langle \bX_{:a}-\bX_{:b}, \frac{\bX_{:c}-\bX_{:d}}{|\bX_{:c}-\bX_{:d}|_2}\rangle\big| \ ,
\eeq
with the convention $0/0=0$.  Guided by the block structure of $\Sigma$, we define 
\[ne_1(a):= \argmin_{b\in [p]\setminus\{a\}}V(a,b)\quad \text{ and }\quad ne_2(a):= \argmin_{b\in [p]\setminus\{a,ne_1(a)\}}V(a,b) ,\]
to be two  ``neighbors'' of $a$, that is  two  indices  $b_1 = ne_1(a)$ and $b_2 = ne_2(a)$ 
such that the covariance $\langle \bX_{:b_i} ,\bX_{:c}-\bX_{:d}\rangle$, $i =1,2$,  is most similar to $\langle \bX_{:a} ,\bX_{:c}-\bX_{:d}\rangle$, for all variables $c$ and $d$ not equal to $a$ or $b_{i}$, $i = 1,2$.  It is expected that $ne_1(a)$ and $ne_2(a)$ belong to the same group as $a$, or at least that $(\bZ A^t)_{:a}-(\bZ A^t)_{:ne_i(a)}$ is small.
Then, the estimator  $\widehat\Gamma$, which is a diagonal matrix,  is defined by 
\beq\label{eq:estim:gamma2}
\widehat\Gamma_{aa}= {1 \over n}\langle\bX_{:a}-\bX_{:ne_{1}(a)}, \bX_{:a}-\bX_{:ne_{2}(a)}\rangle,
\quad \text{ for $a=1,\ldots, p$.}
\eeq 
The  population version of the quantity above is of order 
$$\Gamma_{aa}+ C^*_{k(a)k(a)}+C^*_{k(ne_{1}(a))k(ne_{2}(a))}-C^*_{k(a)k(ne_{1}(a))}-C^*_{k(a)k(ne_{2}(a))}\ ,$$
where $k(b)$ stands for the group of $b$. It should therefore be of order $\Gamma_{aa}$ if the above intuition holds.  Proposition \ref{prp:control_u_gamma2} below shows that  this is indeed the case.

 \begin{prp}\label{prp:control_u_gamma2}
 There exist three numerical constants $c_1$--$c_3$  such that the following holds. Assume that $m\geq 3$ and that $\log(p)\leq c_1 n$. 
 With probability larger than $1-c_{3}/p$, the estimator $\widehat\Gamma$ defined by (\ref{eq:estim:gamma2}) satisfies
 \beq\label{eq:control_check_gamma2}
|\widehat\Gamma - \Gamma|_{V}\leq 2 |\widehat\Gamma - \Gamma|_{\infty}\leq c_2 |\Gamma|_\infty\sqrt{\frac{\log(p)}{n}}\ .
 \eeq
 \end{prp}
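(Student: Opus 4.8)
# Proof Proposal for Proposition \ref{prp:control_u_gamma2}

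\textbf{Overall strategy.} The plan is to express $\widehat\Gamma_{aa} - \Gamma_{aa}$ as a population target plus stochastic fluctuations, then control each piece uniformly over $a$. Writing $X_a = Z_{k(a)} + E_a$ and using the sample counterparts, observe that
\[
n\widehat\Gamma_{aa} = \langle \bX_{:a} - \bX_{:ne_1(a)},\, \bX_{:a} - \bX_{:ne_2(a)}\rangle
= \langle \bE_{:a} + \bD_1,\, \bE_{:a} + \bD_2\rangle,
\]
where $\bD_i := (\bZ A^t)_{:a} - (\bZ A^t)_{:ne_i(a)} - (\bE_{:ne_i(a)})$ collects the latent-difference term $(\bZ A^t)_{:a} - (\bZ A^t)_{:ne_i(a)}$ and the error term at the neighbor. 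Expanding, $n\widehat\Gamma_{aa} = |\bE_{:a}|_2^2 + \langle \bE_{:a}, \bD_1 + \bD_2\rangle + \langle \bD_1, \bD_2\rangle$. The first term concentrates around $n\Gamma_{aa}$ by a standard $\chi^2$-type bound, uniformly over $a$ with a $\sqrt{\log p/n}$ deviation after multiplying by $|\Gamma|_\infty$. The remaining two terms must be shown to be $O(|\Gamma|_\infty\sqrt{n\log p})$, which is where the choice of neighbors via $V(a,b)$ enters crucially.

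\textbf{Key steps, in order.} First I would establish, on a good event of probability $1-c/p$, that for \emph{every} $a$ the quantity $V(a, ne_i(a))$ is small — of order $|\Gamma|_\infty\sqrt{n\log p}$. The point is that $a$ always has at least two genuine same-group partners (since $m\geq 3$), and for such a partner $b$ one has $(\bZ A^t)_{:a} = (\bZ A^t)_{:b}$, so $V(a,b)$ reduces to $\max_{c,d}|\langle \bE_{:a} - \bE_{:b}, (\bX_{:c}-\bX_{:d})/|\bX_{:c}-\bX_{:d}|_2\rangle|$, which is a maximum of $O(p^2)$ sub-exponential variables each with variance proxy $\lesssim n|\Gamma|_\infty$, hence $\lesssim |\Gamma|_\infty\sqrt{n\log p}$ uniformly. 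Since $ne_1(a), ne_2(a)$ minimize $V(a,\cdot)$, the \emph{selected} neighbors inherit this bound: $V(a, ne_i(a)) \leq |\Gamma|_\infty\sqrt{n\log p}$ for all $a$, on the good event.

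Second, I would translate the bound on $V(a, ne_i(a))$ into a bound on the cross terms. The definition of $V$ controls $|\langle \bX_{:a} - \bX_{:ne_i(a)},\, (\bX_{:c}-\bX_{:d})/|\bX_{:c}-\bX_{:d}|_2\rangle|$ for all $c,d\neq a, ne_i(a)$; by a suitable choice of test directions (and handling the diagonal/excluded indices separately via a short direct sub-exponential argument) this yields control on $\langle \bX_{:a} - \bX_{:ne_1(a)}, \bX_{:a} - \bX_{:ne_2(a)}\rangle - |\bE_{:a}|_2^2$ — essentially because $\bX_{:a} - \bX_{:ne_2(a)}$ is itself (close to) a vector of the form $\bE_{:a} - \bE_{:ne_2(a)}$ plus a latent difference, and pairing it against $\bX_{:a} - \bX_{:ne_1(a)}$ is exactly the kind of inner product $V$ bounds, up to the normalization factor $|\bX_{:c}-\bX_{:d}|_2 \asymp \sqrt{n|\Gamma|_\infty}$ which I would pin down with a two-sided concentration bound on these norms. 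Combining with Step~1 of the argument, $|n\widehat\Gamma_{aa} - |\bE_{:a}|_2^2| \lesssim |\Gamma|_\infty\sqrt{n\log p}$ uniformly in $a$.

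Third, I would add the concentration of $|\bE_{:a}|_2^2/n$ around $\Gamma_{aa}$: a Bernstein/Laurent–Massart bound gives $\big||\bE_{:a}|_2^2/n - \Gamma_{aa}\big| \lesssim \Gamma_{aa}\sqrt{\log p/n} \leq |\Gamma|_\infty\sqrt{\log p/n}$ simultaneously over all $p$ coordinates, on an event of probability $1-c/p$. Intersecting the two good events and dividing by $n$ yields $|\widehat\Gamma - \Gamma|_\infty \lesssim |\Gamma|_\infty\sqrt{\log p/n}$; the bound $|\widehat\Gamma - \Gamma|_V \leq 2|\widehat\Gamma - \Gamma|_\infty$ is immediate from the definition of the variation semi-norm (difference of a max and a min, each within $|\widehat\Gamma - \Gamma|_\infty$ of the corresponding entry of $\Gamma$... actually just from the triangle inequality applied entrywise). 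The condition $\log p \leq c_1 n$ is used to keep the sub-exponential deviations in their sub-Gaussian regime.

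\textbf{Main obstacle.} The delicate point is Step~1 together with Step~2: showing that \emph{minimizing} $V(a,\cdot)$ produces neighbors good enough that the latent-difference contamination $(\bZ A^t)_{:a} - (\bZ A^t)_{:ne_i(a)}$ — which need \emph{not} vanish if $ne_i(a)$ lands in a different group — is nonetheless negligible in the final inner product. The argument is not that the selected neighbors are in the correct group, but that $V$ being small forces the \emph{relevant projections} of the latent difference to be small; making this rigorous requires carefully choosing which directions $(\bX_{:c}-\bX_{:d})/|\bX_{:c}-\bX_{:d}|_2$ to test against (in particular using other pairs within $a$'s group as probes for the latent component) and controlling the interaction with the $\bE$ terms, all uniformly over the $O(p)$ choices of $a$ and $O(p^2)$ choices of probes. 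Everything else is routine Gaussian concentration and union bounds.
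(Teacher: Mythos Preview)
Your overall architecture matches the paper's: expand $\widehat\Gamma_{aa}$, handle the pure-$\bE$ pieces by standard concentration (your Step~3), and exploit the minimality of $V(a,\cdot)$ together with the existence of two genuine same-group partners (from $m\geq 3$) to force $V(a,ne_i(a))\lesssim |\Gamma|_\infty^{1/2}\sqrt{\log p}$ (your Step~1; note the variance of $\langle \bE_{:a}-\bE_{:b}, u\rangle$ for a unit vector $u$ is $\leq 2|\Gamma|_\infty$, not $n|\Gamma|_\infty$).

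The gap is in Step~2. Writing $t_i:=\bZ_{:k(a)}-\bZ_{:k(ne_i(a))}$ for the latent bias, your expansion contains $\langle t_1,t_2\rangle$ and terms like $\langle t_i,\bE_{:b}\rangle$; to reach $|\Gamma|_\infty\sqrt{n\log p}$ you must show $|t_i|_2^2\lesssim |\Gamma|_\infty\sqrt{n\log p}$. Your proposed route --- pair $\bX_{:a}-\bX_{:ne_2(a)}$ against $\bX_{:a}-\bX_{:ne_1(a)}$ via the $V$-inequality, using the normalization $|\bX_{:c}-\bX_{:d}|_2\asymp\sqrt{n|\Gamma|_\infty}$ --- breaks down for two reasons. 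First, the index $a$ is excluded from the maximum defining $V(a,ne_1(a))$; replacing it by some $c\in G_{k(a)}$ yields an inner product whose leading term is $\langle t_1,t_2\rangle$, which can be small even when $|t_1|_2,|t_2|_2$ are large. Second, and more fundamentally, the claim $|\bX_{:c}-\bX_{:d}|_2\asymp\sqrt{n|\Gamma|_\infty}$ fails exactly when the bias is large, since this norm itself contains a $|t|_2$ contribution; treating it as a fixed constant is circular. Probing with both $c,d$ inside $a$'s group (as you suggest in the last paragraph) gives $\bX_{:c}-\bX_{:d}=\bE_{:c}-\bE_{:d}$, which carries no information about $t_i$ at all.

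The paper's device is to take $c\in G_{k(a)}\setminus\{a\}$ and $d\in G_{k(ne_i(a))}\setminus\{ne_i(a)\}$ (both exist since $m\geq 3$), so that $\bX_{:c}-\bX_{:d}=t_i+(\bE_{:c}-\bE_{:d})$ carries the \emph{same} latent difference as $\bX_{:a}-\bX_{:ne_i(a)}$. The inner product $\langle \bX_{:a}-\bX_{:ne_i(a)},\,\bX_{:c}-\bX_{:d}\rangle$ then has leading term $|t_i|_2^2$, while the $V$-bound caps it by $c|\Gamma|_\infty^{1/2}\sqrt{\log p}\,\big(|t_i|_2+c'|\Gamma|_\infty^{1/2}\sqrt{n}\big)$. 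Solving this quadratic inequality in $|t_i|_2$ gives $|t_i|_2^2\lesssim |\Gamma|_\infty\sqrt{n\log p}$. With $|t_i|_2^2$ controlled in this way, the remaining pieces of your decomposition go through immediately.
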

We remark on the fact that, even though the above proposition does not make any separation assumption between the clusters, we are still able to estimate the diagonal entries $\Gamma_{aa}$ at rate $\sqrt{\log(p)/n}$ in $\ell^\infty$ norm.

\subsection{ Perfect clustering with PECOK }\label{sec:subsect_perfect}

Whereas Lemma \ref{lem:identifiable} above guarantees that $B^*$ is identifiable when $\Delta(C^*) > 0$, a larger cluster separation 
level is needed for estimating $B^*$ consistently from noisy observations. 
 Theorem \ref{thm:consistency} below   shows that  $\widehat{B} = B^*$ with high probability whenever  $\Delta(C^*)$ is larger than the sum between $ |\widehat\Gamma -\Gamma|_{V}/m$ and terms  accounting for the variability of $\widehat\Sigma$, which are the dominant terms if $\widehat{\Gamma}$ is constructed as in (\ref{eq:estim:gamma2}) above. 
\begin{thm}\label{thm:consistency}
There exist $c_1,\ldots, c_3$ three positive constants such that the following holds. 
Let $\widehat\Gamma$ be any estimator of $\Gamma$, such that $|\widehat\Gamma-\Gamma|_{V}\leq \delta_{n,p}$ with probability $1-c_{3}/(2p)$. Then, assuming that $\log(p)\leq c_1 n$, 
and that 
\begin{equation}\label{eq:assumption2}
 \Delta(C^*) \geq c_2 \left[|\Gamma|_{\infty}\left\{\sqrt{  \frac{\log p}{mn}   }+    \sqrt{\frac{p}{nm^2}} + \frac{\log(p)}{n}+ \frac{p}{nm}\right\} +   \frac{\delta_{n,p}}{m} \right]\ ,
\end{equation}
we have
$\widehat{B} = B^*$, with probability higher than $1 - c_3/p$.
\end{thm}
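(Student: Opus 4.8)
# Proof proposal for Theorem~\ref{thm:consistency}

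\textbf{Strategy: a dual-certificate / optimality argument for the SDP.} The plan is to show that $B^*$ is the unique maximizer of $B\mapsto \langle \widehat\Sigma-\widehat\Gamma,B\rangle$ over $\mathcal C$ by exhibiting a dual certificate, i.e.\ by checking the KKT conditions for the SDP directly. Write $\widehat\Sigma-\widehat\Gamma = (\Sigma-\Gamma) + (\widehat\Sigma-\Sigma) + (\Gamma-\widehat\Gamma) = AC^*A^t + R$, where $R := (\widehat\Sigma-\Sigma) - (\widehat\Gamma-\Gamma)$ is the ``noise'' matrix. From Proposition~\ref{prp:pop_algoC} we know $B^*$ maximizes $\langle \Sigma-\Gamma,B\rangle = \langle AC^*A^t,B\rangle$ over $\mathcal C$ whenever $\Delta(C^*)>0$, and inspecting that proof should yield a quantitative strong-duality statement: there exist dual variables (a symmetric PSD slack $H^*\succcurlyeq 0$, a vector $\alpha^*$ for the row-sum constraints, a scalar for the trace constraint, and nonnegative multipliers for $B_{ab}\ge 0$) such that $AC^*A^t$ equals the sum of those dual terms minus $H^*$, with $H^*$ having a spectral gap of order $\Delta(C^*)$ transverse to the ``block'' subspace spanned by the columns of $A$. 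The key point is that $H^*\succcurlyeq 0$ with $\lambda_{\min}$ on the relevant complement $\gtrsim \Delta(C^*)$, and the complementarity slackness with $B^*$ is exact.

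\textbf{Perturbing the certificate.} The next step is to perturb this certificate to handle $R$. One builds a candidate dual solution for the perturbed problem by adjusting $\alpha^*$ (and the trace multiplier, and the inequality multipliers) to absorb the diagonal and row-sum components of $R$, leaving a corrected PSD slack $\widehat H = H^* + (\text{correction from }R)$. For $B^*$ to remain optimal one then needs (i) $\widehat H\succcurlyeq 0$, which follows if $\|P(R - \text{adjustments})P\|_{op}$ on the complement of $\mathrm{span}(A)$ is smaller than $\lambda_{\min}(H^*|_{\perp})\asymp\Delta(C^*)$ up to constants, and (ii) the perturbed inequality multipliers stay nonnegative, which needs an entrywise/row-wise bound on $R$ of the same order after rescaling by the block sizes (this is where the $1/m$ and $\sqrt{p}/m$, $p/m$ factors enter — a row of $B^*$ has $\ell_1$ norm $1$ but is spread over $|G_k|\ge m$ coordinates, so $\langle R, B^*-B\rangle$-type quantities get divided by $m$). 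Uniqueness comes from strict complementarity being preserved, i.e.\ the inequalities remain strict where they were strict and $\widehat H$ retains the correct rank/kernel.

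\textbf{The stochastic bounds.} The remaining work is to bound the two kinds of functionals of $R = (\widehat\Sigma-\Sigma)-(\widehat\Gamma-\Gamma)$ that appear: an operator-norm-type control of a suitably projected/centered version of $\widehat\Sigma-\Sigma$, and entrywise/row-sum control. Since $X\sim N(0,\Sigma)$ and $\widehat\Sigma = \frac1n\bX^t\bX$, the matrix $\widehat\Sigma-\Sigma$ is a centered sample covariance; using that $\Sigma = AC^*A^t+\Gamma$ has operator norm controlled by $\|C^*\|_{op}$ plus $|\Gamma|_\infty$, concentration of Gaussian sample covariances (Bernstein-type / Davidson–Szarek) gives, on the relevant low-rank-plus-diagonal pieces, bounds scaling like $|\Gamma|_\infty(\sqrt{\log p / (mn)} + \sqrt{p/(nm^2)} + \log p/n + p/(nm))$ — the $\sqrt{p}$ and $p$ terms coming from the operator norm of a $p\times p$ (or $\sim p\times K$) Wishart-type block, the $\log p$ terms from entrywise maxima over $p^2$ entries, and the $1/m$ factors from averaging over blocks. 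The term $\delta_{n,p}/m$ is exactly the contribution of $\widehat\Gamma-\Gamma$ to $R$, controlled via $|\widehat\Gamma-\Gamma|_V\le\delta_{n,p}$ by hypothesis (diagonal perturbations only affect the SDP through the trace and, via $B^*$'s block structure, get the $1/m$). One also needs to handle the cross terms $A(\cdot)E^t$-type contributions mentioned in the introduction; here one conditions on $\bZ$ (or uses that $\bX_{:a} = (\bZ A^t)_{:a} + \bE_{:a}$) and controls the bilinear forms by standard Gaussian chaos / Hanson–Wright arguments. Collecting everything, one checks that under \eqref{eq:assumption2} (with $c_2$ large enough relative to the absolute constants from these concentration steps, and noting $\|C^*\|_{op}$, $\max_k C^*_{kk}$ can be absorbed since by \eqref{equiv} the interesting regime is $\Delta(C^*)$ small, or else the problem is trivial) the perturbed certificate is valid on an event of probability $\ge 1-c_3/p$.

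\textbf{Main obstacle.} The hard part will be step two: producing the \emph{quantitative} strong-duality certificate for the population problem (with an explicit spectral gap $\asymp\Delta(C^*)$ in $H^*$ on the complement of the block subspace, and explicit strictness margins in the nonnegativity multipliers scaling like $\Delta(C^*)$ and $\Delta(C^*)/m$ respectively), and then showing this margin structure is robust to the \emph{correlated, non-additive} perturbation $R$ — in particular that the corrections needed to restore the row-sum and nonnegativity constraints do not eat into the PSD margin. Designing the dual adjustments so that the operator-norm requirement and the entrywise requirement on $R$ are simultaneously of the order claimed in \eqref{eq:assumption2} (rather than, say, a worse $\sqrt{p/n}$ without the $1/m$) is the delicate accounting, and is presumably where the bulk of the paper's proof of Theorem~\ref{thm:consistency} is spent.
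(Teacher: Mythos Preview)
Your proposal takes a genuinely different route from the paper, and it contains a real gap.

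\textbf{Different approach.} The paper does \emph{not} use a dual certificate. It runs a direct primal comparison: by Lemma~\ref{lem:support}, it suffices to show $\langle \widehat\Sigma-\widehat\Gamma, B^*-B\rangle>0$ for every $B\in\mathcal C$ with $\mathrm{supp}(B)\nsubseteq\mathrm{supp}(B^*)$. The central trick is to exploit the row-sum constraint $B\mathbf 1=\mathbf 1$: any matrix of the form $\mathbf 1 v^t+v\mathbf 1^t$ satisfies $\langle \mathbf 1 v^t+v\mathbf 1^t, B^*-B\rangle=0$, so one may freely subtract such terms from $n(\widehat\Sigma-\widehat\Gamma)$. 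Doing so with $v$ built from the diagonal of $A\bZ^t\bZ A^t$ and of $A\bZ^t\bE$ yields a matrix $W=W_1+W_2+n(\widetilde\Gamma-\widehat\Gamma)$ where $(W_1)_{ab}=-\tfrac12|\bZ_{:j}-\bZ_{:k}|_2^2$ is the signal (with $a\in G_j$, $b\in G_k$), $(W_2)_{ab}=\langle \bE_{:b}-\bE_{:a},\bZ_{:j}-\bZ_{:k}\rangle$ is the cross term, and $\widetilde\Gamma=\bE^t\bE/n$. Crucially, the cross term is bounded \emph{entrywise} by $c\sqrt{\log p}\,|\Gamma|_\infty^{1/2}\,|\bZ_{:j}-\bZ_{:k}|_2$, i.e.\ by $\sqrt{\text{noise}\times\text{signal}}$, which is then absorbed into the signal term $\tfrac12|\bZ_{:j}-\bZ_{:k}|_2^2$ by the elementary inequality $2xy\le x^2+y^2$. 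The remaining noise term $\langle \widetilde\Gamma-\widehat\Gamma,B^*-B\rangle$ involves only $\bE$ and $\widehat\Gamma$, so its size is governed by $|\Gamma|_\infty$ and $|\widehat\Gamma-\Gamma|_V$ alone. All pieces are then collected against the common factor $\sum_{j\neq k}|B_{G_jG_k}|_1$.

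\textbf{The gap in your argument.} Your final bound must not depend on $\|C^*\|_{op}$ or $|\Sigma|_\infty$---the statement involves only $|\Gamma|_\infty$. Your proposed justification, ``$\|C^*\|_{op}$, $\max_k C^*_{kk}$ can be absorbed since by \eqref{equiv} the interesting regime is $\Delta(C^*)$ small,'' is incorrect: \eqref{equiv} says nothing about the size of $\|C^*\|_{op}$, which can be arbitrarily large while $\Delta(C^*)$ is small (take $C^*=\alpha J+\tau I$ with $\alpha$ huge, $\tau$ tiny). A naive operator-norm bound on $R=\widehat\Sigma-\Sigma-(\widehat\Gamma-\Gamma)$ would scale with $\|\Sigma\|_{op}$, which is the wrong answer. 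In a dual-certificate framework the mechanism that removes this dependence is \emph{not} that $\Delta(C^*)$ is small; it is that the row-sum dual variable $\alpha$ can absorb all $\mathbf 1 v^t$ components of both the signal $AC^*A^t$ and the cross terms $A\bZ^t\bE/n$, after which what remains is governed by $|\Gamma|_\infty$ and by quantities of size $|\bZ_{:j}-\bZ_{:k}|$ that pair with the signal. You would need to make this explicit, and in particular to recognize that the cross term is not a small additive perturbation but a multiplicative one (order $\sqrt{\text{signal}}$), which is exactly why the paper's primal decomposition is so clean. Without that identification your perturbed-certificate step (i) will fail: the PSD margin you claim is $\asymp\Delta(C^*)$, but the operator-norm perturbation you would need to control is $\asymp\|\Sigma\|_{op}\sqrt{p/n}$ in general.
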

 
We stress once again the fact that exact partition recovery  is 
 crucially dependent on the quality of the estimation of $\Gamma$. To make this  fact as transparent as possible, we discuss below condition (\ref{eq:assumption2}) 
in a simplified setting. The same reasoning applies in general.  When all groups $G_k$ have equal size,  so that $p=mK$ (or more generally when $p\approx mK$),  and when the number $K$ of groups is smaller than $\log(p)$, Condition  \eqref{eq:assumption2} simplifies to 
 \begin{equation}\label{eq:condition_Ckk}
  \Delta(C^*) \gtrsim |\Gamma|_{\infty}\left[\sqrt{  \frac{\log p}{mn}   }+     \frac{\log(p)}{n}\right]+ \frac{\delta_{n,p}}{m} \ .
 \end{equation}
 The first term in the right-hand side of (\ref{eq:condition_Ckk}) is of order $\sqrt{  \tfrac{\log p}{mn}   }+     \tfrac{\log(p)}{n}$. It is  shown to be minimax optimal in Theorem \ref{prp:minimax_lower_bound}  of Section \ref{sec:minimax} below.   The order of magnitude of the second term,  ${\delta_{np}}/{m}$,  depends on the size  of $|\widehat\Gamma-\Gamma|_{V}$ and can become the dominant term for poor estimates of $\Gamma$. We showcase below two cases of interest.\medskip
 
\noindent{\bf  Suboptimal cluster recovery with an uncorrected  convex relaxation of $K$-means:  $\widehat\Gamma=0$.}  Theorem \ref{thm:consistency} shows that  if we took   $\widehat\Gamma=0$ in the definition of our estimator (\ref{SDP1}),  we could only guarantee recovery of clusters with a relatively large separation. Specifically, 
 when  $\widehat\Gamma=0$,  then $\delta_{n,p} \approx |\Gamma|_{V}/m$ and   (\ref{eq:condition_Ckk})  becomes 
 $\Delta(C^*)\gtrsim |\Gamma|_{V}/m$, when $m < n$, which would be strongly sub-optimal relative to    the minimax optimal separation rate of Theorem  \ref{prp:minimax_lower_bound} below. We note that the  corresponding unpenalized  estimator 
\[ \widehat B_1=\argmax _{B \in \mathcal{C}}\langle \widehat{\Sigma},  B\rangle  \]
is a convex relaxation of $K$-means, and  would still be  computationally feasible, but not statistically optimal for variable clustering in $G$-models. 
 
  \medskip 
  
 
\noindent{\bf Optimal cluster  recovery with a  penalized  convex relaxation of $K$-means, when  $|\widehat\Gamma-\Gamma|_{\infty}\lesssim |\Gamma|_{\infty} \sqrt{\log(p)/n}$.}  The issues raised above can be addressed by using  the corrected estimator  PECOK  corresponding to  an estimator $\widehat\Gamma$ for which  $|\widehat\Gamma-\Gamma|_{\infty}\lesssim |\Gamma|_{\infty} \sqrt{\log(p)/n}$, such as the estimator  given in  (\ref{eq:estim:gamma2}) above. 
 Then,  as desired, 
 the second term of (\ref{eq:condition_Ckk}) becomes  small  relative to  the first term of (\ref{eq:condition_Ckk}): 
\beq\label{eq:blabla}
{\delta_{np}\over m} \lesssim |\Gamma|_{\infty} \sqrt{\log(p)\over nm^2} \leq {|\Gamma|_{\infty}\over \sqrt{m}}\left[\sqrt{  \frac{\log p}{mn}   }+     \frac{\log(p)}{n}\right], 
\eeq
 since $|D|_{V}\leq 2 |D|_{\infty}$.  \\
  
With these ingredients, we can then show that  the PECOK estimator corresponding  to  $\widehat \Gamma$ defined by (\ref{eq:estim:gamma2})  recovers the true partition, at a near-minimax optimal separation rate. 

\begin{cor}\label{cor:consistency_2_steps2}
There exist $c_1, c_2,  c_3$ three positive constants such that the following holds. 
Assuming that   $\widehat \Gamma$ is defined by (\ref{eq:estim:gamma2}), $\log(p)\leq c_1 n$, 
and that 
\begin{equation}\label{eq:assumption2_2steps2}
 \Delta(C^*) \geq c_2 |\Gamma|_{\infty}\left\{\sqrt{  \frac{\log p}{mn}   }+    \sqrt{\frac{p}{nm^2}} + \frac{\log(p)}{n}+ \frac{p}{nm}\right\}\ ,
\end{equation}
then 
$\widehat{B} = B^*$, with probability higher than $1 - c_3/p$. Moreover, $\widehat{G} = G^*$, for $\widehat{G}$ given  by Step 3 of the PECOK algorithm, with probability higher than $1 - c_3/p$.
\end{cor}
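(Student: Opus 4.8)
The plan is to obtain the corollary by simply feeding the estimator of (\ref{eq:estim:gamma2}) into Theorem~\ref{thm:consistency} and then invoking the defining property of the Step~3 clustering routine. First I would set $\delta_{n,p}:= c\,|\Gamma|_{\infty}\sqrt{\log(p)/n}$ with $c$ the constant from Proposition~\ref{prp:control_u_gamma2}. That proposition, which needs only $m\geq 3$ and $\log(p)\leq c_1 n$ and no separation assumption, guarantees $|\widehat\Gamma-\Gamma|_{V}\leq 2|\widehat\Gamma-\Gamma|_{\infty}\leq \delta_{n,p}$ on an event of probability at least $1-c_3/p$; after relabelling the numerical constants (enlarging $c_3$) this is exactly the hypothesis ``$|\widehat\Gamma-\Gamma|_{V}\leq \delta_{n,p}$ with probability $1-c_3/(2p)$'' required by Theorem~\ref{thm:consistency}.

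Second, I would check that (\ref{eq:assumption2_2steps2}) implies (\ref{eq:assumption2}) for this $\delta_{n,p}$. This is the computation already flagged in (\ref{eq:blabla}): since $m\geq 1$,
\[
\frac{\delta_{n,p}}{m}= c\,|\Gamma|_{\infty}\sqrt{\frac{\log(p)}{nm^2}}\ \leq\ c\,|\Gamma|_{\infty}\sqrt{\frac{\log p}{nm}},
\]
so the extra term $\delta_{n,p}/m$ appearing in (\ref{eq:assumption2}) is dominated, up to a multiplicative constant, by the first term of (\ref{eq:assumption2_2steps2}). Hence, choosing $c_2$ in the corollary large enough relative to the constant of Theorem~\ref{thm:consistency}, condition (\ref{eq:assumption2_2steps2}) entails condition (\ref{eq:assumption2}). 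Theorem~\ref{thm:consistency} then gives $\widehat B=B^*$ on the intersection of the good event for $\widehat\Gamma$ and the $\widehat\Sigma$-concentration event used in its proof, and a union bound keeps the total failure probability at most $c_3/p$ after one final adjustment of constants.

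Third and last, I would invoke the standing requirement on the Step~3 algorithm, namely that it recovers $G^*$ exactly when applied to the columns of $B^*$ (as satisfied by $K$-means with $K$ distinct seeds, kmeans++, or any approximate $K$-means). On the event $\{\widehat B=B^*\}$ the algorithm is handed precisely the columns of $B^*$, hence returns $\widehat G=G^*$, and the probability bound is inherited from the previous step. I do not anticipate any genuine obstacle: the argument is pure bookkeeping. The only points needing a little care are reconciling the three families of constants $c_1,c_2,c_3$ coming from Theorem~\ref{thm:consistency} and Proposition~\ref{prp:control_u_gamma2} so that all invoked statements hold simultaneously and the failure probabilities add correctly, and noting that the $m\geq 3$ hypothesis of Proposition~\ref{prp:control_u_gamma2} is implicitly in force here (the identifiability assumption already requires $m>1$, and the borderline case $m=2$ can be absorbed into the constants or simply excluded).
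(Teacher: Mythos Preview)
Your proposal is correct and follows essentially the same approach as the paper: combine Proposition~\ref{prp:control_u_gamma2} with Theorem~\ref{thm:consistency} via the absorption argument in \eqref{eq:blabla}, then invoke the defining property of the Step~3 algorithm. You spell out the bookkeeping (choice of $\delta_{n,p}$, union bound on failure events, reconciling constants) more explicitly than the paper, and you correctly flag the $m\geq 3$ hypothesis of Proposition~\ref{prp:control_u_gamma2} that the paper silently assumes.
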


\section{Adaptation to the number of groups}\label{sec:adaptaption}

In the previous section, we assumed that the number $K$ of groups is known in advance.  In many situations, however, $K$ is not known, and we address this situation here.  The  information  on $K$ was used to build our estimator  $\widehat B$, via the constraint $\tr(B)=K$ present in the definition of $\mathcal{C}$ given in \eqref{eq:domain}.  When $K$ is not known, we drop this constraint from the definition of $\mathcal{C}$, and instead penalize the scalar product $\langle \widehat\Sigma - \widehat{\Gamma}  , B\rangle$ by the trace of $B$.
Specifically, we define the adaptive estimator $\widehat B_{adapt}$ given by
 \beq\label{eq:estimator_penalized}
\widehat{B}_{adapt} :=  \underset{B  \in \mathcal{C}_0}{\argmax} \  \langle \widehat{\Sigma}- \widehat{\Gamma} , B \rangle - \widehat{\kappa} \tr(B) \ .
 \eeq
where
\beq\label{definition_C0}
\mathcal{C}_0:=\left\{ B \in \bbR^{p\times p}:
                \begin{array}{ll}
                  \bullet\ B \text{ is in } \mathcal S^+ = \{\text{symmetric and positive semidefinite}\} \\
                  \bullet\  \sum_a B_{ab} = 1,\ \forall b\\
		\bullet\ B_{ab}\geq 0,\ \forall a,b\\
                \end{array}
              \right\},
\eeq
and $\widehat \kappa$ is a data-driven tuning parameter.
The following  theorem gives conditions on $\widehat \kappa$, $\widehat \Gamma$ and $\Delta(C^*)$ which ensure exact recovery of $B^*$.

 \begin{thm}\label{thm:Kselection}
There exist $c_1, c_2, c_3$ three positive constants such that the following holds. 
Let $\widehat\Gamma$ be any estimator of $\Gamma$, such that $|\widehat\Gamma-\Gamma|_{V}\leq \delta_{n,p}$ with probability $1-c_{3}/(3p)$. Then, assuming that $\log(p)\leq c_1 n$, 
and that 
\begin{equation}\label{eq:assumption2-bis}
 \Delta(C^*) \geq c_2 \left[|\Gamma|_{\infty}\left\{\sqrt{  \frac{\log p}{mn}   }+    \sqrt{\frac{p}{nm^2}} + \frac{\log(p)}{n}+ \frac{p}{nm}\right\} +   \frac{\delta_{n,p}}{m} \right]
\end{equation}
and that, with probability larger than $1-c_3/(3p)$
 \begin{eqnarray} \label{eq:condition_widehat_kappa}
   4|\Gamma|_{\infty}\left(\sqrt{\frac{p}{n}}+ \frac{p}{n}\right) + \delta_{n,p} < \widehat{\kappa} < \frac{m}{8}\Delta(C^*) 
 \end{eqnarray}
then 
$\widehat{B}_{adapt} = B^*$, with probability higher than $1 - c_3/p$.
 \end{thm}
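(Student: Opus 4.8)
The plan is to reduce the analysis of $\widehat{B}_{adapt}$ to the already-proven Theorem \ref{thm:consistency}, by showing that the penalty term $\widehat\kappa\,\tr(B)$ acts as an exact Lagrange multiplier for the dropped constraint $\tr(B)=K$. First I would recall that $B^* \in \mathcal{C}_0$ and $\tr(B^*)=K$, so $B^*$ is a feasible point for the optimization defining $\widehat{B}_{adapt}$. The strategy is a sandwich argument on $\tr(\widehat{B}_{adapt})$: I will show that under \eqref{eq:condition_widehat_kappa}, any maximizer $B$ over $\mathcal{C}_0$ of the penalized objective must satisfy $\tr(B)=K$, after which $\widehat{B}_{adapt}$ coincides with the maximizer over $\mathcal{C} = \mathcal{C}_0 \cap \{\tr(B)=K\}$, i.e.\ with $\widehat{B}$ from \eqref{SDP1}, and Theorem \ref{thm:consistency} applies verbatim (its hypothesis \eqref{eq:assumption2} is exactly \eqref{eq:assumption2-bis}).

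The key technical step is the two-sided control of the trace. For the \emph{lower} bound $\tr(\widehat{B}_{adapt}) \geq K$: compare the penalized objective at $\widehat{B}_{adapt}$ with its value at $B^*$; optimality gives $\langle \widehat\Sigma - \widehat\Gamma, \widehat{B}_{adapt} - B^*\rangle \geq \widehat\kappa(\tr(\widehat{B}_{adapt}) - K)$. If $\tr(\widehat{B}_{adapt}) < K$, I would argue, using the block structure $\Sigma = ACA^t+\Gamma$ together with the curvature coming from $\Delta(C^*)$, that decreasing the trace below $K$ strictly decreases $\langle \Sigma - \Gamma, B\rangle$ at a rate at least of order $\tfrac{m}{4}\Delta(C^*)$ per unit of trace, while the fluctuation $\langle \widehat\Sigma - \Sigma - (\widehat\Gamma-\Gamma), \cdot\rangle$ is controlled on the good event by $|\Gamma|_\infty(\sqrt{p/n}+p/n) + \delta_{n,p}$-type terms (the same deviation bounds for $\|\widehat\Sigma - \Sigma\|$ restricted to the relevant cone that are used in the proof of Theorem \ref{thm:consistency}); since $\widehat\kappa < \tfrac{m}{8}\Delta(C^*)$, the penalty is too weak to make shrinking the trace profitable, a contradiction. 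For the \emph{upper} bound $\tr(\widehat{B}_{adapt}) \leq K$: here the lower bound on $\widehat\kappa$ in \eqref{eq:condition_widehat_kappa}, namely $\widehat\kappa > 4|\Gamma|_\infty(\sqrt{p/n}+p/n) + \delta_{n,p}$, must dominate the gain in $\langle \widehat\Sigma-\widehat\Gamma, B\rangle$ obtainable by increasing $\tr(B)$ (splitting clusters); the marginal gain of splitting is governed precisely by the operator-norm-type deviation of $\widehat\Sigma-\Sigma$ on the appropriate set plus $|\widehat\Gamma-\Gamma|_V$, which the chosen threshold beats. Combining, $\tr(\widehat{B}_{adapt})=K$ on an event of probability at least $1-c_3/p$ (union of the good event for $\widehat\Gamma$, the event in \eqref{eq:condition_widehat_kappa}, and the concentration events for $\widehat\Sigma$).

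I expect the main obstacle to be making the "marginal gain of increasing/decreasing the trace" arguments rigorous and uniform over $\mathcal{C}_0$: unlike the constrained problem, I cannot fix $\tr(B)=K$, so I need a quantitative version of Proposition \ref{prp:pop_algoC} that tracks how much the population objective $\langle \Sigma-\Gamma, B\rangle$ changes as $\tr(B)$ moves away from $K$, together with a matching bound on the stochastic term that holds \emph{simultaneously} for all traces in a range. The cleanest route is probably to exploit convexity of $\mathcal{C}_0$ and linearity of the objective to restrict attention to extreme points (partition matrices $B_G$, with $\tr(B_G)=|G|$), reducing to a comparison over partitions $G$ as in the proof of Theorem \ref{thm:consistency}, and then invoking the deviation inequalities established there — in particular the control of $\langle \widehat\Sigma - \Sigma, B_G - B^*\rangle$ — but now with the extra penalty term $\widehat\kappa(|G| - K)$ tilting the comparison in the right direction for both $|G|>K$ and $|G|<K$. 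Once the reduction to extreme points is in place, the remaining estimates are the same ones already proven, so the bulk of the work is the bookkeeping that links $\widehat\kappa$'s two-sided constraint to these known bounds.
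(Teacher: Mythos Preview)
Your high-level idea---show $\tr(\widehat B_{adapt})=K$ first, then invoke Theorem~\ref{thm:consistency}---is not how the paper proceeds, and your proposed implementation has a real gap.

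\textbf{The extreme-point reduction is wrong.} The extreme points of $\mathcal C_0$ (and of $\mathcal C$) are \emph{not} the partition matrices; $\mathcal C$ is a strict convex relaxation of $\mathcal D$, and the whole point of Peng--Wei's construction is that $\mathcal D$ sits properly inside $\mathcal C$. So you cannot reduce the comparison to competitors of the form $B_G$ with $\tr(B_G)=|G|$. Any argument has to handle arbitrary $B\in\mathcal C_0$, which is why the paper works with the uniform bounds of Lemmas~\ref{lem:W2} and~\ref{lem:covariance}. Those lemmas were deliberately stated for all $B\in\mathcal C_0$ (not just $\mathcal C$), and Lemma~\ref{lem:covariance} already carries the extra terms $4|\Gamma|_\infty(\sqrt{p/n}+p/n)(\tr(B)-K)$ and $[\tr(B)-K]_+|\Gamma-\widehat\Gamma|_V$ that you need.

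\textbf{The paper's route is a direct case analysis, not a reduction.} One shows directly that
\[
R(B):=\langle W,\,B^*-B\rangle + n\widehat\kappa\,[\tr(B)-K]>0\quad\text{for all }B\in\mathcal C_0\setminus\{B^*\},
\]
by plugging the $\mathcal C_0$-versions of \eqref{eq:main1}, Lemma~\ref{lem:W2} and Lemma~\ref{lem:covariance} into $R(B)$ and then splitting on $\tr(B)\gtrless K$. The case $\tr(B)>K$ uses the lower bound on $\widehat\kappa$ in \eqref{eq:condition_widehat_kappa} to kill the two extra trace terms from Lemma~\ref{lem:covariance}. The case $\tr(B)=K$ is exactly the argument of Theorem~\ref{thm:consistency}. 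The case $\tr(B)<K$ is where your sketch is most incomplete: the key deterministic inequality (a consequence of \eqref{eq:trace-K} in the paper) is
\[
\sum_{j\neq k}|B_{G_jG_k}|_1 \;\ge\; m\,[K-\tr(B)]\qquad\text{for every }B\in\mathcal C_0,
\]
which says that a trace deficit forces at least $m$ units of off-block mass per missing unit of trace. Combined with the signal lower bound $\tfrac{1}{8}n\Delta(C^*)\sum_{j\neq k}|B_{G_jG_k}|_1$ and the upper bound $\widehat\kappa<\tfrac{m}{8}\Delta(C^*)$, this gives $R(B)>0$. Your ``marginal gain per unit of trace'' heuristic is exactly this inequality, but without it the argument does not close; and it is not obtained by looking at partition matrices but from $B\succeq 0$, $B\mathbf 1=\mathbf 1$, and $\tr(B_{G_kG_k})\ge \mathbf 1^tB_{G_kG_k}\mathbf 1/|G_k|$.

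In short: drop the extreme-point reduction, reuse Lemmas~\ref{lem:W2}--\ref{lem:covariance} over $\mathcal C_0$ (they were written for this purpose), and do the three-case analysis on $\tr(B)$ using the off-block/trace inequality above. That is both simpler and what the paper does.
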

 Condition \eqref{eq:condition_widehat_kappa} in the above theorem encourages us to consider the following data dependent value  of  $\widehat{\kappa}$: 
 \beq\label{eq:estim:kappa}
 \widehat{\kappa}= : 5|\widehat{\Gamma}|_{\infty}\left(\sqrt{\frac{p}{n}}+ \frac{p}{n}\right), \quad \quad  \text{ where $\widehat{\Gamma}$ is defined in \eqref{eq:estim:gamma2}}\ .
 \eeq
 
 We note that the constant 5 may not be  optimal, but further analysis  of this  constant is beyond the scope of this paper. 
 Equipped with the estimator $\widehat \Gamma$ defined in \eqref{eq:estim:gamma2} and  $\widehat \kappa$ defined in (\ref{eq:estim:kappa}), the adaptive estimator (\ref{eq:estimator_penalized}) then fulfills the following recovering property.
 
\begin{cor}\label{cor:Kselection}
There exist $c_1,\ldots, c_3$ three positive constants such that the following holds. 
Assuming that  $\widehat \Gamma$ and $\widehat{\kappa}$ are defined by (\ref{eq:estim:gamma2}) and (\ref{eq:estim:kappa}), $\log(p)\leq c_1 n$, 
and that 
\begin{equation}\label{eq:assumption2_2steps_Kappa}
 \Delta(C^*) \geq c_2 |\Gamma|_{\infty}\left\{\sqrt{  \frac{\log p}{mn}   }+    \sqrt{\frac{p}{nm^2}} + \frac{\log(p)}{n}+ \frac{p}{nm}\right\}\ ,
\end{equation}
then we have
$\widehat{B}_{adapt} = B^*$, with probability higher than $1 - c_3/p$.
\end{cor}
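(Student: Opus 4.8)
The plan is to obtain Corollary~\ref{cor:Kselection} as a direct specialization of Theorem~\ref{thm:Kselection}: one plugs in the explicit estimator $\widehat\Gamma$ from \eqref{eq:estim:gamma2} and the explicit tuning parameter $\widehat\kappa$ from \eqref{eq:estim:kappa}, and then checks that the three hypotheses of that theorem — concentration of $\widehat\Gamma$, the separation condition \eqref{eq:assumption2-bis}, and the two-sided control \eqref{eq:condition_widehat_kappa} on $\widehat\kappa$ — all hold under the single separation assumption \eqref{eq:assumption2_2steps_Kappa}.

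First I would invoke Proposition~\ref{prp:control_u_gamma2}: after possibly enlarging $c_3$ and using $\log p\le c_1 n$, it produces an event $\mathcal{E}_\Gamma$ of probability at least $1-c_3/(3p)$ on which $|\widehat\Gamma-\Gamma|_V\le 2|\widehat\Gamma-\Gamma|_\infty\le \delta_{n,p}$ with $\delta_{n,p}:=c\,|\Gamma|_\infty\sqrt{\log(p)/n}$. With this $\delta_{n,p}$, the extra term $\delta_{n,p}/m$ in \eqref{eq:assumption2-bis} satisfies $\delta_{n,p}/m=c\,|\Gamma|_\infty\sqrt{\log(p)/(nm^2)}\le c\,|\Gamma|_\infty\sqrt{\log(p)/(nm)}$ since $m\ge 1$, hence it is dominated by the first term of \eqref{eq:assumption2_2steps_Kappa}; enlarging $c_2$ in the corollary therefore makes \eqref{eq:assumption2_2steps_Kappa} imply \eqref{eq:assumption2-bis}.

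The substantive step is verifying \eqref{eq:condition_widehat_kappa} on $\mathcal{E}_\Gamma$ for $\widehat\kappa=5|\widehat\Gamma|_\infty(\sqrt{p/n}+p/n)$. On $\mathcal{E}_\Gamma$ one has $\big|\,|\widehat\Gamma|_\infty-|\Gamma|_\infty\,\big|\le\delta_{n,p}$, which for $c_1$ small enough forces $\tfrac12|\Gamma|_\infty\le|\widehat\Gamma|_\infty\le 2|\Gamma|_\infty$. For the upper bound in \eqref{eq:condition_widehat_kappa}, $\widehat\kappa\le 10|\Gamma|_\infty(\sqrt{p/n}+p/n)=10|\Gamma|_\infty\, m\big(\sqrt{p/(nm^2)}+p/(nm)\big)$, and since $\sqrt{p/(nm^2)}+p/(nm)$ is among the terms on the right-hand side of \eqref{eq:assumption2_2steps_Kappa}, that assumption yields $\tfrac m8\Delta(C^*)\ge\tfrac{c_2}{8}|\Gamma|_\infty(\sqrt{p/n}+p/n)>\widehat\kappa$ as soon as $c_2>80$. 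For the lower bound, $\widehat\kappa-4|\Gamma|_\infty(\sqrt{p/n}+p/n)\ge (5|\widehat\Gamma|_\infty-4|\Gamma|_\infty)(\sqrt{p/n}+p/n)\ge\tfrac12|\Gamma|_\infty\sqrt{p/n}$, which exceeds $\delta_{n,p}=c\,|\Gamma|_\infty\sqrt{\log(p)/n}$ since $\sqrt p\ge c\sqrt{\log p}$ for $p$ above a universal constant (the finitely many remaining values of $p$ being absorbed into the constants). Thus \eqref{eq:condition_widehat_kappa} holds on $\mathcal{E}_\Gamma$, an event of probability at least $1-c_3/(3p)$.

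With all three hypotheses of Theorem~\ref{thm:Kselection} in force, the theorem gives $\widehat B_{adapt}=B^*$ with probability at least $1-c_3/p$, and a final renaming of $c_1,c_2,c_3$ completes the proof. The only genuinely delicate point is the calibration in the previous paragraph: one must check that the numerical constant $5$ in \eqref{eq:estim:kappa} is large enough to clear the lower threshold in \eqref{eq:condition_widehat_kappa} after paying both for the estimation error $\delta_{n,p}$ and for the gap between $|\widehat\Gamma|_\infty$ and $|\Gamma|_\infty$, while still remaining below $\tfrac m8\Delta(C^*)$ — the latter being guaranteed precisely because the constant $c_2$ in the corollary's separation assumption is free to be taken larger than $80$.
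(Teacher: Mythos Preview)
Your approach is correct and is exactly the (implicit) proof the paper has in mind: Corollary~\ref{cor:Kselection} is obtained by plugging Proposition~\ref{prp:control_u_gamma2} and the choice \eqref{eq:estim:kappa} into Theorem~\ref{thm:Kselection}, and the paper gives no further details beyond this.

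One small arithmetic slip to fix: from the sandwich $\tfrac12|\Gamma|_\infty\le|\widehat\Gamma|_\infty\le 2|\Gamma|_\infty$ that you state, the quantity $5|\widehat\Gamma|_\infty-4|\Gamma|_\infty$ can be as small as $-\tfrac32|\Gamma|_\infty$, so it does not yield the claimed lower bound $\tfrac12|\Gamma|_\infty\sqrt{p/n}$. Your own mechanism repairs this immediately: since $|\widehat\Gamma-\Gamma|_\infty\le c\,|\Gamma|_\infty\sqrt{\log(p)/n}$ and $\log(p)\le c_1 n$, taking $c_1$ small enough gives the tighter $|\widehat\Gamma|_\infty\ge \tfrac{9}{10}|\Gamma|_\infty$, whence $5|\widehat\Gamma|_\infty-4|\Gamma|_\infty\ge \tfrac12|\Gamma|_\infty$ and the rest of your lower-bound argument for \eqref{eq:condition_widehat_kappa} goes through unchanged.
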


We observe that the Condition \eqref{eq:assumption2_2steps_Kappa} that ensures perfect recovery of $B^*$ 
when $K$ is unknown is the same as the condition \eqref{eq:assumption2_2steps2} employed in Corollary \ref{cor:consistency_2_steps2} when  $K$ was assumed to be known. This condition is shown to be near-minimax optimal in the next section.

\section{Minimax lower bound}\label{sec:minimax}

To ease the presentation, we restrict ourselves in this section to the toy model with $C^* = \tau I_K$ and $\Gamma=I_{p}$, so that, given a partition $G$, the covariance matrix decomposes as 
\beq\label{eq:model_identite}
 \Sigma_G = A_G \big(\tau I_K \big) A_G^t +  I_p\ , 
\eeq
where $A_G$ is the assignment matrix associated to  the partition $G$.  Note that, in this case $\Delta(C^*) = 2\tau$. Define $\cG$ the class of all partitions of $\{1,\ldots, p\}$ into $K$ groups of identical size $m$, therefore  $p=mK$. In the sequel, $\P_{\Sigma_G}$ refers to the normal distribution with covariance $\Sigma_G$.

\medskip 

The minimax optimal error probability for partition recovery is  defined as: 
\beq\label{eq:definition_minimax_probability}
\overline{\mathbf{R}}^*[\tau,n,m,p] := \inf_{\hat G} \sup_{G\in \cG} \mathbb{P}_{\Sigma_G} \big[ \hat{G}\neq G\big] .
\eeq

\begin{thm}\label{prp:minimax_lower_bound} 
There exists a numerical constant $c>0$ such that the following holds.
The optimal error probability of recovery $\overline{\mathbf{R}}^*[\tau,n,m,p]$ is larger than $1/7$ as soon as
 \beq \label{eq:minimax_lower_bound}
 \Delta(C^*) = 2 \tau \leq c \left[\sqrt{\frac{\log(p)}{n(m-1)}}\bigvee  \frac{\log(p)}{n} \right]\ .
\eeq
 
\end{thm}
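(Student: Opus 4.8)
\textit{Proof plan.} The plan is to establish the lower bound by a standard Fano-type argument over a large family of partitions obtained from a fixed reference by a single transposition. Fix any $G_0\in\cG$ with groups $H_1,\dots,H_K$ of size $m$. For each unordered pair of distinct labels $\{k,\ell\}$ and each $a\in H_k$, $b\in H_\ell$, let $G_{a,b}\in\cG$ be obtained from $G_0$ by exchanging the group memberships of $a$ and $b$, and let $\cG_1$ be the set of all such $G_{a,b}$. Since the set of indices whose membership differs between $G_0$ and $G_{a,b}$ is exactly $\{a,b\}$, distinct pairs give distinct partitions, so $|\cG_1|=\binom K2 m^2\ge p^2/4$ (using $p=Km$, $K\ge2$, hence $p\ge 4$) and $\log|\cG_1|\ge\log p$. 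Reducing estimation to testing and invoking a standard Fano inequality (as in the minimax lower bounds of \cite{cord}) gives
\[
\overline{\mathbf{R}}^*[\tau,n,m,p]\ \ge\ \inf_{\hat G}\max_{G\in\{G_0\}\cup\cG_1}\P_{\Sigma_G}\big[\hat G\ne G\big]\ \ge\ 1-\frac{\max_{G\in\cG_1}\KL\big(\P_{\Sigma_G}^{\otimes n}\,\|\,\P_{\Sigma_{G_0}}^{\otimes n}\big)+\log 2}{\log|\cG_1|}\ ,
\]
so it remains to bound the (common) value of these KL divergences.

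The heart of the argument is an \emph{exact} evaluation of this divergence, which uses the rigidity of the toy model. Because $A_G^t A_G=mI_K$ for every $G\in\cG$, the matrix $\Sigma_G=A_G(\tau I_K)A_G^t+I_p$ has eigenvalue $1+m\tau$ with multiplicity $K$ and eigenvalue $1$ with multiplicity $p-K$, \emph{independently of $G$}; in particular $\det\Sigma_G=\det\Sigma_{G_0}$, so the log-determinant cancels in the Gaussian KL formula and
\[
\KL\big(\P_{\Sigma_G}^{\otimes n}\,\|\,\P_{\Sigma_{G_0}}^{\otimes n}\big)=\frac n2\big[\tr(\Sigma_{G_0}^{-1}\Sigma_G)-p\big]=\frac n2\big\langle \Sigma_{G_0}^{-1},\,\Sigma_G-\Sigma_{G_0}\big\rangle\ .
\]
For $G=G_{a,b}$ (with $a\in H_k$, $b\in H_\ell$) a direct expansion gives the rank-two identity $\Sigma_{G_{a,b}}-\Sigma_{G_0}=-\tau(\tilde u\, w^t+w\,\tilde u^t)$ with $w=e_a-e_b$ and $\tilde u=\mathbf 1_{H_k\setminus\{a\}}-\mathbf 1_{H_\ell\setminus\{b\}}$, while $\Sigma_{G_0}^{-1}$ is block diagonal with $m\times m$ blocks $I_m-\tfrac{\tau}{1+m\tau}\mathbf 1_m\mathbf 1_m^t$ by Sherman--Morrison. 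Substituting and using $\tilde u^t\Sigma_{G_0}^{-1}w=-\tfrac{2(m-1)\tau}{1+m\tau}$ yields the closed form
\[
\KL\big(\P_{\Sigma_{G_{a,b}}}^{\otimes n}\,\|\,\P_{\Sigma_{G_0}}^{\otimes n}\big)=\frac{2n(m-1)\tau^2}{1+m\tau}\ \le\ 2n\big[(m-1)\tau^2\wedge\tau\big]\ ,
\]
the last step since $\tfrac{(m-1)\tau^2}{1+m\tau}\le(m-1)\tau^2$ and $\tfrac{(m-1)\tau^2}{1+m\tau}\le\tfrac{(m-1)\tau^2}{m\tau}\le\tau$; note this holds for every $\tau>0$, with no restriction on its size.

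To conclude, assume \eqref{eq:minimax_lower_bound} with $c\le 1/4$. If the maximum there is attained by the first term then $(m-1)\tau^2\le\tfrac{c^2}{4}\tfrac{\log p}{n}\le\tfrac{\log p}{8n}$; if by the second, $\tau\le\tfrac c2\tfrac{\log p}{n}\le\tfrac{\log p}{8n}$; either way $\max_{G\in\cG_1}\KL(\P_{\Sigma_G}^{\otimes n}\|\P_{\Sigma_{G_0}}^{\otimes n})\le\tfrac14\log p\le\tfrac14\log|\cG_1|$. Plugging into the Fano bound and using $\log|\cG_1|\ge\log p\ge\log 4$ gives $\overline{\mathbf{R}}^*[\tau,n,m,p]\ge 1-\big(\tfrac14\log|\cG_1|+\log2\big)/\log|\cG_1|\ge\tfrac14>\tfrac17$. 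The one genuinely delicate ingredient is the KL estimate: a crude bound by $\|\Sigma_{G_0}^{-1}\|_{op}^2\|\Sigma_G-\Sigma_{G_0}\|_F^2\asymp n(m-1)\tau^2$ would recover only the $\sqrt{\log(p)/(n(m-1))}$ term, whereas the exact value $\tfrac{2n(m-1)\tau^2}{1+m\tau}$ degrades to order $n\tau$ when $m\tau$ is large and so lets a \emph{single} family of hypotheses produce both regimes of \eqref{eq:minimax_lower_bound}. A secondary, purely combinatorial point is ensuring $|\cG_1|\gtrsim p^2$ (rather than only $m^2$), so that the Fano denominator is of order $\log p$.
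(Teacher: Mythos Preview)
Your proof is correct and follows essentially the same strategy as the paper: a Fano argument over partitions obtained from a reference by a single transposition, together with the exact Kullback--Leibler identity $\KL=\tfrac{2n(m-1)\tau^2}{1+m\tau}$, which yields both regimes in \eqref{eq:minimax_lower_bound}. The differences are cosmetic: you use a larger family ($\binom{K}{2}m^2$ swaps rather than the paper's $p-m$ swaps involving a fixed index), and your KL computation is tidier---you exploit that all $\Sigma_G$ are isospectral so the log-determinant term vanishes, and then reduce to the rank-two inner product $\tilde u^t\Sigma_{G_0}^{-1}w$ via Sherman--Morrison, whereas the paper reduces the trace computation to the eigenvalues of an explicit $4\times 4$ matrix (Lemma~\ref{lem:computation_kullback_distance}); both routes land on the same closed form.
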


In view of Corollary \ref{cor:consistency_2_steps2} (see also Corollary \ref{cor:Kselection}), a sufficient condition on the size of $\tau$ under which one obtains perfect partition recovery is that $\tau$ be of order $\sqrt{\frac{\log(p)}{nm}} + \frac{\log(p)}{n}$, when the ratio between the number  $K$ of groups and $\log(p)$ is bounded from above. However, the necessary conditions \eqref{eq:minimax_lower_bound} and sufficient conditions \eqref{eq:assumption2_2steps_Kappa} scale differently with $K$,  when $K$ is large.  This discrepancy  between minimax lower bounds and the performance of  estimators obtained via convex optimization algorithms has also been pinpointed in network clustering via the  stochastic block model~\cite{chen2014statistical}.  It has been conjectured that, for large $K$, there is a gap between the statistical boundary, i.e. the minimal  cluster separation  for which a statistical method achieves perfect clustering with high probability, and the polynomial boundary, i.e. the minimal  cluster separation for which there exists a polynomial-time algorithm that  achieves  perfect clustering.  Further investigation of this gap is beyond  the scope of this paper and we refer to~\cite{chen2014statistical} for more details.

\section{A comparison between PECOK and Spectral Clustering} \label{sec:spectral}

In this section we discuss connections between the clustering methods introduced above and  spectral clustering, a method 
that has  become  popular in  network clustering.  When used for variable clustering, uncorrected spectral clustering  consists in  applying a clustering algorithm, such as $K$-means,  on the rows of the $p\times K$-matrix obtained by retaining  the $K$ leading eigenvectors of $\widehat \Sigma$. Similarly to  Section \ref{sec:convex}, we propose below a correction of this algorithm. 

First, we recall  the premise of spectral clustering, adapted to our context.  For $G^*$-block covariance models as (\ref{eq:block}),  we have $\Sigma - \Gamma = AC^*A^{t}$. Let $U$ be the $p \times K$ matrix collecting the  $K$ leading eigenvectors of $\Sigma - \Gamma$.  It has been shown, see e.g. Lemma 2.1 in Lei and Rinaldo \cite{LeiRinaldo},
that $a$ and $b$ belong to the same cluster if and only if $U_{a:} = U_{b:}$ if and only if  $[UU^{t}]_{a:} =  [UU^{t}]_{b:}  $. 

Therefore, the partition $G^*$ could be recovered from $\Sigma - \Gamma$ via any clustering algorithm applied to the rows of $U$ or $UU^{t}$, for instance by a $K$-means.  It is natural therefore to consider the possibility of estimating $G^*$ by clustering the rows of 
$\widehat{U}$, the matrix of the $K$ leading  eigenvectors of 
$\widetilde{\Sigma} := \widehat{\Sigma} - \widehat{\Gamma}$.  Since $\widehat{U}$ is an orthogonal matrix, when the clustering algorithm is rotation invariant, it is  equivalent to cluster the rows of $\widehat{U}\widehat{U}^{t}$. 
We refer to this algorithm as Corrected Spectral Clustering (CSC), as it is relative to $\widehat{\Sigma} - \widehat{\Gamma}$, not  $\widehat{\Sigma}$. The two steps of CSC are then: 
\medskip

\centerline{\fbox{
\begin{minipage}{0.9\textwidth}
{\bf CSC algorithm}
\begin{enumerate}
\item Compute $\widehat{U}$, the matrix of the $K$ leading  eigenvectors of 
$\widetilde{\Sigma} := \widehat{\Sigma} - \widehat{\Gamma}$
\item Estimate $G^*$ by clustering  the rows of $\widehat{U}$, via  an $\eta$-approximation of $K$-means, defined in (\ref{eq:definition_alpha_approximation}). 
\end{enumerate}
\end{minipage}
}}\medskip

\noindent An $\eta$-approximation of $K$-means  is defined as follows.
Let  $\eta>1$ be a given positive number. Denote $\cA_{p,K}$ the collection of membership matrices, that is  $p\times K$ binary matrices whose rows contain exactly one non-zero entry. Note that a membership matrix $A\in \cA_{p,K}$ defines a partition $G$. Given a $p\times K$ matrix  $\widehat{U}$, the membership matrix $\widehat{A}$  is said to be an $\eta$-approximation $K$-means problem on $\widehat{U}$ if there exists a $K\times K$ matrix $\widehat{Q}$ such that 
\beq\label{eq:definition_alpha_approximation}
\|\widehat{U}- \widehat{A} \widehat{Q}\|_F^2  \leq \eta \min_{A\in \cA{p,k}}\min_Q \|\widehat{U} - AQ\|_F^2\ .
\eeq
Note then that $G^*$ will be estimated by  $\widehat{G}$, the partition corresponding to $\widehat{A}$.  
An example of polynomial time approximate $K$-means algorithm is given in 
Kumar {\it et al.}~\cite{ApproxKmeans}. We show below how  CSC  relates  to our proposed PECOK  estimator.

\begin{lem}\label{CSC}
When the clustering algorithm applied at the second step of Corrected Spectral Clustering (CSC) is rotation  invariant, then CSC
 is equivalent to the following algorithm: \\

\noindent Step 1.  Find  \begin{equation}\label{SDP2}
\overline{B}=\argmax\{ \langle \widetilde \Sigma,B\rangle\ : \ tr(B)=K,\  I \succcurlyeq B \succcurlyeq 0\}.
\end{equation}
\noindent Step 2.  Estimate $G^*$ by clustering  the rows of $\overline{B}$, via  an $\eta$-approximation of $K$-means, defined in (\ref{eq:definition_alpha_approximation}). 
\end{lem}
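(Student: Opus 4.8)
The strategy is to first solve the SDP \eqref{SDP2} in closed form, identifying $\overline{B}$ with a spectral projector, and then to show that applying an $\eta$-approximate $K$-means to the rows of that projector is literally the same optimization problem as applying one to the rows of $\widehat{U}$.

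First I would establish that $\overline{B} = \widehat{U}\widehat{U}^{t}$, the orthogonal projector onto the span of the $K$ leading eigenvectors of $\widetilde{\Sigma}$. This is the Ky Fan variational characterization of the sum of the $K$ largest eigenvalues: for any symmetric $M$, $\max\{\langle M, B\rangle : \tr(B)=K,\ I \succcurlyeq B \succcurlyeq 0\} = \sum_{i=1}^{K}\lambda_{i}(M)$, the maximum being attained precisely at projectors onto top-$K$ invariant subspaces; when $\lambda_{K}(\widetilde{\Sigma}) > \lambda_{K+1}(\widetilde{\Sigma})$ this subspace, and hence $\overline{B}=\widehat{U}\widehat{U}^{t}$, is unique, while in the non-generic tie case both algorithms share the same family of admissible outputs. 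This is where the hypothesis that the Step 2 routine is rotation invariant enters: the leading eigenvector matrix $\widehat{U}$ is only defined up to right multiplication by a $K\times K$ orthogonal matrix, so ``cluster the rows of $\widehat{U}$'' is well posed only for a rotation invariant clustering routine --- and the $\eta$-approximate $K$-means of \eqref{eq:definition_alpha_approximation} is such a routine, since replacing $\widehat{U}$ by $\widehat{U}O$ only reparametrizes $Q$ as $QO^{t}$ in \eqref{eq:definition_alpha_approximation} and hence leaves the set of $\eta$-approximate membership matrices unchanged.

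Next I would connect the two clustering problems through the identity $\widehat{U}^{t}\widehat{U}=I_{K}$. For any membership matrix $A\in\cA_{p,K}$ and any $K\times K$ matrix $Q$, right multiplication by $\widehat{U}^{t}$ (whose rows are orthonormal) preserves the Frobenius norm, so $\|\widehat{U}\widehat{U}^{t}-AQ\widehat{U}^{t}\|_{F} = \|(\widehat{U}-AQ)\widehat{U}^{t}\|_{F} = \|\widehat{U}-AQ\|_{F}$; conversely, for any $K\times p$ matrix $Q'$, using $(\widehat{U}\widehat{U}^{t})\widehat{U}=\widehat{U}$ and $\|\widehat{U}\|_{op}=1$, $\|\widehat{U}-AQ'\widehat{U}\|_{F} = \|(\widehat{U}\widehat{U}^{t}-AQ')\widehat{U}\|_{F} \leq \|\widehat{U}\widehat{U}^{t}-AQ'\|_{F}$. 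Minimizing over $Q$ and $Q'$, and noting that $Q'\mapsto Q'\widehat{U}$ maps $K\times p$ matrices onto all $K\times K$ matrices, gives $\min_{Q}\|\widehat{U}-AQ\|_{F}^{2} = \min_{Q'}\|\widehat{U}\widehat{U}^{t}-AQ'\|_{F}^{2}$ for every $A$, hence equality of the two $K$-means optima and, more precisely, the equivalence: a pair $(\widehat{A},\widehat{Q})$ is an $\eta$-approximation \eqref{eq:definition_alpha_approximation} of the $K$-means problem on $\widehat{U}$ if and only if $(\widehat{A},\widehat{Q}\widehat{U}^{t})$ is one on $\widehat{U}\widehat{U}^{t}$. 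In particular the two procedures return the same membership matrix $\widehat{A}$, hence the same partition $\widehat{G}$.

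Combining the two steps proves the lemma: CSC computes $\widehat{U}$ and clusters its rows, the displayed algorithm computes $\overline{B}=\widehat{U}\widehat{U}^{t}$ and clusters its rows, and by the above these produce identical outputs. I expect no real obstacle here --- the only points requiring care are verifying the Ky Fan maximizer (and treating the degenerate no-spectral-gap case cleanly) and tracking the reparametrization $Q\leftrightarrow Q\widehat{U}^{t}$ through the $\eta$-approximation definition; both are routine.
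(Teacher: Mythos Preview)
Your proof is correct and follows essentially the same route as the paper: identify $\overline{B}=\widehat{U}\widehat{U}^{t}$ via the Ky Fan maximum principle (which the paper re-derives by diagonalizing $\widetilde{\Sigma}$ and reducing to the diagonal problem), then observe that the rows of $\widehat{U}\widehat{U}^{t}$ are an isometric embedding of the rows of $\widehat{U}$ so any rotation-invariant clustering routine returns the same partition. Your explicit Frobenius-norm bookkeeping with the $\eta$-approximation definition and your handling of the no-spectral-gap case are slightly more detailed than the paper's one-line geometric remark, but the underlying argument is the same.
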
 

The connection between PECOK and spectral clustering now becomes clear.
The PECOK estimator involves the calculation of 
\begin{equation}\label{SDP1bis}
\widehat B=\argmax_{B}\{ \langle \widetilde \Sigma,B\rangle \ : \ B1=1,\ B_{ab}\geq 0,\ tr(B)=K,\  B \succcurlyeq 0\}.
\end{equation}
Since the matrices $B$ involved in (\ref{SDP1bis}) are doubly stochastic, their eigenvalues are smaller than 1 and hence (\ref{SDP1bis}) is equivalent to 
\[ 
\widehat B=\argmax_{B}\{ \langle \widetilde \Sigma,B\rangle\ : \ B1=1,\ B_{ab}\geq 0,\ tr(B)=K,\  I \succcurlyeq B \succcurlyeq 0\}. \]
 Note then that 
$\overline{B}$ can be viewed as a less constrained version of $\widehat{B}$, in which $\mathcal{C}$ is replaced by 
  \[ \overline{\mathcal{C}} = \{ B:  \ tr(B)=K,\  I \succcurlyeq B \succcurlyeq 0\}, \] 
 where we have dropped the  $p(p+1)/2$ constraints  given by  $B1=1$, and $B_{ab}\geq 0$. 
 We  show in what follows that the possible 
computational gains resulting from such a strategy may result in severe losses in the theoretical guarantees for exact partition recovery.
In addition, the proof of Lemma \ref{CSC} shows that $\overline{B}=\widehat{U}\widehat{U}^{t}$ so, contrary to $\widehat{B}$, the estimator $\overline{B}$ is (almost surely) never equal to $B^*$.


\medskip

To simplify the presentation, we assume in the following that all the groups have the same size $|G^*_1|=\ldots=|G^*_K|=m=p/K$. We emphasize that this information is not required by either PECOK or CSC, or in the  proof of Theorem \ref{prp:spectral_clustering} below. We only 
use it here to illustrate the issues associated with CSC in a way that is not cluttered by unnecessary notation.  We denote by $\mathcal{S}_{K}$  
the set of permutations on $\{1,\ldots,K\}$ and we denote by
$$\overline{L}(\widehat G,G^*)= \min_{\sigma\in \mathcal{S}_{K}}\sum_{k=1}^K{|G^*_{k}\setminus \widehat G_{\sigma(k)}|\over m}$$
 the sum of the ratios of miss-assigned  variables with indices in $G^*_k$.  In the previous sections, we studied perfect recovery of $G^*$, which would correspond to 
 $\overline{L}(\widehat G,G^*) = 0$, with high probability. We give below conditions under which  $\overline{L}(\widehat G,G^*) \leq \rho$,  for an appropriate quantity  $\rho < 1$,  and  we show that 
 very small values of $\rho$ require large cluster separation, possibly much larger than the minimax optimal rate.  We begin with a general  theorem pertaining to partial partition recovery by CSC, under  restrictions on the  smallest eigenvalue $\lambda_{K}(C^*)$ of $C^*$.

\medskip

\begin{thm}\label{prp:spectral_clustering}
We let  $Re(\Sigma)=tr(\Sigma)/\|\Sigma\|_{op}$ denote  the effective rank of $\Sigma$.
There exist  $c_{\eta}>0$ and $c'_{\eta}>0$ only depending  on $\eta$ and numerical constants $c_1$ and $c_2$
such that the two following bounds hold.
For any $0<\rho< 1$, if 
\begin{equation}\label{eq:spectral}
 \lambda_{K}(C^*)\geq {c'_{\eta}\sqrt{K} \|\Sigma\|_{op}\over m\sqrt{\rho}}\sqrt{\frac{Re(\Sigma)\vee \log(p)}{n}},
\end{equation}
then $\overline{L}(\widehat G,G^*)\leq \rho$,  with probability larger than $1-c_2/p$.
\end{thm}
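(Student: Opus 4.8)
The plan is to follow the standard two-step template for analyzing spectral clustering: first, use a Davis--Kahan / $\sin\Theta$ argument to control the distance between the estimated invariant subspace $\widehat U$ (top $K$ eigenvectors of $\widetilde\Sigma = \widehat\Sigma - \widehat\Gamma$) and the population subspace $U$ (top $K$ eigenvectors of $\Sigma - \Gamma = AC^*A^t$); second, translate this subspace-proximity bound into a misclassification bound for the $\eta$-approximate $K$-means step, exploiting the fact that the population rows $U_{a:}$ take exactly $K$ distinct values, one per cluster, each of squared norm $1/m$. First I would write $\widetilde\Sigma - (\Sigma - \Gamma) = (\widehat\Sigma - \Sigma) + (\Gamma - \widehat\Gamma)$ and bound $\|\widehat\Sigma - \Sigma\|_{op}$ by a Gaussian covariance concentration inequality (of the Koltchinskii--Lounici / Bunea--Xiao type), which gives a bound of order $\|\Sigma\|_{op}\big(\sqrt{(Re(\Sigma)\vee\log p)/n} \vee (Re(\Sigma)\vee\log p)/n\big)$ with probability at least $1 - c/p$; the term $\|\Gamma - \widehat\Gamma\|_{op} \le |\widehat\Gamma - \Gamma|_\infty$ is lower-order by Proposition \ref{prp:control_u_gamma2}. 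The eigengap of $\Sigma - \Gamma$ separating the top $K$ eigenvalues from $0$ is exactly $\lambda_K(AC^*A^t) = m\,\lambda_K(C^*)$ (since $A^tA = \mathrm{diag}(|G_k|) = mI_K$ in the equal-size case), so Davis--Kahan yields
\[
\|\widehat U \widehat U^t - UU^t\|_F \;\le\; \frac{c\sqrt{K}\,\|\widehat\Sigma - \Sigma\|_{op}}{m\,\lambda_K(C^*)}\;,
\]
with the $\sqrt{K}$ coming from passing from the spectral to the Frobenius norm on a rank-$\le 2K$ difference.

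Next I would run the now-classical argument (as in Lei--Rinaldo \cite{LeiRinaldo}, Lemma 5.3 there, adapted to our setting) that converts $\|\widehat U\widehat U^t - UU^t\|_F$ into a count of misclassified indices. Write $\widehat U = \overline B$ via Lemma \ref{CSC}; an $\eta$-approximate $K$-means solution $\widehat A\widehat Q$ on $\widehat U$ satisfies $\|\widehat U - \widehat A\widehat Q\|_F^2 \le \eta \|\widehat U - U R\|_F^2$ for the best rotation $R$, and the right-hand side is $\le \eta\,c\,\|\widehat U\widehat U^t - UU^t\|_F^2$. Since any two distinct population cluster-centroids are at Euclidean distance $\sqrt{2/m}$ apart, a row $a$ can only be misclassified if its fitted centroid is at distance $\ge \tfrac12\sqrt{2/m}$ from the truth, so the number of misclassified indices is at most $\frac{8m}{1}\cdot(\text{total squared fitting error}) \lesssim m \cdot \eta\,\|\widehat U\widehat U^t - UU^t\|_F^2$; dividing by the cluster size $m$ to form $\overline L(\widehat G, G^*)$ cancels one factor of $m$ and gives
\[
\overline L(\widehat G, G^*) \;\lesssim\; \eta\,\|\widehat U\widehat U^t - UU^t\|_F^2 \;\lesssim\; \eta\,\frac{K\,\|\widehat\Sigma - \Sigma\|_{op}^2}{m^2\,\lambda_K(C^*)^2}\;.
\]
Imposing that this last quantity be $\le \rho$ is exactly condition \eqref{eq:spectral} after plugging in the bound on $\|\widehat\Sigma - \Sigma\|_{op}$ and rearranging, where the $\sqrt{K/(\rho)}$, the $1/m$, and the $\|\Sigma\|_{op}\sqrt{(Re(\Sigma)\vee\log p)/n}$ all appear in the stated form (the ``$\vee\,(Re(\Sigma)\vee\log p)/n$'' tail of the covariance bound is dominated under $\log p \le c_1 n$ and is absorbed into the constant $c'_\eta$).

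The main obstacle I anticipate is the second step rather than the first: making the ``distance between subspaces $\Rightarrow$ fraction misclassified'' implication fully rigorous requires some care, because the centroid-alignment argument must handle the optimal permutation $\sigma \in \mathcal S_K$ simultaneously with the optimal rotation $R$, and one must argue that the $\eta$-approximate solution cannot ``collapse'' two true clusters into one or split one into two without paying a Frobenius penalty that contradicts the subspace bound — this is where the equal-size assumption $|G_k^*| = m$ and the exact value $\sqrt{2/m}$ of the inter-centroid gap are used, and where I would either cite the corresponding lemma of Lei--Rinaldo \cite{LeiRinaldo} or reproduce its short proof. The first step is comparatively routine: it is just Gaussian covariance concentration plus Davis--Kahan, with the only model-specific input being the identity $\lambda_K((\Sigma - \Gamma)) = m\lambda_K(C^*)$.
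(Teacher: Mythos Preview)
Your proposal is essentially the same approach as the paper's: both combine the Lei--Rinaldo misclassification lemma (with the $\sqrt{2/m}$ inter-centroid gap), Davis--Kahan applied with eigengap $m\lambda_K(C^*)$, the Koltchinskii--Lounici covariance concentration bound, and Proposition~\ref{prp:control_u_gamma2} for $\widehat\Gamma$. One small imprecision to fix: you write that the quadratic tail $(Re(\Sigma)\vee\log p)/n$ in the covariance bound is ``dominated under $\log p\le c_1 n$,'' but that assumption says nothing about $Re(\Sigma)/n$; the paper instead observes that $\|\Sigma\|_{op}\ge m\lambda_K(C^*)$, so condition~\eqref{eq:spectral} itself forces $(Re(\Sigma)\vee\log p)/n\le 1/c_\eta^2$, which is what lets you drop the quadratic term.
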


The proof extends  the arguments of \cite{LeiRinaldo},   initially developped for clustering procedures in stochastic block models, to our context. 
Specifically, we relate the error $\overline{L}(\widehat G,G^*)$ to the noise level, quantified in this problem by  $\|\widetilde{\Sigma}-AC^*A^t\|_{op}$.  We then employ the results of 
 \cite{koltchinskii2014concentration} and \cite{BuneaEtal} to show that this 
operator norm can be controlled, with high probability, which leads to the conclusion of the theorem.  \\

We observe that $\Delta(C^*)\geq 2 \lambda_{K}(C^*)$, so the lower bound  (\ref{eq:spectral}) on $ \lambda_{K}(C^*)$ enforces the same lower-bound on $\Delta(C^*)$.
To further facilitate the comparison with the performances of PECOK,  we discuss both the conditions and the conclusion of 
this theorem  in the simple setting where $C^*=\tau I$ and $\Gamma=I$. Then, the cluster separation measures coincide up to a factor 2,  $\Delta (C^*) = 2\lambda_K(C^*) = 2\tau$.

\begin{cor}[Illustrative example: $C^*=\tau I$ and $\Gamma=I$]\label{cor1_spectral} There exist three positive numerical constants $c_{1,\eta}$, $c_{2,\eta}$ and $c_3$ such that the following holds.
 For any $0<\rho<1$, if 
\beq\label{eq:condition_consistency_low_rank}
 \rho\geq c_{1,\eta}\Big[ \frac{K^2}{n}+ \frac{K\log(p)}{n}\Big]\quad \quad \text{ and }\quad \quad \tau \geq c_{2,\eta}\Big[ {K^2\over \rho n} \vee \frac{K}{\sqrt{\rho nm}}\Big]\ ,
 \eeq
then $\overline{L}(\widehat G,G^*)\leq \rho$, with probability larger than $1-c_3/p$.~\\
\end{cor}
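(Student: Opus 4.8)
The plan is to derive Corollary~\ref{cor1_spectral} directly from Theorem~\ref{prp:spectral_clustering} by substituting the concrete values $C^* = \tau I_K$ and $\Gamma = I_p$ into condition \eqref{eq:spectral} and simplifying. First I would record the relevant scalar quantities in this special case: since $\Sigma = A(\tau I_K)A^t + I_p$, each block of $\Sigma$ on the diagonal has value $\tau + 1$ on the variance entries and $\tau$ off-diagonal within a group, so $\|\Sigma\|_{op} = m\tau + 1$, $\tr(\Sigma) = p(\tau+1) = mK(\tau+1)$, and hence $Re(\Sigma) = \tr(\Sigma)/\|\Sigma\|_{op} = mK(\tau+1)/(m\tau+1)$. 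Also $\lambda_K(C^*) = \tau$. The key observation is that $Re(\Sigma)$ is always between $K$ and $mK/( \text{something})$; more precisely $Re(\Sigma) \asymp K$ when $m\tau \gtrsim 1$ and $Re(\Sigma) \asymp mK$ when $m\tau \lesssim 1$, so in all cases $Re(\Sigma) \leq mK$ and $Re(\Sigma) \geq K$.

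Next I would plug these into \eqref{eq:spectral}. That condition reads $\tau \geq \frac{c'_\eta \sqrt{K}(m\tau+1)}{m\sqrt{\rho}}\sqrt{\frac{Re(\Sigma)\vee\log p}{n}}$. I would split into two regimes according to whether $m\tau \leq 1$ or $m\tau > 1$. In the regime $m\tau > 1$ we have $\|\Sigma\|_{op} \asymp m\tau$ and $Re(\Sigma) \asymp K$, so \eqref{eq:spectral} becomes, up to constants, $\tau \gtrsim \frac{\sqrt{K}\, m\tau}{m\sqrt{\rho}}\sqrt{\frac{K\vee\log p}{n}}$, i.e. $1 \gtrsim \frac{\sqrt{K}}{\sqrt{\rho}}\sqrt{\frac{K\vee\log p}{n}}$, which is exactly the first condition in \eqref{eq:condition_consistency_low_rank} (after squaring: $\rho \gtrsim \frac{K(K\vee\log p)}{n} = \frac{K^2}{n}\vee\frac{K\log p}{n}$). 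In the complementary regime $m\tau \leq 1$ we have $\|\Sigma\|_{op}\asymp 1$ and $Re(\Sigma)\asymp mK$, so \eqref{eq:spectral} becomes $\tau \gtrsim \frac{\sqrt K}{m\sqrt\rho}\sqrt{\frac{mK\vee\log p}{n}}$; bounding $mK\vee\log p \leq mK + \log p$ and using $\sqrt{a+b}\leq\sqrt a+\sqrt b$ gives $\tau \gtrsim \frac{\sqrt K}{m\sqrt\rho}\left(\sqrt{\frac{mK}{n}}+\sqrt{\frac{\log p}{n}}\right) = \frac{K}{\sqrt{\rho n m}} + \frac{\sqrt{K\log p}}{m\sqrt{\rho n}}$. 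I would then check that under the first condition of \eqref{eq:condition_consistency_low_rank} the term $\frac{\sqrt{K\log p}}{m\sqrt{\rho n}}$ is dominated by $\frac{K^2}{\rho n}$ (since $\rho n \gtrsim K\log p$ forces $\frac{\sqrt{K\log p}}{\sqrt{\rho n}}\lesssim 1 \lesssim \frac{K^2}{\sqrt{\rho n}\cdot m}$ is not quite it — one needs to be slightly more careful), so that the second requirement in \eqref{eq:condition_consistency_low_rank}, namely $\tau \gtrsim \frac{K^2}{\rho n}\vee\frac{K}{\sqrt{\rho n m}}$, implies \eqref{eq:spectral} in both regimes.

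The main obstacle I anticipate is bookkeeping the case analysis cleanly: one must verify that the single pair of conditions in \eqref{eq:condition_consistency_low_rank} is strong enough to cover \emph{both} the $m\tau \leq 1$ and $m\tau > 1$ branches of \eqref{eq:spectral} simultaneously, including the cross term $\frac{\sqrt{K\log p}}{m\sqrt{\rho n}}$ that does not appear explicitly in \eqref{eq:condition_consistency_low_rank}. The resolution is that the first condition $\rho\geq c_{1,\eta}(K^2/n + K\log p/n)$ is precisely what makes this cross term negligible relative to $\frac{K^2}{\rho n}$: from $\rho n \geq c_{1,\eta}K\log p$ one gets $\log p \leq \rho n/(c_{1,\eta}K)$, hence $\frac{\sqrt{K\log p}}{m\sqrt{\rho n}} \leq \frac{1}{m\sqrt{c_{1,\eta}}} \leq \frac{K}{\sqrt{\rho n m}}$ provided $\rho n \lesssim K m$, and otherwise the term is absorbed by the $K^2/(\rho n)$ piece. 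Once this domination is established, the corollary follows immediately by invoking Theorem~\ref{prp:spectral_clustering} with the constant $c'_\eta$ absorbed into $c_{2,\eta}$, and the probability bound $1-c_2/p$ from the theorem becomes the claimed $1-c_3/p$.
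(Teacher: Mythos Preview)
Your approach is exactly the intended one: the paper does not give a separate proof of this corollary, so it is meant to follow by plugging $C^*=\tau I$, $\Gamma=I$ into Theorem~\ref{prp:spectral_clustering}. Your scalar identifications $\lambda_K(C^*)=\tau$, $\|\Sigma\|_{op}=1+m\tau$, $\tr(\Sigma)=mK(1+\tau)$ are correct, and the two-regime split on $m\tau$ is the natural way to unwind \eqref{eq:spectral}.

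Two small fixes. First, your claim that $Re(\Sigma)\asymp K$ whenever $m\tau>1$ is not quite right (when $1/m<\tau<1$ one has $Re(\Sigma)\asymp K/\tau$). It is cleaner, and avoids this slip, to use the identity $\|\Sigma\|_{op}\sqrt{Re(\Sigma)}=\sqrt{\|\Sigma\|_{op}\tr(\Sigma)}=\sqrt{(1+m\tau)mK(1+\tau)}$ directly; the same conclusions $\rho\gtrsim K^2/n$ and $\tau\gtrsim K^2/(\rho n)$ then drop out with no subcases. Second, your treatment of the cross term $\tfrac{\sqrt{K\log p}}{m\sqrt{\rho n}}$ is more complicated than needed and contains a small error (the ``otherwise'' branch does not close). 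Simply observe that $\log p\leq p=mK$ always, hence
\[
\frac{\sqrt{K\log p}}{m\sqrt{\rho n}}\leq \frac{\sqrt{K\cdot mK}}{m\sqrt{\rho n}}=\frac{K}{\sqrt{\rho n m}}\ ,
\]
so this term is automatically dominated by the second alternative in \eqref{eq:condition_consistency_low_rank}, and the first condition on $\rho$ is not needed for this step. With these adjustments the argument goes through.
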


Recall that, as a benchmark, Corollary \ref{cor:consistency_2_steps2} above states that, when $\widehat{G}$ is obtained via the PECOK algorithm, and  if 
\beq \label{eq:consistency_SDP_C=I}
 \tau \gtrsim \sqrt{  \frac{K\vee \log p }{mn}   }+     \frac{\log(p)\vee K }{n}\ ,
\eeq
then  $\overline{L}(\widehat G,G^*) = 0$, or equivalently, $\widehat{G} = G^*$, with high probability. We can  therefore provide the following summary, for the simple case $C^*= \tau I$. \\

\noindent{\bf Summary: PECOK vs CSC when $C^*=\tau I$.}\\

\noindent {\bf  1.  $\rho$ is a user specified small value, independent of $n$ or $p$,  and the 
number of groups $K$ is either a constant or grows at most as $\log p$. } 
In this case,  the size of the cluster separation given by either  Condition \eqref{eq:condition_consistency_low_rank} and \eqref{eq:consistency_SDP_C=I}  are essentially the same, up to unavoidable $\log p$ factors. The difference is that, in this regime, CSC guarantees 
recovery up to a fixed, small,  fraction of mistakes, whereas PECOK guarantees exact recovery. \\

 \noindent {\bf  2.  $\rho \rightarrow 0 $. }
 Although perfect recovery, with high probability,  cannot be guaranteed for CSC, we could be close to it by requiring $\rho$ to be close to zero.  In this case,  the distinctions between Conditions \eqref{eq:condition_consistency_low_rank} and \eqref{eq:consistency_SDP_C=I} become much more pronounced. Notice that whereas the latter condition  is independent of $\rho$,  in the former there is a trade-off between the precision $\rho$ and the size of the 
cluster separation. Condition  \eqref{eq:consistency_SDP_C=I}  is the near-optimal separation condition that guarantees that $\rho = 0$ when PECOK is used.  However, if in \eqref{eq:condition_consistency_low_rank} we took, for instance,  $\rho$  to be proportional to  $K^2/n$,  whenever the latter is small, the cluster separation 
requirement  for CSC would become 
\[ \tau \gtrsim 1, \]
which is unfortunately  very far from the optimal minimax rate.  \medskip

The phenomena summarized above have already been observed in the analysis of spectral clustering algorithms for network clustering via the Stochastic Block Model (SBM), for instance in   \cite{LeiRinaldo}. When we move away from the case $C^*=\tau I$ discussed above, the sufficient condition \eqref{eq:spectral} of the general Theorem \ref{prp:spectral_clustering} for CSC compares unfavorably  with condition \eqref{eq:assumption2_2steps2}
of Corollary \ref{cor:consistency_2_steps2} for PECOK even when $\rho$ is a fixed value. 

  For instance, consider  $C^*=\tau I+\alpha J$, with $J$ being the matrix with all entries equal to one, and  $\Gamma=I$. Notice  that in this case we continue to have $\Delta(C^*)= 2\lambda_{K}(C^*)= 2\tau$. Then, for a given, fixed, value of $\rho$ 
 and $K$ fixed, condition \eqref{eq:spectral} of the general Theorem \ref{prp:spectral_clustering} guarantees 
   $\rho$-approximately correct clustering via CSC for  the cluster separation 
\[\tau \gtrsim \frac{\alpha\sqrt{\log(p)}}{\sqrt{n\rho}}\ ,\]
which is  independent of $m$, unlike the minimax cluster separation rate that we established in Theorem \ref{prp:minimax_lower_bound} above. Although we only compare sufficient conditions for partition estimation, this phenomenon further supports the merits  the  PECOK method proposed in this work.

\section{Extensions}\label{sec:discussion}

In this section we discuss briefly immediate generalizations of the framework presented above. 

First we note that we only assumed that $X$ is Gaussian in order to keep the notation as simple as possible.  All our arguments continue to hold if $X$ is sub-Gaussian,  in which case all the concentration inequalities used in our proofs can be obtained via appropriate applications of Hanson-Wright inequality~\cite{rudelson2013hanson}. Moreover, the bounds in operator norm between covariance matrices and their estimates continue to hold, 
at the price of an additional $\log (p)$ factor, under an extra assumption on the moments 
of $X$, as explained in section 2.1  of \cite{BuneaEtal}. 

We can also generalize slightly the modeling framework. If (\ref{eq:latent}) holds for $X$, we can alternatively assume that the error variances within a block are not equal. The implication is that 
in the decomposition of the corresponding $\Sigma$ the diagonal matrix $\Gamma$ will have arbitrary non-zero entries.  The characterization (\ref{correctconvex}) of $B^*$ that motivates PECOK  is unchanged, and so is the rest of the paper, with the added bonus that in Lemma \ref{lem:identifiable} the sufficient identifiability condition $\Delta(C^*) > 0$ also becomes necessary. 
We have preferred the set-up in which $\Gamma$ has equal diagonal entries per cluster  only to facilitate direct comparison with \cite{cord}, where this assumption is made.

\section{Proofs}\label{sec:proofs}

\subsection{Proof of Lemma~\ref{lem:identifiable}}
When (\ref{eq:latent}) holds, then we have $\Sigma=A C^* A^t+\Gamma^*$, with $A_{ak}=1_{a \in G^*_{k}}$. In particular, writing $k^*(a)$ for the integer such that $a\in G^*_{k^*(a)}$, we have 
$\Sigma_{ab}=C^*_{k^*(a)k^*(b)}$ for any $a\neq b$.

Let $a$ be any integer between 1 and $p$ and set
$$V(a)=\{a':a'\neq a,\ Cord(a,a')=0\},\quad \text{where}\quad Cord(a,a')=\max_{c\neq a,a'}|\widehat \Sigma_{ac}-\widehat \Sigma_{a'c}|.$$
We prove below that 
$V(a)=G^*_{k^*(a)}\setminus \{a\}$, and hence the partition $G^*$ is identifiable from $\Sigma$. 

First, if $a'\in  G^*_{k^*(a)}\setminus \{a\}$, then $k^*(a')=k^*(a)$, so
$$\Sigma_{ac}-\Sigma_{a'c}=C^*_{k^*(a)k^*(c)}-C^*_{k^*(a)k^*(c)}=0$$
for any $c\neq a,a'$. So $a'\in V(a)$ and hence $G^*_{k^*(a)}\setminus \{a\}\subset V(a)$.

Conversely, let us prove that $V(a) \subset G^*_{k^*(a)}\setminus \{a\}$. Assume that it is not the case, hence there exists $a'\in V(a)$ such that $k^*(a')\neq k^*(a)$. Since $m>1$ and $a'\notin G^*_{k^*(a)}$, we can find $b\in G^*_{k^*(a)}\setminus \{a\}$ and $b'\in G^*_{k^*(a')}\setminus \{a'\}$.
Since $a'\in V(a)$, we have $Cord(a,a')=0$ so
\begin{align*}
0&=\Sigma_{ab}-\Sigma_{a'b}=C_{k^*(a)k^*(a)}^*-C_{k^*(a)k^*(a')}^*\\
0&=\Sigma_{ab'}-\Sigma_{a'b'}=C^*_{k^*(a)k^*(a')}-C^*_{k^*(a')k^*(a')}.
\end{align*}
In particular, we have
$$\Delta(C^*)\leq C^*_{k^*(a)k^*(a)}+C^*_{k^*(a')k^*(a')}-2C^*_{k^*(a)k^*(a')}=0,$$
which is in contradiction with $\Delta(C^*)>0$.
So it cannot hold that $k^*(a')\neq k^*(a)$, which means that any $a'\in V(a)$ belongs to $G^*_{k^*(a)}\setminus \{a\}$, i.e. $V(a) \subset G^*_{k^*(a)}\setminus \{a\}$. This conclude the proof of the equality $V(a)=G^*_{k^*(a)}\setminus \{a\}$ and the proof of the Lemma is complete. \\

{\it In order to avoid notational clutter in the remainder of the paper, we re-denote $C^*$ by $C$ and $G^*$ by $G$.}  \\

\subsection{ Proofs of Proposition  \ref{prp:pop_algoC}, Proposition \ref{prp:pop_algo}  and Corollary \ref{represent} }

Since ${\cal D}\subset {\cal C}$, and $B^* \in \mathcal{D}$, the proofs of Proposition \ref{prp:pop_algo}  and Corollary \ref{represent} follow from the proof of Proposition  \ref{prp:pop_algoC}, given below.  The basis of the proof of Proposition  \ref{prp:pop_algoC} is the following Lemma.

\begin{lem}\label{lem:support}
 The collection $\cC$ contains only one matrix whose support is included in $\mathrm{supp}(B^*)$, that is 
 \[
  \cC\cap \big\{B,\ \mathrm{supp}(B)\subset \mathrm{supp}(B^*)\big\}= \{B^*\}\ .
 \]
\end{lem}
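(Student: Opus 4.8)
The claim is that within the convex set $\mathcal{C}$, the only matrix whose support is contained in $\mathrm{supp}(B^*)$ is $B^*$ itself. First I would unpack what $\mathrm{supp}(B^*) \subset \mathrm{supp}(B^*)$ means concretely: $B^*$ is block-diagonal (after reordering indices by group), with the block for group $G_k$ being the constant matrix $|G_k|^{-1}\mathbf{1}\mathbf{1}^t$. So any $B \in \mathcal{C}$ with support inside $\mathrm{supp}(B^*)$ is itself block-diagonal with the same block pattern: $B = \mathrm{diag}(B^{(1)}, \ldots, B^{(K)})$ where $B^{(k)}$ is a $|G_k| \times |G_k|$ matrix supported on the full block (or a sub-block). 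The plan is then to show each $B^{(k)}$ is forced to equal $|G_k|^{-1}\mathbf{1}\mathbf{1}^t$.

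The key step is to exploit the constraints defining $\mathcal{C}$ restricted to a single block. On block $k$, the matrix $B^{(k)}$ must be symmetric PSD, have nonnegative entries, and — crucially — each \emph{column} must sum to $1$ (this comes from $\sum_a B_{ab}=1$ for all $b$, since the off-block entries in that column vanish). So $B^{(k)}$ is a symmetric PSD doubly stochastic matrix on $|G_k|$ indices. I would argue as follows: being symmetric doubly stochastic, $B^{(k)}\mathbf{1} = \mathbf{1}$, so $\mathbf{1}$ is an eigenvector with eigenvalue $1$; and since $B^{(k)}$ is doubly stochastic its operator norm (equivalently its top eigenvalue, as it is symmetric PSD) is at most $1$. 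Hence $\mathrm{tr}(B^{(k)}) \le \mathrm{rank\text{-}weighted}$ bound; more precisely the sum of eigenvalues is at most... — here I need the trace constraint. The global constraint $\mathrm{tr}(B) = K$ gives $\sum_k \mathrm{tr}(B^{(k)}) = K$. Since each $B^{(k)}$ is symmetric PSD with all eigenvalues in $[0,1]$ and has $\mathbf{1}$ as a unit-eigenvalue eigenvector, we get $\mathrm{tr}(B^{(k)}) \ge 1$, hence equality $\mathrm{tr}(B^{(k)}) = 1$ for every $k$. But $\mathrm{tr}(B^{(k)})=1$ together with all eigenvalues in $[0,1]$ and the eigenvalue $1$ already present forces all \emph{other} eigenvalues to be $0$, i.e. $B^{(k)}$ has rank one, so $B^{(k)} = \mathbf{1}\mathbf{1}^t / |G_k|$ (the unique rank-one symmetric doubly stochastic matrix). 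Assembling the blocks gives $B = B^*$.

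The main obstacle, and the step to handle carefully, is the very first reduction: showing that support containment forces the \emph{block-diagonal} structure with no "leakage," and then correctly transferring the column-sum constraint and the PSD property to each diagonal block (a principal submatrix of a PSD matrix is PSD, so that direction is fine, but I should note that a priori $B^{(k)}$ could be supported on a strict subset of the block — this is harmless since the argument above only used column sums, nonnegativity, and the PSD/trace bookkeeping, which all still go through). I would also double-check the inequality $\mathrm{tr}(B^{(k)}) \ge 1$: it follows because for a symmetric PSD matrix, $\mathrm{tr}(B^{(k)}) = \sum_i \lambda_i \ge \lambda_{\max}(B^{(k)}) \ge \frac{\mathbf{1}^t B^{(k)} \mathbf{1}}{|\mathbf{1}|_2^2} = \frac{|G_k|}{|G_k|} = 1$, using that row sums are $1$. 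Once these bookkeeping points are nailed down, the conclusion $\mathcal{C} \cap \{B : \mathrm{supp}(B) \subset \mathrm{supp}(B^*)\} = \{B^*\}$ is immediate.
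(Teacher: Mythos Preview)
Your proposal is correct and follows essentially the same argument as the paper: reduce to the diagonal blocks $B_{G_kG_k}$, observe that each is symmetric positive semidefinite and doubly stochastic, use $\tr(B_{G_kG_k})\geq \|B_{G_kG_k}\|_{op}\geq \mathbf{1}^tB_{G_kG_k}\mathbf{1}/|G_k|=1$ together with $\sum_k \tr(B_{G_kG_k})=K$ to force $\tr(B_{G_kG_k})=1$, and conclude that each block is rank one and equal to $\mathbf{1}\mathbf{1}^t/|G_k|$. Your write-up is in fact slightly more explicit than the paper's (you spell out that $\mathbf{1}$ is an eigenvector with eigenvalue $1$ and that all eigenvalues lie in $[0,1]$), but the logic is identical.
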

\begin{proof}
Consider any matrix $B\in \cC$ whose support is included in $\mathrm{supp}(B^*)$. Since $B1=1$, it follows that each submatrix $B_{G_kG_k}$ is symmetric doubly stochastic. Since  $B_{G_kG_k}$ is also positive semidefinite, we have \[tr(B_{G_kG_k})\geq \|B_{G_kG_k}\|_{op}\geq 1^tB_{G_kG_k}1/|G_k|=1\ . \]
As $B\in \cC$, we have $\tr(B)= K$, so all the submatrices $B_{G_kG_k}$ have a unit trace. Since $\|B_{G_kG_k}\|_{op}\geq 1$, this also enforces that $B_{G_kG_k}$ contains only one non-zero eigenvalue and that a corresponding eigenvector is the constant vector $1$. As a consequence, $B_{G_kG_k}=11^t/|G_k|$ for all $k=1,\ldots,K$ and $B=B^*$. 
\end{proof}

As a consequence of Lemma~\ref{lem:support}, we only need to prove that,     
\[ \langle {\Sigma}, B^*-B \rangle  > 0, \  \mbox{ for all} \  B \in \mathcal{C}\text{ such that } \ \mathrm{supp}(B)\nsubseteq \mathrm{supp}(B^*).\]
 We have 
$$ \langle {\Sigma}, B^*-B \rangle=\langle ACA^t,B^*-B\rangle + \langle \Gamma , B^*-B \rangle.$$
Define the $p$-dimensional vector $v$ by $v=diag(ACA^t)$. 
Since $B1=1$ for all $B\in\cal{C}$, we have $\langle v1^t+1v^t,B^*-B\rangle=0$. Hence, we have
\begin{align}
\langle ACA^t,B^*-B\rangle&=\langle ACA^t-{1\over 2}(v1^t+1v^t),B^*-B\rangle\nonumber\\
&= \sum_{j,k}\sum_{a\in G_{j},\,b\in G_{k}}  \left(C_{jk}-{C_{jj}+C_{kk}\over 2}\right)(B^*_{ab}-B_{ab})\nonumber\\
&= \sum_{j\neq k}\sum_{a\in G_{j},\,b\in G_{k}}  \left({C_{jj}+C_{kk}\over 2}-C_{jk}\right)B_{ab}\nonumber\\
&= \sum_{j\neq k} \left({C_{jj}+C_{kk}\over 2}-C_{jk}\right) |B_{G_jG_k}|_1,\label{eq:signalpop}
\end{align}
where $B_{G_jG_k}=[B_{ab}]_{a\in G_{j},\, b\in G_{k}}$.
Next lemma lower bounds $\langle \Gamma , B^*-B \rangle$.
\begin{lem}\label{lem:control_Gamma}
\beq\label{eq:control_gamma}
\langle \Gamma , B^*-B \rangle \geq - \frac{\max_k \gamma_k -\min_k \gamma_k}{m}\sum_{k\neq j}|B_{G_jG_k}|_1\ .
\eeq
\end{lem}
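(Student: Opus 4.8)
The plan is to evaluate $\langle \Gamma, B^*-B\rangle$ explicitly through the diagonal of $B$, and then to use the positive semidefiniteness of $B$ together with the constraints $B\1=\1$ and $\tr(B)=K$ to control how far the in-block diagonal mass of $B$ can fall below that of $B^*$. Concretely, since $\Gamma$ is diagonal with $\Gamma_{aa}=\gamma_k$ for $a\in G_k$ and $B^*_{aa}=1/|G_k|$ there, we have $\sum_{a\in G_k}B^*_{aa}=1$, so writing $s_k:=\sum_{a\in G_k}B_{aa}=\tr(B_{G_kG_k})$ gives
\[
\langle \Gamma, B^*-B\rangle=\sum_{k=1}^K\gamma_k(1-s_k).
\]
Because $\tr(B)=K=\sum_k 1$ we get $\sum_k(1-s_k)=0$, so we may replace $\gamma_k$ by $\gamma_k-\min_j\gamma_j\ge 0$ in the sum without changing its value.

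The key step is a one-sided bound on the in-block diagonal deficit. Fix $k$, put $m_k=|G_k|$, and set $R_k:=\sum_{j\ne k}|B_{G_jG_k}|_1=\sum_{a\in G_k}\sum_{b\notin G_k}B_{ab}\ge 0$, using that $B$ has nonnegative entries and $B=B^t$. From $B\1=\1$ we get $\1^tB_{G_kG_k}\1=\sum_{a,b\in G_k}B_{ab}=m_k-R_k$, while $B_{G_kG_k}\succcurlyeq 0$ as a principal submatrix of a positive semidefinite matrix, so $\tr(B_{G_kG_k})\ge\|B_{G_kG_k}\|_{op}\ge \1^tB_{G_kG_k}\1/m_k$. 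Hence $s_k\ge 1-R_k/m_k\ge 1-R_k/m$, that is $1-s_k\le R_k/m$.

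To conclude, split $\sum_k(\gamma_k-\min_j\gamma_j)(1-s_k)$ according to the sign of $1-s_k$: the terms with $1-s_k\ge 0$ contribute nonnegatively, and for the rest I bound, using the previous step and $\sum_k(1-s_k)=0$, the quantity $\sum_{k:\,s_k>1}(s_k-1)=\sum_{k:\,s_k<1}(1-s_k)\le \sum_k R_k/m=\tfrac1m\sum_{k\ne j}|B_{G_jG_k}|_1$. Therefore
\[
\langle \Gamma, B^*-B\rangle=\sum_k(\gamma_k-\min_j\gamma_j)(1-s_k)\ge -(\max_j\gamma_j-\min_j\gamma_j)\sum_{k:\,s_k>1}(s_k-1)\ge -\frac{\max_k\gamma_k-\min_k\gamma_k}{m}\sum_{k\ne j}|B_{G_jG_k}|_1,
\]
which is the claimed inequality. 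The only nontrivial point is the one-sided bound $1-s_k\le R_k/m$: one must notice that the PSD-plus-unit-row-sum constraints pin the in-block diagonal mass down from below (not from above), and that this is exactly what is needed, since after centering by $\min_j\gamma_j$ only the negative excursions of $1-s_k$ survive, and those are paid for by the deficits of the other blocks through $\sum_k(1-s_k)=0$. Everything else is bookkeeping with entrywise nonnegativity and symmetry of $B$, mirroring the operator-norm argument already used in the proof of Lemma~\ref{lem:support}.
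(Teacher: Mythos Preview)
Your proof is correct and follows essentially the same route as the paper. The only cosmetic difference is the centering constant: the paper subtracts $|\Gamma|_\infty=\max_k\gamma_k$ (so that $\gamma_k-\max_j\gamma_j\le 0$ and the negative contributions come directly from the blocks with $1-s_k>0$, which are then bounded by $[1-s_k]_+\le R_k/m$), whereas you subtract $\min_k\gamma_k$ and then use the zero-sum identity $\sum_k(1-s_k)=0$ to transfer the mass from the blocks with $s_k>1$ back to those with $s_k<1$ before applying the same PSD bound. Both arguments rest on the identical key inequality $\tr(B_{G_kG_k})\ge \1^tB_{G_kG_k}\1/|G_k|$ and yield the same conclusion.
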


\begin{proof}[Proof of Lemma \ref{lem:control_Gamma}]
By definition of $B^*$ and since $\tr(B)=\tr(B^*)=K$, we have 
\begin{align} 
\langle \Gamma , B^*-B \rangle&=\langle \Gamma -|\Gamma|_{\infty}I, B^*-B \rangle\nonumber\\
&=  \sum_{k=1}^K (\gamma_{k}-|\Gamma|_{\infty})\Big[1- \tr(B_{G_kG_k})\Big] \nonumber\\
& \geq  - (\max_k \gamma_k -\min_k \gamma_k )\sum_{k=1}^K \Big[1- \tr(B_{G_kG_k})\Big]_{+} \label{eq:gamma}
\end{align}
Since $B_{G_kG_k}$ is positive semidefinite, we have 
\[0\leq 1^t B_{G_kG_k} 1/|G_k|\leq \|B_{G_kG_k}\|_{op}\leq \tr(B_{G_kG_k})\ .\] As a consequence, $1-\tr(B_{G_kG_k})\leq 1 - 1^t B_{G_kG_k} 1/|G_k|$. Since  $B1=1$, we conclude that 
\[
 \big[1-\tr(B_{G_kG_k})\big]_{+}\leq 1 - {1^t B_{G_kG_k} 1\over |G_k|}=\frac{1}{|G_k|}\sum_{j:j\neq k}|B_{G_jG_k}|_1.
\]
Coming back to \eqref{eq:gamma}, this gives us
\[
\langle \Gamma , B^*-B \rangle \geq - \frac{\max_k \gamma_k -\min_k \gamma_k}{m}\sum_{k\neq j}|B_{G_jG_k}|_1\ .
\]
\end{proof}

Hence, combining (\ref{eq:signalpop}) and Lemma~\ref{lem:control_Gamma}, we obtain
$$ \langle {\Sigma}, B^*-B \rangle\geq \sum_{j\neq k} \left({C_{jj}+C_{kk}\over 2}-C_{jk}- \frac{\max_k \gamma_k -\min_k \gamma_k}{m}\right) |B_{G_jG_k}|_1.$$
The condition~(\ref{condition:pop}) enforces that if $\mathrm{supp}(B)\nsubseteq \mathrm{supp}(B^*)$ then   $\langle {\Sigma}, B^*-B \rangle  > 0$. This concludes the proof of  (\ref{condition:popC}) and  (\ref{correctconvex}).
The proof of Proposition  \ref{prp:pop_algoC} is complete.

 \subsection{Proof of Proposition \ref{prp:counter_example}}
  By symmetry, we can assume that the true partition matrix $B^*$ is diagonal block constant. Define the partition matrix $B_1:= \left[{\scriptsize \begin{array}{ccc} 2/m & 0 & 0 \\
  0& 2/m & 0 \\
  0& 0 & 1/(2m)
  \end{array}}\right]$ where the first two blocks are of size $m/2$ and the the last block has size $2m$. The construction  of the  matrix $B_1$ amounts to merging  groups $G_2$ and $G_3$,  and to splitting $G_1$ into  two groups of equal size. Then, 
  \[\langle  \Sigma, B^*\rangle= \gamma_{+}+2\gamma_{-}+ m tr(C)\ , \quad \quad \langle  \Sigma, B_1\rangle= 2\gamma_+ + \gamma_-+ m tr(C)-m\tau\ .\]
  As a consequence, $\langle  \Sigma, B_1\rangle < \langle  \Sigma, B^*\rangle$ if and only if $\tau> \frac{\gamma_+- \gamma_-}{m}$.

\subsection{Proof of Theorem \ref{thm:consistency}}
As a consequence of Lemma~\ref{lem:support} page \pageref{lem:support}, when  $\widehat{B}$ is given by (\ref{SDP1}), we only need to prove that \begin{equation}\label{toshow}  \langle \widehat{\Sigma}-\widehat\Gamma, B^*-B \rangle  > 0, \  \mbox{ for all} \  B \in \mathcal{C}\text{ such that } \ \mathrm{supp}(B)\nsubseteq \mathrm{supp}(B^*), \end{equation}
 with high probability. \\

\noindent We begin by recalling the notation:  ${\bf X}$ denotes the $n \times p$ matrix of observations. 
Similarly, ${\bf Z}$ stands for the $n \times K$ matrix corresponding to the un-observed latent variables and  ${\bf E}$ denotes the $n \times p$ error matrix defined by $\bX=\bZ A^t+\bE$. \\

\noindent Our first goal is to decompose $ \widehat{\Sigma}-\widehat\Gamma$ in such a way that the distance $|\bZ_{:k}-\bZ_{:j}|_2^2$ becomes evident, as this is the empirical counter-part of  the key quantity $Var(Z_j - Z_k) =[C_{jj}+ C_{kk}-2C_{jk}]$ which informs cluster separation.  To this end, recall that  $n\widehat{\Sigma}= \bX^t \bX$ and let 
 $\widetilde\Gamma = \frac{1}{n}{\bf E^t E}$. Using the latent model representation, we further have 
 \[  n\widehat{\Sigma}       =  A {\bZ}^t \bZ A^t + n\widetilde\Gamma + A({\bf Z^t E}) + ({\bf E^t Z})A^t.  \] 
Using the fact that for any vectors $v_1$ and $ v_2$ we have $|v_1 - v_2|^2_2 = |v_1|_2^2  + |v_2|^2 - 2<v_1, v_2>$, 
we can write
\[ [ A {\bZ}^t \bZ A^t ]_{ab} =  \frac{1}{2} |[A{\bf Z}^t]_{a:}|_2^2 +  \frac{1}{2} |[A{\bf Z}^t]_{b:}|_2^2  -\frac{1}{2} |[A{\bf Z}^t]_{a:}- [A{\bf Z}^t]_{b:}|_2^2,\]
for any  $1\leq a,b\leq p$. We also observe that 

\[  [ A({\bf Z^t E}) + ({\bf E^t Z})A^t] _{ab} =  [\bE^t_{b:}- \bE^t_{a:}][( A\bZ ^t)_{a:}- (A\bZ^t )_{b:}] + [A\bZ^t\bE]_{aa} + 
 [A\bZ^t\bE]_{bb} . \]
  Define the $p\times p$ matrix $W$ by 
\beq\label{eq:definition_W}
W_{ab}:= n(\widehat{\Sigma}_{ab} -\widehat\Gamma_{ab})- \frac{1}{2} |[A{\bf Z}^t]_{a:}|_2^2 - \frac{1}{2} |[A{\bf Z}^t]_{b:}|_2^2 -  [A{\bf Z^t E}]_{aa}- [A{\bf Z^t E}]_{bb}. 
\eeq
Combining the  four displays above  we have 

\begin{equation} \label{dec} W=W_1+W_2+ n(\widetilde\Gamma-\widehat\Gamma), \end{equation}   with 
\beq\label{eq:definition_W_decomposion}
(W_{1})_{ab}:= -\frac{1}{2} |[A{\bf Z}^t]_{a:}- [A{\bf Z}^t]_{b:}|_2^2, \ \  \ \ (W_2)_{ab}:= [\bE^t_{b:}- \bE^t_{a:}][(A\bZ^t )_{a:}- (A\bZ^t)_{b:}], 
\eeq
for any $1\leq a,b\leq p$.  Observe that $W-n(\widehat{\Sigma}-\widehat\Gamma)$ is a sum of four matrices, two 
of which are of the type $1v_1^t$, and two of the type $v_21^t$, for some vectors $v_1, v_2  \in \cR^p$. Since for any two matrices $B_1$ and $B_2$ in $\cC$, we have  $B_1 1= B_21=1$, it follows that 
\[
 \langle W-n(\widehat{\Sigma}-\widehat\Gamma), B_1-B_2\rangle = 0  \ .
\]
As a consequence and using the decomposition (\ref{dec}),  proving (\ref{toshow}) reduces to proving 
\beq\label{eq:main_objective}
\langle W_1 + W_2 + n(\widetilde\Gamma-\widehat\Gamma), B^*-B \rangle  > 0,  \  \mbox{ for all} \  B \in \mathcal{C}\text{ such that } \ \mathrm{supp}(B)\nsubseteq \mathrm{supp}(B^*).
\eeq
We will analyze the inner product between $B^* - B$ with each of the three matrices in (\ref{eq:main_objective}) separately. \\

The matrix $W_{1}$ contains the information about the clusters, as we explain below. Note that for two variables $a$ and $b$ belonging to the same group, $(W_{1})_{ab}=0$ and for two variables $a$ and $b$ belonging to different groups $G_j$ and $G_k$, $(W_{1})_{ab}=-|\bZ_{:i}- \bZ_{:k}|_2^2/2$. As a consequence, $\langle W_1, B^*\rangle=0$ and 
\[
\langle W_1, B^*-B \rangle =  {1\over 2} \sum_{j\neq k}|\bZ_{:j}-\bZ_{:k}|_2^2 \sum_{a\in G_j,\ b\in G_k}B_{ab}.  \]
In the sequel, we  denote by $B_{G_j,G_k}$ the submatrix $(B_{ab})_{a\in G_j,\,b\in G_k}$. Since all the entries of $B$ are nonnegative, 
\beq\label{eq:main1}
\langle W_1, B^*-B \rangle =  {1\over 2} \sum_{j\neq k}|\bZ_{:j}-\bZ_{:k}|_2^2 |B_{G_jG_k}|_1\ . 
\eeq

\noindent We will analyze below the two remaining cross products. As we shall control the same quantities $\langle W_2,B^*-B\rangle$ and $\langle \widetilde\Gamma-\widehat\Gamma, B^*-B\rangle$ for $B$ in the  larger class $\cC_0$ given by \eqref{definition_C0} in the proof of Theorem \ref{thm:Kselection}, we state the two following lemmas for $ B  \in \cC_0$. Their proofs are given after the proof of this theorem.

\begin{lem}\label{lem:W2}
With probability larger than $1-c'_0/p$, it holds that
 \beq\label{eq:main3}
|\langle W_2, B^*-B \rangle|\leq c_1 \sqrt{\log(p)}\sum_{j\neq k}|\Gamma|_{\infty}^{1/2} |\bZ_{:j}-\bZ_{:k}|_2 |B_{G_jG_k}|_1\ ,
\eeq 
simultaneously over all matrices $B\in \cC_0$.  
\end{lem}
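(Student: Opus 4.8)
The plan is to condition on the latent matrix $\bZ$ and reduce the bound to a Gaussian maximal inequality for the error columns of $\bE$, which are independent of $\bZ$. All the "clustering geometry" sits in the factor $|\bZ_{:j}-\bZ_{:k}|_2$, and the role of $\bE$ is purely that of noise projected onto the $\bZ$-measurable directions $\bZ_{:j}-\bZ_{:k}$.

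First I would rewrite $(W_2)_{ab}$ in the more transparent form $(W_2)_{ab}=\langle \bE_{:b}-\bE_{:a},\ \bZ_{:k(a)}-\bZ_{:k(b)}\rangle$, where $k(a)$ denotes the group of $a$, using $(A\bZ^t)_{a:}=\bZ_{:k(a)}$. Two observations follow at once: $(W_2)_{ab}=0$ whenever $a\stackrel{G}{\sim}b$, so that $\langle W_2,B^*\rangle=0$; and, since every $B\in\cC_0$ has nonnegative entries,
\[
|\langle W_2,B^*-B\rangle|=|\langle W_2,B\rangle|\leq \sum_{j\neq k}\ \sum_{a\in G_j,\,b\in G_k}B_{ab}\Bigl(|\langle \bE_{:a},\bZ_{:j}-\bZ_{:k}\rangle|+|\langle \bE_{:b},\bZ_{:j}-\bZ_{:k}\rangle|\Bigr).
\]
Thus it suffices to exhibit a single event of probability at least $1-c'_0/p$, \emph{not depending on $B$}, on which $|\langle \bE_{:a},\bZ_{:j}-\bZ_{:k}\rangle|\leq c_1\sqrt{|\Gamma|_{\infty}\log p}\;|\bZ_{:j}-\bZ_{:k}|_2$ simultaneously over all $a\in[p]$ and all pairs $j\neq k$; plugging this into the display and using $|B_{G_jG_k}|_1=\sum_{a\in G_j,\,b\in G_k}B_{ab}$ then gives \eqref{eq:main3} after renaming the constant.

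To get the pointwise bound I would condition on $\bZ$ and set $w_{jk}:=(\bZ_{:j}-\bZ_{:k})/|\bZ_{:j}-\bZ_{:k}|_2$ (with the convention $0/0=0$), which is $\bZ$-measurable. Because $E$ is independent of $Z$ and the columns $\bE_{:a}$ are independent with $\bE_{:a}\sim N(0,\Gamma_{aa}I_n)$, conditionally on $\bZ$ the variables $\langle \bE_{:a},w_{jk}\rangle$ are centered Gaussian with variance $\Gamma_{aa}\leq|\Gamma|_{\infty}$. A standard Gaussian tail bound together with a union bound over the $p$ indices $a$ and the at most $\binom{K}{2}\leq p^2$ pairs $(j,k)$ then yields, for a suitable numerical constant $c_1$,
\[
\P\Bigl(\max_{j\neq k}\max_{a\in[p]}|\langle \bE_{:a},w_{jk}\rangle|>c_1\sqrt{|\Gamma|_{\infty}\log p}\ \Big|\ \bZ\Bigr)\leq \frac{c'_0}{p},
\]
and, being uniform in $\bZ$, this bound also holds unconditionally. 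Multiplying through by $|\bZ_{:j}-\bZ_{:k}|_2$ recovers the required pointwise estimate (the degenerate case $\bZ_{:j}=\bZ_{:k}$ being harmless, since then both sides vanish).

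The argument is essentially routine; the one point that needs care is the order of conditioning: the normalized directions $w_{jk}$ must be frozen via the conditioning on $\bZ$ \emph{before} the Gaussian tail bound for $\bE$ is invoked, and one must observe that the resulting good event depends only on $\bE$ and $\bZ$, not on $B$, which is exactly what makes the conclusion simultaneous over the whole class $\cC_0$ (so that the same estimate can be reused verbatim in the proof of Theorem~\ref{thm:Kselection}). No separation assumption on the clusters is used anywhere.
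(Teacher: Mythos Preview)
Your proof is correct and follows essentially the same approach as the paper: condition on $\bZ$, observe that $(W_2)_{ab}$ vanishes on the diagonal blocks so that $\langle W_2,B^*\rangle=0$, apply a Gaussian tail bound with a union bound over indices, and conclude using $B_{ab}\geq 0$. The only cosmetic difference is that the paper bounds $(W_2)_{ab}=\langle \bE_{:b}-\bE_{:a},\bZ_{:j}-\bZ_{:k}\rangle$ directly as a single conditional Gaussian with variance $|\bZ_{:j}-\bZ_{:k}|_2^2(\gamma_j+\gamma_k)$ and takes the union over pairs $(a,b)$, whereas you split via the triangle inequality into two terms $|\langle \bE_{:a},\cdot\rangle|+|\langle \bE_{:b},\cdot\rangle|$ and take the union over $a$ and over pairs $(j,k)$; both routes give the same bound up to constants.
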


\noindent It remains to control the term corresponding to the empirical covariance matrix of the noise $\bE$. This is the main technical difficulty in this proof.

\begin{lem}\label{lem:covariance}
With probability larger than $1-c_0/p$, it holds that
\begin{eqnarray}\label{main2-bis}
n|\langle \widetilde\Gamma-\widehat\Gamma, B^*-B\rangle|&\leq  &c_2 \left[| \Gamma|_{\infty} \left( \sqrt{  n\frac{\log p}{m}   }  \vee    \sqrt {\frac{np}{m^2}} \vee \frac{p}{m}\right)  + {n |\widehat\Gamma-\Gamma|_{V}\over m}     \right] \sum_{j\neq k }|B_{G_jG_k}|_1\\ &&+ 4|\Gamma|_{\infty}\left[\sqrt{\frac{p}{n}}\vee \frac{p}{n}\right](\tr(B)-K)+ [\tr(B)-K]_{+}| \Gamma-\widehat\Gamma|_{V}, \nonumber
\end{eqnarray}
simultaneously over all matrices $B\in \cC_0$.  
\end{lem}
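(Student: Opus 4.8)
The plan is to split $\widetilde\Gamma-\widehat\Gamma=(\widetilde\Gamma-\Gamma)+(\Gamma-\widehat\Gamma)$ and bound the two pieces separately: $\Gamma-\widehat\Gamma$ is (conditionally) deterministic, while $\widetilde\Gamma-\Gamma=\tfrac1n\bE^t\bE-\Gamma$ is the genuinely random quadratic form in the noise and carries all the difficulty. For the first piece I would use that $\Gamma-\widehat\Gamma$ is diagonal: writing $\Gamma-\widehat\Gamma=(\Gamma-\widehat\Gamma-\lambda I)+\lambda I$ with $\lambda$ the midpoint of its diagonal entries gives $|\Gamma-\widehat\Gamma-\lambda I|_\infty=\tfrac12|\Gamma-\widehat\Gamma|_V$ and $\langle I,B^*-B\rangle=K-\tr(B)$; then, splitting $B^*-B$ into its block-diagonal part (supported on pairs $a\stackrel{G}{\sim}b$) and its complement and using, exactly as in the proof of Lemma~\ref{lem:control_Gamma}, the bound $[1-\tr(B_{G_kG_k})]_+\le|G_k|^{-1}\sum_{j\ne k}|B_{G_jG_k}|_1$ together with $m=\min_k|G_k|$, one recovers the summands $\tfrac1m|\Gamma-\widehat\Gamma|_V\sum_{j\ne k}|B_{G_jG_k}|_1$ and $[\tr(B)-K]_+|\Gamma-\widehat\Gamma|_V$ of \eqref{main2-bis}.

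For the random piece, set $M:=\tfrac1n\bE^t\bE-\Gamma$, which does not depend on $B$, and first remove its dominant rank-one mode: since $B^*\mathbf 1=B\mathbf 1=\mathbf 1$ for all $B\in\mathcal C_0$, we have $\langle v\mathbf 1^t+\mathbf 1 v^t,\,B^*-B\rangle=0$ for every $v$, so in $\langle M,B^*-B\rangle$ we may freely subtract from $M$ any rank-one term of that form and any scalar multiple $\lambda I$ at the cost of the explicit term $\lambda(K-\tr B)$. I would then decompose $M$ into its diagonal part $M^{\mathrm d}$ and off-diagonal part $M^{\mathrm o}$, and $B^*-B$ into its block-diagonal part and its between-group part $B-\Pi B$ ($\Pi$ being the operator that zeroes out across-group entries), obtaining four inner products. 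The $M^{\mathrm d}$ contribution is handled like the $\Gamma-\widehat\Gamma$ piece, once one notes that Laurent--Massart concentration of the $\chi^2$ variables $\tfrac1n|\bE_{:a}|_2^2$ plus a union bound over $a\in[p]$ give $|M^{\mathrm d}|_V\le2|M^{\mathrm d}|_\infty\lesssim|\Gamma|_\infty(\sqrt{\log p/n}+\log p/n)$ on an event of probability $\ge1-c/p$; after multiplying by $n$ this yields the first and third summands of \eqref{main2-bis}. The within-group off-diagonal terms $\langle M^{\mathrm o}_{G_kG_k},\,B^*_{G_kG_k}-B_{G_kG_k}\rangle$ are bounded by $\|M^{\mathrm o}_{G_kG_k}\|_{op}\|B^*_{G_kG_k}-B_{G_kG_k}\|_*$, the nuclear norm being controlled again through $[1-\tr(B_{G_kG_k})]_+\le|G_k|^{-1}\sum_{j\ne k}|B_{G_jG_k}|_1$ and the non-negativity of $B$; combining this with the operator-norm bound $\|\tfrac1n\bE^t\bE-\Gamma\|_{op}\lesssim|\Gamma|_\infty(\sqrt{p/n}+p/n)$ (a Bai--Yin/covariance-estimation estimate, cf.\ \cite{BuneaEtal}) and with the leftover scalar term $\lambda(K-\tr B)$ taken with $\lambda\asymp\|M\|_{op}$ produces the $4|\Gamma|_\infty(\sqrt{p/n}\vee p/n)(\tr B-K)$ term and the $p/m$, $\sqrt{np/m^2}$ entries of the leading factor.

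The crux --- and the step I expect to be the main obstacle --- is the between-group contribution $\langle M^{\mathrm o},\,\Pi B-B\rangle=-\sum_{j\ne k}\langle M_{G_jG_k},B_{G_jG_k}\rangle$. The naive entrywise estimate $|\langle M^{\mathrm o},B-\Pi B\rangle|\le\max_{a\ne b}|M_{ab}|\sum_{j\ne k}|B_{G_jG_k}|_1$, with $\max_{a\ne b}|M_{ab}|\lesssim|\Gamma|_\infty(\sqrt{\log p/n}+\log p/n)$, is not sharp enough to reach the claimed rate; to get the $\sqrt{n\log p/m}$ term I would instead pair operator-norm control of the off-diagonal Wishart blocks --- bounding each $\|M_{G_jG_k}\|_{op}$, $j\ne k$, by $|\Gamma|_\infty\sqrt{m/n}$ up to lower-order and sub-exponential corrections, uniformly over the $O(K^2)$ pairs via a union bound --- with nuclear-norm bounds on the corresponding blocks of $B$, crucially exploiting that $B_{G_jG_k}$ is a non-negative doubly sub-stochastic block whose nuclear norm is much smaller than its entrywise $\ell_1$ norm; balancing the resulting two contributions is what produces the three-way maximum $\sqrt{n\log p/m}\vee\sqrt{np/m^2}\vee p/m$ multiplying $\sum_{j\ne k}|B_{G_jG_k}|_1$. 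Finally, uniformity over $\mathcal C_0$ comes for free: every $B$-dependent factor above is one of the fixed functionals $\sum_{j\ne k}|B_{G_jG_k}|_1$, $\tr(B)-K$, $[\tr(B)-K]_+$, whereas the random coefficients in front of them ($|M^{\mathrm d}|_V$ and the block and global operator norms of $M$) do not involve $B$, so intersecting the finitely many high-probability events above gives \eqref{main2-bis} simultaneously over all $B\in\mathcal C_0$, with no covering or chaining argument.
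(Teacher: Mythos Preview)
Your treatment of the deterministic piece $\Gamma-\widehat\Gamma$ is correct and essentially identical to the paper's. The gap is in the random piece $M=\widetilde\Gamma-\Gamma$, precisely at the step you flag as the crux.

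The claim that the between-group blocks $B_{G_jG_k}$ have ``nuclear norm much smaller than their entrywise $\ell_1$ norm'' is false in general, and without it your operator-norm/nuclear-norm pairing on those blocks cannot deliver the $\sqrt{n\log p/m}$ rate. Counterexample: take $B_{G_jG_k}=\epsilon\, e_1e_1^t$ (one nonzero entry). Then $\|B_{G_jG_k}\|_*=|B_{G_jG_k}|_1=\epsilon$. More generally, if $B_{G_jG_k}$ is a scaled sub-permutation matrix, nuclear norm and $\ell_1$ norm coincide. So there is no uniform factor $1/\sqrt m$ to be extracted from the $B$ side, and the block operator norms $\|M_{G_jG_k}\|_{op}$ scale like $|\Gamma|_\infty\sqrt{(|G_j|+|G_k|)/n}$ rather than $\sqrt{m/n}$ when groups are unbalanced, compounding the problem. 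Your within-block off-diagonal step has a related issue: $\|B^*_{G_kG_k}-B_{G_kG_k}\|_*$ is not controlled by $[1-\tr(B_{G_kG_k})]_+$.

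The paper does not split $M$ diagonal/off-diagonal. Instead it uses the projection identity
\[
M = B^*M + MB^* - B^*MB^* + (I-B^*)M(I-B^*),
\]
together with $(I-B^*)B^*=0$, to get
\[
|\langle M, B^*-B\rangle|\le 3\,|B^*M|_\infty\,|B^*(B^*-B)|_1 \;+\; \|M\|_{op}\,\|(I-B^*)B(I-B^*)\|_*\,.
\]
The gain in $m$ appears naturally on the \emph{random} side: each entry of $B^*M$ is an average of $|G_k|$ rows of $M$, so $|B^*M|_\infty\lesssim |\Gamma|_\infty\sqrt{\log p/(mn)}$ by a Gaussian-quadratic-form bound plus a union bound over $p^2$ pairs. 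This pairs with the deterministic identity $|B^*(B^*-B)|_1=2\sum_{j\ne k}|B_{G_jG_k}|_1$ to produce the $\sqrt{n\log p/m}$ term. The second piece uses $\|(I-B^*)B(I-B^*)\|_*=\langle I-B^*,B\rangle\le (\tr(B)-K)+\tfrac1m\sum_{j\ne k}|B_{G_jG_k}|_1$ together with the Wishart bound $\|M\|_{op}\le 4|\Gamma|_\infty(\sqrt{p/n}+p/n)$, which yields both the $\sqrt{np/m^2}\vee p/m$ contributions and the $(\tr(B)-K)$ term. The missing idea, then, is to put the averaging on $M$ (via $B^*M$) rather than to look for structure in the off-diagonal blocks of $B$.
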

\noindent For all matrices $B\in \cC$ we have  $\tr(B)-K=0$. Therefore,   the second line of \eqref{main2-bis} is zero for 
the purpose of this proof.  Combining \eqref{eq:main1},  \eqref{eq:main3}, and \eqref{main2-bis} we obtain that,  with probability larger than $1-c/p$, 
 \begin{eqnarray}\label{main}
 \langle W, B^*-B \rangle \geq  \sum_{j\neq k} \Bigg[{1\over 2}|\bZ_{:j}-\bZ_{:k}|^2_2  -  c_1 \sqrt{\log(p)} |\Gamma|_{\infty}^{1/2}|\bZ_{:j}-\bZ_{:k}|_2 \\ -c_2 \frac{n|\widehat\Gamma-\Gamma|_{V}}{m}  - c_{2}|\Gamma|_{\infty}\left( \sqrt{  \frac{n\log p}{m}   }  \vee    \sqrt {\frac{np}{m^2}} \vee \frac{p}{m}\right )\Bigg] |B_{G_jG_k}|_1\ , \nonumber
\end{eqnarray}
simultaneously for all $B\in \cC$. Therefore, if each term in the bracket of (\ref{main}) is positive, with high probability,  (\ref{eq:main_objective}) will follow,   since any matrix $B\in \cC$ whose support is not included in $\mathrm{supp}(B^*)$ satisfies $ |B_{G_jG_k}|_1>0$ for some $j\neq k$,

Since for any $j \neq k$ the differences $Z_{ij} - Z_{ik}$, for $1 \leq i \leq n$ are i.i.d. Gaussian random variables with mean zero and variance $C_{jj}+ C_{kk}-2C_{jk}$, 
we can apply Lemma \ref{lem:quadratic} in Appendix \ref{sec:LaurentMassart} with  $A = -I$ and $t = \log p$. Then, 
if $\log(p)<n/32$,  \beq \label{eq:control_distance}
|\bZ_{:j}-\bZ_{:k}|^2_2\geq n[C_{jj}+ C_{kk}-2C_{jk}]/2 \ ,
\eeq
simultaneously for all $j\neq k$,  with probability larger than $1-1/p$.  Then, on the event for which (\ref{eq:control_distance}) holds intersected  with  the event $|\Gamma-\widehat\Gamma|_{V}\leq \delta_{n,p}$, Condition \eqref{eq:assumption2} enforces that, for all $j\neq k$,
\[  \frac{1}{4}|\bZ_{:j}-\bZ_{:k}|^2_2 \geq c_1 \log(p) |\Gamma|_{\infty}, \]
and also 
\begin{eqnarray}  \label{eq:control_distance2}
 \frac{1}{4}|\bZ_{:j}-\bZ_{:k}|^2_2 &\geq& c_2 \frac{n|\widehat\Gamma-\Gamma|_{V}}{m} \\ &+&  c_{2}|\Gamma|_{\infty}\left( \sqrt{  \frac{n\log p}{m}   }  \vee    \sqrt {\frac{np}{m^2}} \vee \frac{p}{m}\right )
\end{eqnarray}
with probability larger than $1-c'/p$. Therefore,  $ \langle W, B^*-B \rangle  > 0$, for all $B\in \cC$,  which concludes the proof of this theorem . $\blacksquare$

\begin{proof}[Proof of Lemma \ref{lem:W2}]
Consider any $a$ and $b$ in $[p]$ and let $j$ and $k$ be such that $a\in G_j$ and $b\in G_k$. If $j=k$, $(W_{2})_{ab}=0$. 
If $j\neq k$, then $(W_{2})_{ab}$ follows, conditionally to $\bZ$, a normal distribution with variance 
$|\bZ_{:j}-\bZ_{:k}|_2^2[\gamma_{j}+ \gamma_{k}]$. Applying the Gaussian concentration inequality together with the union bound, we conclude that with probability larger than $1-1/p$, 
\[|(W_2)_{ab}|\leq c_{1} |\bZ_{:j}-\bZ_{:k}|_2\sqrt{\log(p)}(\gamma_{j}^{1/2}\vee \gamma_{k}^{1/2}  )\ ,\]
simultaneously for all $a,\ b\in [p]$. It then follows that 
\[
|\langle W_2, B^*-B \rangle|\leq c_1 \sqrt{\log(p)}|\Gamma|_{\infty}^{1/2}\sum_{j\neq k}|\bZ_{:j}-\bZ_{:k}|_2 |B_{G_jG_k}|_1\ ,
\]
becauce $B^*_{ab}=0$ and $B_{a,b}\geq 0$ for all $a\in G_j$ and $b\in G_k$, with $j\neq k$.

\end{proof}

\begin{proof}[Proof of Lemma \ref{lem:covariance}] 
We split the scalar product $\langle  \widetilde\Gamma- \widehat\Gamma , B^* - B \rangle$ into two terms $\langle  \widetilde\Gamma- \widehat\Gamma , B^* - B \rangle=\langle  \widetilde\Gamma- \Gamma , B^* - B \rangle+\langle  \Gamma- \widehat\Gamma , B^* - B \rangle$. \\

\smallskip

\noindent {\it (a) Control of $\langle  \widetilde\Gamma- \Gamma , B^* - B \rangle$.}\\*

\noindent Observe first that  $B^*$ is a projection matrix that induces the following decomposition  of $\widetilde\Gamma- \Gamma$.
\begin{eqnarray*}
\widetilde\Gamma-\Gamma&=& B^*(\widetilde\Gamma-\Gamma) + (\widetilde\Gamma-\Gamma) B^*  - B^* (\widetilde\Gamma-\Gamma)B^* +  (I - B^*)(\widetilde\Gamma-\Gamma)(I -  B^*).  
\end{eqnarray*}
By the definition of the inner product, followed by the triangle inequality,  and since  $(I-B^*)B^* = 0$, we further have: 
\begin{eqnarray}\nonumber
|\langle  \widetilde\Gamma- \Gamma , B^* - B \rangle| & \leq& 3 |B^*(\widetilde\Gamma-\Gamma)|_{\infty}|B^*(B^* - B)|_1 + |\langle (I - B^*)(\widetilde\Gamma-\Gamma)(I -  B^*), B^* - B\rangle| \\
&=&  3 |B^*(\widetilde\Gamma-\Gamma)|_{\infty}|B^*(B^* - B)|_1 +   |\langle\widetilde\Gamma-\Gamma, (I-B^*)B(I-B^*)\rangle|.  \label{unu}  
\end{eqnarray} 
By the duality of  the nuclear  $\| \ \|_*$  and operator $\| \ \|_{op}$  norms, we  have
\begin{eqnarray*}
  |\langle\widetilde\Gamma-\Gamma, (I-B^*)B(I-B^*)\rangle| 
	&\leq&  \|\widetilde\Gamma-\Gamma\|_{op}  \|(I-B^*)B(I-B^*)\|_*.
\end{eqnarray*}
We begin by bounding the  nuclear norm $\|(I-B^*)B(I-B^*)\|_*$. Since $(I-B^*)B(I-B^*) \in \mathcal S^+$, we have 
\[\|(I-B^*)B(I-B^*)\|_* = \tr((I-B^*)B(I-B^*)) =  \langle I-B^*,B(I-B^*) \rangle= \langle I-B^*,B \rangle.\] 
Using the fact that the sum of each row of $B$ is 1,
 we have
\begin{align}
\|(I-B^*)B(I-B^*)\|_*=	\langle I-B^*,B \rangle &= \tr(B) - \sum_{k=1}^{K}\sum_{a,b\in G_k}\frac{B_{ab}}{|G_k|}\nonumber \\
&= \tr(B)-K + \sum_{k\neq j}\sum_{a\in G_k,\ b\in G_j}\frac{B_{ab}}{|G_k|}  \nonumber\\
	& \leq \tr(B)-K +  \frac{1}{m}\sum_{k\neq j}|B_{G_jG_k}|_1 \, . \label{eq:trace-K}
\end{align}
 Next, we simplify the expression of  $|B^*(B^* - B)|_1=|B^*(I-B)|_{1}$.  
\beqn
|B^*(I - B)|_1&=&\sum_{j\neq k}\sum_{a\in G_j,\ b\in G_k}|(B^*B)_{ab} |+  \sum_{k=1}^K\sum_{a,b\in G_k}|[B^*(I - B)]_{ab}|\\
 &= &\sum_{j\neq k}\sum_{a\in G_j,\ b\in G_k} \frac{1}{|G_j|}\sum_{c\in G_j}B_{cb}+ \sum_{k=1}^K\sum_{a,b\in G_k}\frac{1}{|G_k|}\Big|1 -\sum_{c\in G_k}B_{cb}\Big|\\
 & = &  2 \sum_{j\neq k }|B_{G_jG_k}|_1\ , 
\eeqn
where we used again $B1=1$ and that the entries of $B$ are nonnegative.
Gathering the above bounds together with  (\ref{unu}) yields: 
\begin{equation}\label{start1}
|\langle \widetilde\Gamma- \Gamma, B^*-B \rangle| \leq   2\Bigg[\sum_{j\neq k }|B_{G_jG_k}|_1\Bigg]\pa{ {\| \widetilde\Gamma-\Gamma\|_{op}\over 2m}+ 3|B^*(\widetilde\Gamma-\Gamma)|_{\infty}} + \big[\tr(B)-K\big]\| \widetilde\Gamma-\Gamma\|_{op}.
\end{equation}
We bound below the two terms in the parenthesis of (\ref{start1}).  Since $n \Gamma^{-1/2}\widetilde\Gamma\Gamma^{-1/2}$ follows a Wishart distribution with $(n,p)$ parameters, we obtain by~\cite{Davidson2001} that
\begin{eqnarray}\label{eq:hatV_1}
\| \widetilde\Gamma-\Gamma\|_{op}\leq 4\|\Gamma\|_{op}\left[\sqrt{\frac{p}{n}}+ \frac{p}{n}\right]=4|\Gamma|_{\infty}\left[\sqrt{\frac{p}{n}}+  \frac{p}{n}\right]\ , 
\end{eqnarray}
with probability larger than $1-1/p$. 
We now turn to $|B^*(\widetilde\Gamma-\Gamma)|_{\infty}$. For any $a,b$ in $[p]$, let $k$ be such that $a\in G_k$. We have
 \[
[B^*(\widetilde\Gamma-\Gamma)]_{ab} = \frac{1}{|G_k|} \sum_{l\in G_k} \left(\widetilde\Gamma _{lb}- \Gamma_{lb}\right) = \frac{1}{n|G_k|}  \sum_{l\in G_{k}}\sum_{i=1}^{n} [ \epsilon_{li}\epsilon_{bi} - \E(\epsilon_l\epsilon_b)] \ .
 \]
The sum $\sum_{l \sim a}\sum_{i=1}^{n} [ \epsilon_{li}\epsilon_{bi} - \E(\epsilon_l\epsilon_b)]$ is a centered quadratic form of $n(|G_k|+1)$ (or $n|G_k|$ if $b\in G_k$) independent normal  variables whose variances belong to $\{\gamma_l,\ l=b\text{ or }l\sim a\}$. Applying Lemma \ref{lem:quadratic} together with the assumption $\log(p)\leq c_{1}n$, we derive that, with probability larger than $1-c'/p$, 
\[
 \sum_{l \sim a}\sum_{i=1}^{n} [ \epsilon_{li}\epsilon_{bi} - \E(\epsilon_l\epsilon_b)]\leq c |\Gamma|_{\infty}\left(\sqrt{n|G_k|\log(p)}+ \log(p)\right)   \leq c' |\Gamma|_{\infty}\sqrt{n|G_k|\log(p)}
\]
simultaneously for all $l$ and all $b$. In the second inequality, we used the assumption $\log(p)\leq c_1 n$.
This yields 
\begin{equation} \label{eq:hatV_2} 
\mathbb{P}\left[|B^*(\widetilde\Gamma-\Gamma)|_{\infty} \leq c''|\Gamma|_{\infty} \sqrt{\frac{\log p}{mn}} \right]\geq 1-c'/p\ .
\end{equation}
Plugging into (\ref{start1}) the bounds derived in (\ref{eq:hatV_1}) and (\ref{eq:hatV_2}) above, and noticing from \eqref{eq:trace-K} that 
\beq\label{eq:trace-K2}
{1\over m} \sum_{j\neq k }|B_{G_jG_k}|_1+\tr(B)-K\geq 0,
\eeq
we obtain that 
\[
|\langle \widetilde\Gamma-\Gamma , B^*-B \rangle|   \leq c |\Gamma|_{\infty}\left( \sqrt{  \frac{\log p}{mn}   }  \vee    \sqrt {\frac{p}{m^2n}} \vee \frac{p}{nm}\right ) \sum_{j\neq k }|B_{G_jG_k}|_1+ 4|\Gamma|_{\infty}\left[\sqrt{\frac{p}{n}}+ \frac{p}{n}\right](\tr(B)-K)\ ,
\]
with  probability larger  than $ 1 - c'/p$.\\

\smallskip

\noindent {\it (b) Control of $\langle  \Gamma- \widehat\Gamma , B^* - B \rangle$.}\\*

\noindent We follow the same approach as for $\widetilde\Gamma-\Gamma$. 
The additional ingredient is that $\langle \Gamma- \widehat\Gamma, B^*-B \rangle=\langle \Gamma- \widehat\Gamma-\alpha I_{p}, B^*-B \rangle+ \alpha[K-\tr(B)]$ for any $\alpha\in\bbR$, since $\tr(B^*)=K$. 
Analogously to \eqref{start1}, for any $\alpha\in\bbR$,  the following holds:
\begin{align*}
|\langle \Gamma- \widehat\Gamma, B^*-B \rangle| &\leq|\langle \Gamma- \widehat\Gamma-\alpha I_{p}, B^*-B \rangle| + |\alpha [K-\tr(B)]| \\
&\leq   2\Bigg[\sum_{j\neq k }|B_{G_jG_k}|_1\Bigg]\pa{ {\| \Gamma-\widehat\Gamma-\alpha I_{p}\|_{op}\over 2m}+ 3|B^*(\Gamma-\widehat\Gamma)-\alpha I_{p}|_{\infty}}\\ &\quad + |\alpha [K-\tr(B)]| +\big[\tr(B)-K\big]\| \widetilde\Gamma-\Gamma-\alpha I\|_{op}\ .
\end{align*}
We fix $\alpha= | \Gamma-\widehat\Gamma|_{V}/2$ so that $| \Gamma-\widehat\Gamma-\alpha I_{p}|_{\infty}=| \Gamma-\widehat\Gamma|_{V}/2$.
Since $\alpha I_{p}$, $\Gamma$ and $\widehat\Gamma$ are diagonal matrices, the above inequality simplifies to 
\[
|\langle \Gamma- \widehat\Gamma, B^*-B \rangle| \leq   \frac{ 7|\Gamma-\widehat \Gamma|_{V}}{2m}\sum_{j\neq k }|B_{G_jG_k}|_1+ [\tr(B)-K]_{+}| \Gamma-\widehat\Gamma|_{V} \ .
\]
The proof of Lemma \ref{lem:covariance} is complete.
\end{proof}

\subsection{Poof of Corollary \ref{cor:consistency_2_steps2}}
At step 3 of PECOK, we have chosen a clustering algorithm which returns the partition $G^*$ when applied to the true partnership matrix $B^*$. Hence, PECOK returns $G^*$ as soon as $\widehat B=B^*$. The Corollary~\ref{cor:consistency_2_steps2} then follows by combining Theorem \ref{thm:consistency} and Proposition \ref{prp:control_u_gamma2}.

\subsection{Proof of Proposition \ref{prp:control_u_gamma2}}
Let $k$, $l_1$ and $l_2$ be such that $a\in G_k$ and $ne_1(a)\in G_{l_1}$ and $ne_2(a)\in G_{l_2}$.
 Starting from the identity $\bX_{:a}=\bZ_{:k}+\bE_{:a}$, we developp $\widehat\Gamma_{aa}$
\beqn
  \widehat\Gamma_{aa}&=& \frac{|\bE_{:a}|_2^2}{n} + \frac{1}{n} \langle \bZ_{:k}- \bZ_{:l_1}, \bZ_{:k}- \bZ_{:l_2}\rangle\\
 & &+ \frac{1}{n}\left[ \langle \bZ_{:k}- \bZ_{:l_1}, \bE_{:a}-\bE_{:ne_2(a)}\rangle +  \langle \bZ_{:k}- \bZ_{:l_2}, \bE_{:a}-\bE_{:ne_1(a)}\rangle \right]\\
 && + \frac{1}{n}\left[ \langle \bE_{:ne_1(a)}, \bE_{:ne_2(a)}\rangle - \langle \bE_{:a}, \bE_{:ne_1(a)}+\bE_{:ne_2(a)}\rangle\right]
\eeqn
Since $2xy\leq x^2+y^2$, the above expression decomposes as 
\begin{eqnarray}\label{eq:gamma_aa}
 \big|\widehat\Gamma_{aa}- \Gamma_{aa}\big|&\leq & \big|\frac{|\bE_{:a}|_2^2}{n}- \Gamma_{aa} \big|+ U_1 + 2U_2+3U_3\\
 U_1 &:=& \frac{1}{n} |\bZ_{:k}- \bZ_{:l_1}|_2^2+ \frac{1}{n}|\bZ_{:k}- \bZ_{:l_2}|_2^2\nonumber\\
 U_2&:= & \frac{1}{n}\sup_{k,j\in[K]}\sup_{b\in[p]} \langle \frac{\bZ_{:k}- \bZ_{:j}}{|\bZ_{:k}- \bZ_{:j}|_2}, \bE_{:b}\rangle^2 \ , \quad 
 U_3 :=  \frac{1}{n}\sup_{b\neq c }\langle \bE_{:b},\bE_{:c}\rangle \ .\nonumber
\end{eqnarray}
Recall that  all the columns of $\bE$ are independent and that $\bE$ is independent from $\bZ$. 
The terms $|E_{:a}|_2^2/n- \Gamma_{aa}$, $U_2$ and $U_3$ in \eqref{eq:gamma_aa} are quite straightforward to control as they either involve quadratic functions of Gaussian variables, suprema of Gaussian variables or suprema of centered quadratic functions of Gaussian variables. Applying the Gaussian tail bound and Lemma \ref{lem:quadratic} together with an union bound, and $\log(p)\leq c_{1} n$, we obtain
\beq\label{eq:upper_terme_simple}
\Big|\frac{|E_{:a}|_2^2}{n}- \Gamma_{aa} \big|+2U_2+3U_3\leq c|\Gamma|_{\infty}\sqrt{\frac{\log(p)}{n}} , 
\eeq
with probability higher than $1-1/p^2$. The main hurdle in this proof is to control the bias terms $|\bZ_{:k}- \bZ_{:l_i}|_2^2$ for $i=1,2$.

Since $m\geq 3$, there exists two indices $b_1$ and $b_2$ other than $a$ belonging to the  group $G_{k}$. As a consequence, $\bX_{:a}-\bX_{:b_i}= \bE_{:a}-\bE_{:b_{i}}$ is independent from $\bZ$ and from all the other columns of $\bE$. Hence, $\langle \bX_{:a}-\bX_{:b_{i}}, \tfrac{\bX_{:c}-\bX_{:d}}{|\bX_{:c}-\bX_{:d}|_2}\rangle$ is normally distributed with variance $2\Gamma_{aa}$ and it follows that, with probability larger than $1-p^{-2}$, 
\[V(a,b_1)\vee V(a,b_2) \leq c|\Gamma|^{1/2}_{\infty}\sqrt{\log(p)}\ .\]
The definition of $ne_1(a)$ and $ne_2(a)$ enforces that $V(a,ne_1(a))$ and $V(a,ne_2(a))$ satisfy the same bound.

When $k=l_{1}$ then $|\bZ_{:k}- \bZ_{:l_1}|_2^2=0$, so we only need to consider the case where $k\neq l_{1}$. 
Let $c\in G_k\setminus\{a\}$ and $d\in G_{l_{1}}\setminus\{ne_1(a)\}$, which exists since $m\geq 3$. The above inequality for $V(a,ne_1(a))$ implies 
\beq\label{eq:condition_Xa}
\big|\langle \bX_{:a}-\bX_{:ne_1(a)},\bX_{:c}-\bX_{:d}\rangle\big| \leq c|\Gamma|^{1/2}_{\infty}\sqrt{\log(p)}|\bX_{:c}-\bX_{:d}|_2.
\eeq
This inequality is the key to control the norm of $t= \bZ_{:k}-\bZ_{:l_1}$. 
Actually, since $a,c\in G_{k}$ and $ne_{1}(a),d\in G_{l_{1}}$, 
we have
\begin{align*}
 \big|\langle \bX_{:a}-\bX_{:ne_1(a)},&\bX_{:c}-\bX_{:d}\rangle\big| 
 = \big| |t|_{2}^2+\langle t,E_{a}-E_{ne_{1}(a)}+E_{c}-E_{d}\rangle +\langle \bE_{:a}-\bE_{:ne_1(a)},\bE_{:c}-\bE_{:d}\rangle\big|\\
&\geq \frac{|t|_2^2}{2}-\frac{1}{2}\big| \langle \frac{t}{|t|_2}, \bE_{:a}-\bE_{:ne_1(a)}+\bE_{:c}- \bE_{:d}\rangle\big|^2 - \big|\langle \bE_{:a}-\bE_{:ne_1(a)},\bE_{:c}-\bE_{:d}\rangle\big|.   
\end{align*}
Applying again a Gaussian deviation inequality and Lemma \ref{lem:quadratic} simultaneously for all $a,b,c,d\in [p]$ and $k,l\in [K]$, we derive that with probability larger than $1-p^{-2}$,
\[
\big|\langle \bX_{:a}-\bX_{:ne_1(a)},\bX_{:c}-\bX_{:d}\rangle\big|\geq {1\over 2}|t|_2^2 - c|\Gamma|_{\infty}\sqrt{n\log(p)}\ ,
\]
since $\log(p)\leq c_{1} n$.
Turning to the rhs of \eqref{eq:condition_Xa}, we have $|\bX_{:c}-\bX_{:d}|_2\leq |t| + |\bE_{:c}-\bE_{:d}|_2$. Taking an union bound over all possible $c$ and $d$, we have $|\bE_{:c}-\bE_{:d}|_2\leq c|\Gamma|_{\infty}^{1/2}\sqrt{\log(p)}\leq c'|\Gamma|_{\infty}^{1/2}n^{1/2}$ with probability larger than $1-p^{-2}$. Plugging these results in \eqref{eq:condition_Xa}, we arrive at 
\[
 |t|_2^2 - c_1 |t|_2|\Gamma|_{\infty}^{1/2}\sqrt{\log(p)}\leq c_2 |\Gamma|_{\infty}\sqrt{n\log(p)}\ .
\]
This last inequality together with $\log(p)\leq c_{1} n$ enforce that 
\[
|t|_{2}^2= |\bZ_{:k}-\bZ_{:l_1}|_2^2\leq c |\Gamma|_{\infty}\sqrt{n\log(p)}.
\]
Analogously, the same bound holds for  $|\bZ_{:k}-\bZ_{:l_2}|_2^2$.
Together with \eqref{eq:gamma_aa} and \eqref{eq:upper_terme_simple}, we have proved that 
\[\big|\widehat\Gamma_{aa}- \Gamma_{aa}\big|\leq c  |\Gamma|_{\infty}\sqrt{\frac{\log(p)}{n}}\ ,\]
with probability larger than $1-p^{-2}$. The result follows.

\subsection{Proof of Theorem \ref{thm:Kselection}}
We shall follow the same approach as in the proof of Theorem \ref{thm:consistency}. We need to prove that 
 \[ \langle \widehat{\Sigma}, B^*-B \rangle + \widehat{\kappa} [\tr(B)-K] > 0, \  \mbox{ for all} \  B \in \mathcal{C}_0\setminus\{B^*\}  .\]
 As in that previous proof, we introduce the matrix $W$, so that it suffices to prove that 
\beq\label{eq:main_objective_bis}
R(B):=\langle W, B^*-B \rangle + n\widehat{\kappa} [\tr(B)-K] > 0,  \  \mbox{ for all} \  B \in \mathcal{C}_0\setminus\{B^*\}\ .
\eeq
We use the same decomposition as previously.  Applying \eqref{eq:main1} together with Lemmas \ref{lem:W2} and \ref{lem:covariance}, we derive that, with probability larger than $1-c/p$, 
 \begin{eqnarray}\label{main-K}
 \lefteqn{\langle W, B^*-B \rangle \geq  \sum_{j\neq k} \Bigg[{1\over 2}|\bZ_{:j}-\bZ_{:k}|^2_2  -  c_1 \sqrt{\log(p)} |\Gamma|_{\infty}^{1/2}|\bZ_{:j}-\bZ_{:k}|_2} \\&& -c_2 \frac{n|\widehat\Gamma-\Gamma|_{V}}{m}  - c_{2}|\Gamma|_{\infty}\left( \sqrt{  \frac{n\log p}{m}   }  \vee    \sqrt {\frac{np}{m^2}} \vee \frac{p}{m}\right )\Bigg] |B_{G_jG_k}|_1\ , \nonumber \\
 && - 4n|\Gamma|_{\infty}\left(\sqrt{\frac{p}{n}}+\frac{p}{n}\right)(\tr(B)-K)-  n[\tr(B)-K]_{+}| \Gamma-\widehat\Gamma|_{V} \ .\nonumber
\end{eqnarray}
As in \eqref{eq:control_distance}, we use that with high probability  $|\bZ_{:j}-\bZ_{:k}|^2_2$ is larger than $n\Delta(C)/2$.  Condition \eqref{eq:assumption2-bis} then enforces, that with high probability, the term inside
the square brackets in \eqref{main-K} is larger than $n\Delta(C)/8$. As a consequence, we have
\begin{eqnarray}
 R(B)\geq  {n\over 8}\Delta(C) \sum_{j\neq k} |B_{G_jG_k}|_1 
  +  n(\tr(B)-K)\left[ \widehat{\kappa} - 4|\Gamma|_{\infty}\left(\sqrt{\frac{p}{n}}+ \frac{p}{n}\right)  - \mathbf{1}_{\tr(B)\geq K}| \Gamma-\widehat\Gamma|_{V}\right] \ , \label{eq:main_adaptation}
\end{eqnarray}
uniformly over all $B\in \cC_0$. To finish the proof, we divide the analysis intro three cases depending on the values of $\tr(B)$.\smallskip

\noindent 1. If $\tr(B)>K$, we apply Condition \eqref{eq:condition_widehat_kappa} to get $\widehat{\kappa} - 4|\Gamma|_{\infty}\left(\sqrt{\frac{p}{n}}+ \frac{p}{n}\right)  - | \Gamma-\widehat\Gamma|_{V}>0$, which implies that the right-hand side in \eqref{eq:main_adaptation} is positive. 
\smallskip

\noindent 2. If $\tr(B)=K$, the right-hand side in \eqref{eq:main_adaptation} is positive except if $\sum_{j\neq k} |B_{G_jG_k}|_1=0$, which implies $B=B^*$ by Lemma \ref{lem:support}. 
\smallskip

\noindent 3. Turning to the case $\tr(B)<K$, \eqref{eq:main_adaptation} implies that 
\begin{eqnarray}
 R(B) \geq {n\over 8}\Delta(C) \sum_{j\neq k} |B_{G_jG_k}|_1 - \widehat{\kappa} n(K-\tr(B)) \ . \label{eq:main_adaptation2bis}
\end{eqnarray}
It turns out, that when $\tr(B)<K$, the support of $B$ cannot be included in the one of $B^*$ so that $ \sum_{j\neq k} |B_{G_jG_k}|_1$ is positive. 
Actually, \eqref{eq:trace-K2} ensures that
$\sum_{j\neq k }|B_{G_jG_k}|_1 \geq [K-\tr(B)]m$, so together with \eqref{eq:main_adaptation2bis}, this gives us
 \[
 R(B) \geq  n[K-\tr(B)] \left[ \frac{m}{8}\Delta(C) -  \widehat{\kappa} \right]\ ,
\]
which is positive by condition \eqref{eq:assumption2-bis}.

\subsection{Proof of Theorem \ref{prp:minimax_lower_bound}}

The proof is based on a careful application of Fano's lemma. Before doing this, we need to reformulate the clustering objective into a discrete estimation problem.

\paragraph{Construction of the covariance matrices $\Sigma^{(j)}$.}

Let $A^{(0)}$ be the assignment matrix  such that the $m$ first variables belong to the first group, the next $m$ belong to the second group and so on. In other words, 
\[A^{(0)} = \begin{bmatrix}
1&0&0&\hdots&&0\\
\vdots\\
1&0&0&\hdots&&0\\
0&1&0&\hdots&&0\\
&\vdots&\ddots\\
&&&&0&1\\
&&&&&\vdots\\
&&&&&1\\
\end{bmatrix}
\text{ so that }
\Sigma^{(0)} = \begin{bmatrix}
1+\tau&\tau&\tau&0&\hdots\\
\tau & \ddots & \tau\\
\tau&\tau&1+\tau\\

0&&&1+\tau&\tau&\tau&0&\hdots\\
\vdots&&&\tau&\ddots&\tau\\
&&&\tau&\tau&1+\tau\\
&&&0&&&\ddots\\
&&&\vdots&&&&\\

\end{bmatrix}\ , \]
where  $\Sigma^{(0)}= A^{(0)} \tau I_K {A^{(0)}}^t + I_p$. Note that the associated partition for $G^{(0)}$ is $\{ \{1...m\}...\{p-m+1...p\}\}$. For any $a=m+1,\ldots, p$, denote $\mu_{a}$ the transposition between $1$ and $a$ in $\{1...p\}$. Then, for any $a=m+1,\ldots,p$, define the 
assignement matrix $A^{(a)}$ and $\Sigma^{(a)}$ by 
\[A^{(a)}_{ij}= A^{(0)}_{\mu_{a}(i),j}\ ,\quad   \Sigma^{(a)}_{ij}= \Sigma^{(0)}_{\mu_{a}(i),\mu_a(j)}\ .\]
In other words, the corresponding partition $G^{(a)}$ is obtained from $G^{(0)}$  by exchanging the role of the first and the $a$-th node. 
\medskip

Also define the subset $M:=\{0, m+1, m+2,\ldots, p\}$.
Equipped with these notations, we observe that the minimax error probability of perfect recovery is lower bounded by 
$\overline{\mathbf{R}}^*[\tau,n,m,p]\geq \inf_{\hat{G}} \max_{j\in M } \P_{\Sigma^{(j)}} \big( \hat{G} \neq G_j \big)$.
According to Birg\'e's version of Fano's Lemma (see e.g. \cite[Corollary 2.18]{massart}), 
\[
 \inf_{\hat{G}} \max_{j\in M } \P_j \big( \hat{G} \neq G_j \big)\geq  \frac{1}{2e+1}\bigwedge \left(1-\frac{\sum_{j\in M\setminus\{0\}}\KL(\P^{\otimes n}_{\Sigma^{(j)}},\P^{\otimes n}_{\Sigma^{(0)}})}{(|M|-1)\log(|M|)}\right).
\]
By symmetry, all the Kullback divergences are equal. Since $2e/(2e+1)\geq 0.8$ and $1/(2e+1)\geq 1/7$, we arrive at
\beq\label{eq:fano} 
 \overline{\mathbf{R}}^*[\tau,n,m,p] \geq  1/7\ , \quad  \text{ if }\quad  n\KL(\P_{\Sigma^{(m+1)}},\P_{\Sigma^{(0)}})\leq 0.8 \log(p-m+1)\ .
\eeq
As the derivation of the Kullback-Leibler discrepancy is involved, we state it here and postpone its proof  to the end of the section.

\begin{lem}\label{lem:computation_kullback_distance}
For any $\tau>1$ and any integers $p$ and $m$, we have
\beq \label{eq:Kullback} 
\KL(\P_{\Sigma^{(m+1)}},\P_{\Sigma^{(0)}})= \frac{2 (m-1)\tau^2}{1+m\tau}
\eeq
\end{lem}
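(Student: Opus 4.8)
The plan is to reduce the computation to a short linear-algebra exercise built on three ingredients: the closed form of the Kullback--Leibler divergence between centered Gaussians, a rank-two description of the perturbation $\Sigma^{(m+1)}-\Sigma^{(0)}$, and the block-wise inverse of $\Sigma^{(0)}$ via Sherman--Morrison.

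First I would recall that for positive definite $\Sigma_0,\Sigma_1$ one has
\[
\KL(\P_{\Sigma_1},\P_{\Sigma_0})=\tfrac12\Big(\tr(\Sigma_0^{-1}\Sigma_1)-p+\log\tfrac{\det\Sigma_0}{\det\Sigma_1}\Big).
\]
Since $\Sigma^{(m+1)}$ is obtained from $\Sigma^{(0)}$ by the simultaneous row/column permutation $\mu_{m+1}$, the two matrices are orthogonally similar, hence have the same determinant; the logarithmic term vanishes, and, writing $D:=\Sigma^{(m+1)}-\Sigma^{(0)}$ and using $\tr((\Sigma^{(0)})^{-1}\Sigma^{(0)})=p$, we are left with $\KL(\P_{\Sigma^{(m+1)}},\P_{\Sigma^{(0)}})=\tfrac12\tr\big((\Sigma^{(0)})^{-1}D\big)$.

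Next I would compute $D$ explicitly. Because $\mu_{m+1}$ moves only the indices $1$ and $m+1$, the matrix $D$ is supported on the two rows and two columns indexed by $1$ and $m+1$; moreover $D_{11}=D_{m+1,m+1}=D_{1,m+1}=0$. Tracking which pairs of indices in $\{1,\dots,2m\}$ lie in a common group before and after the transposition (i.e.\ comparing $G^{(0)}$, with $G^{(0)}_1=\{1,\dots,m\}$ and $G^{(0)}_2=\{m+1,\dots,2m\}$, to $G^{(m+1)}$), one finds that the first row of $D$ equals $-\tau$ on the coordinates $\{2,\dots,m\}$ and $+\tau$ on $\{m+2,\dots,2m\}$, and that the $(m+1)$-st row is its negative. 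Equivalently, setting $v:=e_{m+1}-e_1$ and $u:=\1_{\{2,\dots,m\}}-\1_{\{m+2,\dots,2m\}}$ (with $\1_S$ the indicator vector of $S$), this reads $D=\tau\,(v u^t+u v^t)$. This bookkeeping is the only genuinely error-prone step; everything else is mechanical.

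Finally I would invert $\Sigma^{(0)}$: it is block-diagonal with $K$ identical blocks $I_m+\tau\1\1^t$, each inverted by Sherman--Morrison to $I_m-\tfrac{\tau}{1+m\tau}\1\1^t$, so $(\Sigma^{(0)})^{-1}e_a=e_a-\tfrac{\tau}{1+m\tau}\1_{G^{(0)}_{k(a)}}$. From $D=\tau(vu^t+uv^t)$ and symmetry of $(\Sigma^{(0)})^{-1}$ we get $\tr((\Sigma^{(0)})^{-1}D)=2\tau\,u^t(\Sigma^{(0)})^{-1}v$. Plugging in $(\Sigma^{(0)})^{-1}v=v-\tfrac{\tau}{1+m\tau}\big(\1_{\{m+1,\dots,2m\}}-\1_{\{1,\dots,m\}}\big)$ and using that $u$ vanishes on coordinates $1$ and $m+1$ (so $u^t v=0$) while $u^t\1_{\{1,\dots,m\}}=m-1$ and $u^t\1_{\{m+1,\dots,2m\}}=-(m-1)$, one obtains $u^t(\Sigma^{(0)})^{-1}v=\tfrac{2(m-1)\tau}{1+m\tau}$, hence $\tr((\Sigma^{(0)})^{-1}D)=\tfrac{4(m-1)\tau^2}{1+m\tau}$ and $\KL(\P_{\Sigma^{(m+1)}},\P_{\Sigma^{(0)}})=\tfrac{2(m-1)\tau^2}{1+m\tau}$, which is the claimed identity (the hypothesis $\tau>1$ only serves to guarantee $1+m\tau>0$).
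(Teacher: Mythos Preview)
Your proof is correct and is cleaner than the paper's. The decisive simplification is your observation that $\Sigma^{(m+1)}$ and $\Sigma^{(0)}$ are permutation-similar and therefore have the same determinant, so the $\log\det$ contribution to the KL formula vanishes for free; the paper instead writes $(\Sigma^{(0)})^{-1}\Sigma^{(m+1)}=I_p+B$ with $B$ of rank two, reduces $B$ through a chain of $4\times4$ and then $2\times2$ block computations to a matrix $D$, and checks that $\det(I+D)=1+\tr(D)+\det(D)=1$ because $\det(D)=-\tr(D)$. For the trace, the paper uses the projector decomposition $\Sigma^{(i)}=(1+m\tau)P_i+(I_p-P_i)$ and the same block reductions to obtain $\tr(D)=\tfrac{4(m-1)\tau^2}{1+m\tau}$, whereas you reach the same number more directly via the rank-two factorization $D=\tau(vu^t+uv^t)$ and the Sherman--Morrison inverse of each $I_m+\tau\1\1^t$ block. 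Your route avoids the block-constancy bookkeeping entirely and makes transparent why only the trace term survives; the paper's route, on the other hand, yields the eigenstructure of the perturbation as a by-product. One tiny quibble: $\tau>1$ is not what guarantees $1+m\tau>0$ (any $\tau>0$ does), but this is a harmless aside.
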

As a consequence, the minimax error probability of perfect recovery $\overline{\mathbf{R}}^*[\tau,n,m,p]$ is larger than $1/7$  as soon as 
\[
 \frac{2n (m-1)\tau^2}{1+m\tau}\leq 0.8 \log(p-m+1)\ . 
\]
This last condition is satisfied as soon as 
\[
 \tau \leq c \left[\sqrt{\frac{\log(p)}{n(m-1)}}\bigvee \frac{\log(p)}{n} \right]\ ,
\]
for some numerical constant $c>0$.

\subsection{Proof of Lemma \ref{lem:computation_kullback_distance}}
The Kullback-Leibler divergence between two centered normal distributions writes as 
\beq\label{eq:formula_kullback}
\KL(\P_{\Sigma^{(m+1)}}, \P_{\Sigma^{(0)}}) = \frac{1}{2} \big[ - \log \mathrm{det} \big((\Sigma^{(0)})^{-1}\Sigma^{(m+1)}\big) + \text{trace} \big((\Sigma^{(0)})^{-1}\Sigma^{(m+1)} - I_p\big)
\big] \ , 
\eeq
so that we only have to compute the determinant and the trace of $A:= (\Sigma^{(1)})^{-1}\Sigma^{(m+1)}$. We shall see that see that $A$ is a rank 2 perturbation of the identity matrix, so that we will only need to compute its two eigenvalues different from zero.

Observe that for $i=0,m+1$, the matrices $A^{(i)}{A^{(i)}}^t$ admit exactly $K$ non-zero eigenvalues that are all equal to $m$. As a consequence, we can decompose $A^{(i)}{A^{(i)}}^t = m \sum_{k=1}^{K}u_k^{(i)}(u_k^{(i)})^t$ where $u_k^{(i)}$ is a unit vector whose non zero components are all equal to $1/m$ and correspond to the $k$-th group in $G^{(i)}$. Note that $u_k^{(0)}=u_{k}^{(m+1)}$ for $k=3,\ldots, K$ as $A^{(0)}$ and $A^{(m+1)}$ only differ by rows $1$ and $m+1$. The orthogonal projector $P_i= \sum_{k=1}^{K}u_k^{(i)}(u_k^{(i)})^t$ satisfies 
\[\Sigma^{(i)}= m \tau P_i + I_p = (1+m \tau)P_i + (I_p-P_i)\ .\]
Since $P_i$ and $I_p-P_i$ are orthogonal,
\[(\Sigma^{(i)})^{-1} = (1+m\tau)^{-1}P_i + (I_p-P_i)= I_p-\frac{m\tau}{1+m\tau}P_i\]
As a consequence of the above observations, we have
\beqn
A &=& I_p + (\Sigma^{(0)})^{-1}\Big[\Sigma^{(m+1)}-\Sigma^{(0)}\Big]\\
& = & I_p + m\tau\big(P_{m+1}- P_{0}\big) -  \frac{m^2\tau^2}{1+m\tau}P_{0}(P_{m+1}-P_{0}) =: I_p + B
\eeqn 
The matrices $P_{0}$ and $P_{m+1}$ are $k-1$ block diagonal with a first block of size $2m\times 2m$. Besides, $P_{0}$ and $P_{m+1}$ take the same values on all the $K-2$ remaining blocks. To compute the non-zero eigenvalues of $B$, we only to consider the restrictions $\overline{P}_{0}$ and $\overline{P}_{m+1}$  of $P_{0}$ and $P_{m+1}$ to the first $2m\times 2m$ entries. Also observe that the matrices $\overline{P}_{0}$ and $\overline{P}_{m+1}$ are $4\times 4$ block-constant, with block size 
\[
{\scriptsize
\begin{bmatrix}
1\times 1&1\times (m-1) &1\times 1& 1\times (m-1)\\
 (m-1) \times1 & (m-1) \times (m-1) & (m-1) \times1 & (m-1) \times (m-1) \\
1\times 1&1\times (m-1) &1\times 1& 1\times (m-1)\\
(m-1) \times1 & (m-1) \times (m-1) &(m-1) \times1&(m-1) \times (m-1)
\end{bmatrix}}\]
and the entries are 
\[
{\scriptsize
m\overline{P}_{0} = \begin{bmatrix}
1&1&0&0\\
1& 1 & 0 & 0\\
0&0&1&1\\
0&0&1&1
\end{bmatrix}
\text{ and }
m \overline{P}_{m+1} = \begin{bmatrix}
1&0&0&1\\
0& 1 & 1 & 0\\
0&1&1&0\\
1&0&0&1 \ .
\end{bmatrix}
}
\]
As a consequence, the non zero eigenvalues of $B$ are the same as those of \[
C:=  m\tau \big(\underline{P}_{m+1}- \underline{P}_{0}\big) -  \frac{m^2\tau^2}{1+m\tau}\underline{P}_{0}(\underline{P}_{m+1}-\underline{P}_{0})                                                                           \]
 where $\underline{P}_{m+1}$ and $\underline{P}_{0}$ are two $4\times 4$ matrices
\[
{\scriptsize
m\underline{P}_{0} = \begin{bmatrix}
1&(m-1)&0&0\\
1& (m-1) & 0 & 0\\
0&0&1&(m-1)\\
0&0&1&(m-1)
\end{bmatrix}
\text{ and }
m \underline{P}_{m+1} = \begin{bmatrix}
1&0&0&(m-1)\\
0& (m-1) & 1 & 0\\
0&(m-1)&1&0\\
1&0&0&(m-1) \ .
\end{bmatrix}
}
\]
Working out the product of matrices, we get
\[
{\scriptsize
 C = - \tau \begin{bmatrix}
0&(m-1)&0&-(m-1)\\
1& 0 & -1 & 0\\
0&-(m-1)&0&(m-1)\\
-1& 0 & 1 & 0\\
\end{bmatrix}  + \frac{(m-1)\tau^2}{1+m\tau}
\begin{bmatrix}
1&1&-1&-1\\
1& 1 & -1 & -1\\
-1&-1&1& 1\\
-1&-1&1&1 
\end{bmatrix}
}
\]
We observe  that these two matrices have their first (resp. second) and third (resp. fourth) lines and columns opposite to each other. As a consequence, the two non-zero eigenvalues of $C$ are the same as those of 
\beqn 
{\scriptsize
 D}&: =& {\scriptsize -2\tau \begin{bmatrix}
0&(m-1)\\
1& 0 
\end{bmatrix}  + \frac{2(m-1)\tau^2}{1+m\tau}
\begin{bmatrix}
1&1\\
1&1
\end{bmatrix} }\\
&=& {\scriptsize \frac{2\tau}{1+m\tau}\begin{bmatrix}
(m-1)\tau & -(m-1)[1+(m-1)\tau]\\
-(1+\tau)& (m-1)\tau
\end{bmatrix}}
\ .
\eeqn
Straightforward computations then lead to 
\[tr(D) = \frac{4(m-1)\tau^2}{1+m\tau} \ , \quad \mathrm{det}(D)= -tr(D)  \]

Coming back to \eqref{eq:formula_kullback}, we have
\beqn 
2\KL(\P_{\Sigma^{(m+1)}}, \P_{\Sigma^{(0)}}) &=&  - \log \mathrm{det} \big(A \big) + \text{trace} \big(A- I_p\big)
\big]\\&=& - \log \mathrm{det}(I+D)+ tr(D)\\
& =&  tr(D) - \log\big[1+tr(D)+ det(D)\big]\\
& =&\frac{4(m-1)\tau^2}{1+m\tau}  \  .
\eeqn

\subsection{Proof of Theorem \ref{prp:spectral_clustering}}
The proof is based on the following Lemma by Lei and Rinaldo \cite{LeiRinaldo}.
 \begin{lem}\label{lem:kmeans}
Let $M$ be any matrix of the form $M=AQ$ where $A\in \cA_{p,K}$ is a membership matrix and $Q\in \mathbb{R}^{K\times q}$, and  denote by $\delta$ the minimal distance between two rows of $Q$. Then, there exists a constant $c_{\eta}$,  such that, for any matrix $M'$ fulfilling $\|M-M'\|_{F}^2< m\delta^2/c_{\eta}$, the classification of the rows of $M'$ by an $\eta$-approximate $K$-means provides a clustering $\widehat G$ fulfilling 
$$\bar{L}(\hat G,G)\leq c_{\eta} {\|M-M'\|_{F}^2\over m\delta^2}.$$
\end{lem}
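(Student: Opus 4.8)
The plan is to treat this as a purely deterministic statement about the approximation quality of $K$-means and to reduce everything to controlling the single quantity $\|\widehat{M}-M\|_F$, where $\widehat{M}:=\widehat{A}\widehat{Q}$ is the matrix of fitted centroids produced by the $\eta$-approximate $K$-means applied to the rows of $M'$. First I would exploit feasibility: since $M=AQ$ with $A\in\cA_{p,K}$ a genuine membership matrix, the pair $(A,Q)$ is admissible in the minimisation defining (\ref{eq:definition_alpha_approximation}), so $\|M'-\widehat{M}\|_F^2\le \eta\min_{A'\in\cA_{p,K}}\min_{Q'}\|M'-A'Q'\|_F^2\le \eta\|M'-M\|_F^2$. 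A triangle inequality then yields $\|\widehat{M}-M\|_F\le(1+\sqrt{\eta})\|M'-M\|_F$, hence $\|\widehat{M}-M\|_F^2\le c'_\eta\|M'-M\|_F^2$ with $c'_\eta=(1+\sqrt{\eta})^2$.

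Next I would localise the error. Because $M=AQ$, the $a$-th row of $M$ equals the row of $Q$ indexing the true cluster of $a$, so the rows of $M$ take exactly $K$ values, pairwise separated by at least $\delta$. Define the bad set $S:=\{a:\ \|\widehat{M}_{a:}-M_{a:}\|_2\ge \delta/2\}$. A one-line Markov argument, $(\delta/2)^2|S|\le\sum_{a\in S}\|\widehat{M}_{a:}-M_{a:}\|_2^2\le\|\widehat{M}-M\|_F^2$, gives $|S|\le 4\|\widehat{M}-M\|_F^2/\delta^2\le 4c'_\eta\|M'-M\|_F^2/\delta^2$. Choosing $c_\eta=4c'_\eta$, the hypothesis $\|M-M'\|_F^2< m\delta^2/c_\eta$ forces $|S|<m\le|G_k|$ for every $k$, so every true cluster retains at least one \emph{good} node (a node outside $S$).

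The heart of the argument is to convert this control of $S$ into a misclassification bound, which requires building the matching permutation $\sigma$. For two good nodes $a,b$ lying in the same estimated cluster one has $\widehat{M}_{a:}=\widehat{M}_{b:}$, so $\|M_{a:}-M_{b:}\|_2\le\|M_{a:}-\widehat{M}_{a:}\|_2+\|\widehat{M}_{b:}-M_{b:}\|_2<\delta$; by the $\delta$-separation this forces $M_{a:}=M_{b:}$, i.e. $a,b$ share a true cluster. Hence, restricted to good nodes, the estimated partition refines the true one: each estimated cluster meets at most one true cluster on good nodes, giving a map $\phi$ from the estimated clusters carrying a good node to true clusters. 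Since each true cluster contains a good node, $\phi$ is onto the $K$ true clusters; as there are at most $K$ estimated clusters, $\phi$ must be a bijection, and I set $\sigma=\phi^{-1}$. Consequently all good nodes of $G_k$ sit in the single estimated cluster $\widehat{G}_{\sigma(k)}$, so $G_k\setminus\widehat{G}_{\sigma(k)}\subseteq G_k\cap S$. Summing over $k$ gives $\sum_k|G_k\setminus\widehat{G}_{\sigma(k)}|\le|S|$, whence $\overline{L}(\widehat{G},G)\le |S|/m\le c_\eta\|M-M'\|_F^2/(m\delta^2)$.

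I expect the routine estimates (feasibility, triangle inequality, the Markov bound on $|S|$) to be straightforward; the delicate step is the global combinatorial argument establishing that $\sigma$ is a genuine permutation. The subtlety is that a priori the good nodes of one true cluster could be scattered across several estimated clusters; it is precisely the smallness condition $\|M-M'\|_F^2< m\delta^2/c_\eta$, ensuring no cluster is entirely swallowed by $S$, that makes $\phi$ surjective and hence, by the cardinality count $\le K$, bijective. This is exactly what rules out such scattering and pins down $\sigma$.
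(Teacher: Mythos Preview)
Your argument is correct and is essentially the proof given by Lei and Rinaldo \cite{LeiRinaldo}; note that the present paper does not supply its own proof of this lemma but merely quotes it as a result of Lei and Rinaldo. The feasibility/triangle-inequality step, the Markov bound defining the bad set $S$, and the pigeonhole construction of the permutation $\sigma$ are exactly the ingredients of that original argument, and your handling of the bijectivity of $\phi$ (surjectivity from $|S|<m$, then cardinality $\leq K$) is the standard way the combinatorial step is closed.
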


  We start with the following observation.
 Since $\|\Sigma\|_{op}\geq m\|C\|_{op}\geq m \lambda_K(C)$, Condition
 \eqref{eq:spectral} enforces that
  \beq\label{eq:condition_Re_log}
 \frac{Re(\Sigma)\vee \log(p)}{n}\leq 1/c^{2}_{\eta}\ .
 \eeq
 Let  $U$ be a $K\times p$ matrix which gathers the eigenvectors of $A CA^t$ associated to the $K$ leading eigenvalues.
The associated eigenvectors are block constant. Therefore $U_{0}=AQ_{0}$, and since $A^tA=mI$, the matrix $\sqrt{m}Q_{0}$ is orthogonal.

 We apply Lemma \ref{lem:kmeans} with
 $M'=\widehat U$ and $M=U_{0}\widehat O$, where $\widehat O$ is a  $K\times K$ orthogonal matrix to be chosen. 
We have $M=AQ$ with $\sqrt{m}Q=\sqrt{m}Q_{0}\widehat O$ orthogonal. In particular, the minimal distance between two rows of $Q$ is $\delta=\sqrt{2/m}$. Lemma \ref{lem:kmeans} ensures that
\begin{equation}\label{eq:spec1}
\bar{L}(\hat G_{S},G) \leq c_{\eta} {\|\widehat U-U_{0}\widehat O\|_F^2 \over 2},
\end{equation}
whenever the right-hand side is smaller than 1.
By Davis-Kahan inequality (e.g. \cite{LeiRinaldo}), there exists an orthogonal matrix $\widehat O$ such that
\begin{equation}\label{eq:spec2}
 \|\widehat U-U_{0}\widehat O\|_F^2\leq {8K  \|\widetilde \Sigma-ACA^t\|^2_{op} \over m^2 \lambda^2_{K}(C)}\ .
 \end{equation}
 We can upper-bound the operator norm of $\widetilde{\Sigma}-A CA^t$ by
$$\|\widetilde{\Sigma}-A CA^t\|_{op}\leq \|\widehat{\Sigma}-\Sigma\|_{op}+ \|\widehat{\Gamma}-\Gamma\|_{op}\ .$$
According to Theorem 1 in \cite{koltchinskii2014concentration} (see also \cite{BuneaEtal}), there exists a constant $c>0$ such that, with  probability at least $1-1/p$
\begin{eqnarray*}\nonumber
\|\widehat{\Sigma}-\Sigma\|_{op}&\leq& c \|\Sigma\|_{op} \left(\sqrt{Re(\Sigma)\over n}\bigvee {Re(\Sigma)\over n}\bigvee \sqrt{\log(p)\over n} \bigvee \frac{\log(p)}{n} \right)\\
&\leq&  c \|\Sigma\|_{op} \left(\sqrt{Re(\Sigma)\over n} \bigvee \sqrt{\log(p)\over n}  \right) ,
\end{eqnarray*}
where we used \eqref{eq:condition_Re_log} in the second line.

Then, using that $\|\widehat{\Gamma}-\Gamma\|_{op}=|\widehat{\Gamma}-\Gamma|_{\infty}$ and Proposition \ref{prp:control_u_gamma2} together with $|\Gamma|_{\infty}\leq \|\Sigma\|_{op}$, we obtain the inequality
\begin{equation}\label{eq:KarimVladimir}
 \|\widetilde{\Sigma}-A CA^t\|_{op} \leq c  \|\Sigma\|_{op}\left(\sqrt{Re(\Sigma)\over n} \bigvee \sqrt{\log(p)\over n}  \right)\ , 
\end{equation}
with probability at least $1-c/p$.
  So combining (\ref{eq:spec1}), with (\ref{eq:spec2}) and (\ref{eq:KarimVladimir})
 we obtain the existence of $c'_{\eta}>0$ such that we have
$$\bar{L}(\hat G_S,G) \leq {c'_{\eta}K \|\Sigma\|_{op}^2\over m^2\lambda_{K}(C)^2}\left(\sqrt{\frac{Re(\Sigma)}{n}}\bigvee  \sqrt{\log(p)\over n}\right)^2,$$
with probability at least $1-c/p$, whenever the right-hand side is smaller than 1.  
 The proof of Theorem \ref{prp:spectral_clustering} follows. 
 
\subsection{Proof of Lemma  \ref{CSC}}
We recall that $\widehat{U}$ is  the $p\times K$ matrix stacking the $K$ leading  eigenvectors of 
$\widetilde{\Sigma} = \widehat{\Sigma} - \widehat{\Gamma}$.
We first prove that the matrix $\widehat{U}\widehat{U}^t$ is solution of (\ref{SDP2}).

Let us write $\widetilde \Sigma=\widetilde U \widetilde D \widetilde U^t$ for a diagonalisation of 
$\widetilde \Sigma$ with $\widetilde U$ orthogonal and $\widetilde D_{11}\geq \ldots \geq  \widetilde D_{pp}\geq 0$.  We observe that $\langle \widetilde \Sigma,B\rangle= \langle \widetilde D,\widetilde U^t B \widetilde U\rangle$, and that $B\in \overline{\mathcal{C}}$ iff $\widetilde U^t B \widetilde U \in \overline{\mathcal{C}}$  since the matrix $\widetilde B=\widetilde U^t B \widetilde U$ has the same eigenvalues as $B$. We observe also that $\widehat{U}\widehat{U}^t=\widetilde U\Pi_{K}\widetilde U^t$, where $\Pi_{K}$ is the diagonal matrix, with 1 on the first $K$ diagonal elements and 0 on the $p-K$ remaining ones.
So proving that $\overline{B}=\widehat{U}\widehat{U}^t$ is solution of (\ref{SDP2})
is equivalent to proving that 
$$\Pi_{K}=\argmax_{\widetilde B\in \overline{\mathcal{C}}} \langle \widetilde D,\widetilde B\rangle.$$
 Let us prove this result.

To start with, we notice that 
$$ \sum_{k=1}^K\widetilde D_{kk}=\max_{0\leq \widetilde B_{kk}\leq 1;\ \sum_{k}\widetilde B_{kk}=K} \langle \widetilde D,\widetilde B\rangle.$$
Since the condition $I \succcurlyeq \widetilde B\succcurlyeq 0$ enforces $0\leq \widetilde B_{kk}\leq 1$, we  have $\overline{\mathcal{C}}\subset \{ B:  0\leq \widetilde B_{kk}\leq 1;\ \sum_{k}\widetilde B_{kk}=K\}$ and then
$$\max_{\widetilde B\in \overline{\mathcal{C}}} \langle \widetilde D,\widetilde B\rangle\leq \sum_{k=1}^K\widetilde D_{kk}=\langle \widetilde D,\Pi_{K}\rangle.$$
 Hence $\Pi_{K}$ is solution to the above maximisation problem and $\overline B=\widetilde U \Pi_{K} \widetilde U^t=\widehat U \widehat U^t$. 
 
To conclude the proof, we notice that $\widehat U_{a:}\widehat U^t$ is an orthogonal transformation of $\widehat U_{a:}$, so we obtain the same results when applying a rotationally invariant clustering algorithm to the rows of $\widehat U$ and to the rows of $\widehat U \widehat U^t$. 

\bibliography{biblio}
\bibliographystyle{plain}

\appendix
\section*{Appendix}
\section{An alternative estimator of $\Gamma$}\label{app:gamma}
We propose here a more simple estimator of $\Gamma$. It has the nice feature to have a smaller computational complexity than \eqref{eq:estim:gamma2}, but the drawback to have fluctuations possibly proportional to $|\Sigma|_{\infty}^{1/2}$. 

For any $a\in [p]$, define 
\beq \label{eq:definition_ne(a)}
ne(a):= \argmin_{b\in [p]\setminus\{a\}}\,  \max_{c\neq a,b}\big|\langle \bX_{:a}-\bX_{:b}, \frac{\bX_{:c}}{|\bX_{:c}|_2}\rangle\big| \ ,
\eeq
the ``neighbor'' of $a$, that is the variable $\bX_{:b}$ such that the covariance $\langle \bX_{:b} ,\bX_{:c}\rangle$ is most similar to $\langle \bX_{:a} ,\bX_{:c}\rangle$, this for all variables $c$. It is expected that $ne(a)$ belongs to the same group of $a$, or if it is not the case that the difference $C^*_{kk}-C^*_{kj}$, where $a\in G^*_k$ and $ne(a)\in G^*_j$, is small. 

Then, the diagonal matrix $\widehat\Gamma$ is defined by 
\beq\label{eq:estim:gamma}
\widehat\Gamma_{aa}= {1\over n}\langle \bX_{:a}- \bX_{:ne(a)},\bX_{:a}\rangle,\ \textrm{for}\ a=1,\ldots,p.
\eeq 
In population version, this quantity is of order $\Gamma_{aa}+ C^*_{k(a)k(a)}-C^*_{k(a)k(ne(a))}$ ($k(a)$ and $k(ne(a))$ respectively stand for the group of $a$ and $ne(a)$)  and should therefore be of order $\Gamma_{aa}$ if the last intuition is true. As shown by the following proposition, the above discussion can be made rigorous.

 \begin{prp}\label{prp:control_u_gamma}
 There exist three numerical constants $c_1$--$c_3$  such that the following holds. Assume that $m\geq 2$ and that $\log(p)\leq c_1 n$. 
 With probability larger than $1-c_{3}/p$, the estimator $\widehat\Gamma$ defined by (\ref{eq:estim:gamma}) satisfies
 \beq\label{eq:control_check_gamma}
|\widehat\Gamma - \Gamma|_{V}\leq 2 |\widehat\Gamma - \Gamma|_{\infty}\leq c_2 |\Gamma|^{1/2}_\infty|\Sigma|^{1/2}_{\infty}\sqrt{\frac{\log(p)}{n}}\ .
 \eeq

 \end{prp}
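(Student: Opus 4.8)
The plan is to follow closely the structure of the proof of Proposition~\ref{prp:control_u_gamma2}. Since $|D|_V\le 2|D|_\infty$ for every diagonal matrix $D$, it suffices to bound $\max_a|\widehat\Gamma_{aa}-\Gamma_{aa}|$ with high probability. Fix $a$, let $k,j$ be the indices with $a\in G_k$ and $ne(a)\in G_j$, and substitute $\bX_{:a}=\bZ_{:k}+\bE_{:a}$ and $\bX_{:ne(a)}=\bZ_{:j}+\bE_{:ne(a)}$ into $\widehat\Gamma_{aa}=\tfrac1n\langle \bX_{:a}-\bX_{:ne(a)},\bX_{:a}\rangle$. Expanding the scalar product yields the decomposition
\[
\widehat\Gamma_{aa}-\Gamma_{aa}=\Big[\tfrac1n|\bE_{:a}|_2^2-\Gamma_{aa}\Big]+\tfrac1n\langle\bZ_{:k}-\bZ_{:j},\bZ_{:k}\rangle+\tfrac1n\langle\bZ_{:k}-\bZ_{:j},\bE_{:a}\rangle+\tfrac1n\langle\bE_{:a}-\bE_{:ne(a)},\bZ_{:k}\rangle-\tfrac1n\langle\bE_{:ne(a)},\bE_{:a}\rangle,
\]
in which the second term is the bias incurred by a possibly incorrect neighbour, and the remaining terms are ``noise''.

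First I would dispose of every term except the bias term. The first term is a chi-square fluctuation controlled by Lemma~\ref{lem:quadratic} and a union bound over $a$. The term $\tfrac1n\langle\bE_{:ne(a)},\bE_{:a}\rangle$, after a union bound over the value of $ne(a)\neq a$, is a centred Gaussian bilinear form $\tfrac1n\langle\bE_{:b},\bE_{:a}\rangle$; conditioning on $\bE_{:a}$ and using $|\bE_{:a}|_2^2\lesssim n\Gamma_{aa}$ gives $\lesssim|\Gamma|_\infty\sqrt{\log(p)/n}$. Likewise $\tfrac1n\langle\bE_{:a}-\bE_{:ne(a)},\bZ_{:k}\rangle$ is, conditionally on $\bZ$ and after union bounds over $a$ and $ne(a)$, Gaussian with variance $\tfrac1{n^2}|\bZ_{:k}|_2^2(\Gamma_{aa}+\Gamma_{ne(a),ne(a)})$, and with $|\bZ_{:k}|_2^2\lesssim n\,C_{kk}\le n|\Sigma|_\infty$ it is $\lesssim|\Sigma|_\infty^{1/2}|\Gamma|_\infty^{1/2}\sqrt{\log(p)/n}$; and $\tfrac1n\langle\bZ_{:k}-\bZ_{:j},\bE_{:a}\rangle$ is, conditionally on $\bZ$, Gaussian with variance $\tfrac1{n^2}|\bZ_{:k}-\bZ_{:j}|_2^2\Gamma_{aa}$, hence $\lesssim\tfrac1n|\bZ_{:k}-\bZ_{:j}|_2\,|\Gamma|_\infty^{1/2}\sqrt{\log p}$, the residual factor $|\bZ_{:k}-\bZ_{:j}|_2$ being absorbed using the bias bound proved below. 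Throughout one uses $|\Gamma|_\infty\le|\Sigma|_\infty$ (because $\Sigma_{aa}=C_{k(a)k(a)}+\Gamma_{aa}\ge\Gamma_{aa}$), the chi-square controls on $|\bZ_{:k}|_2^2$ and $|\bE_{:c}|_2^2$, and $\log(p)\le c_1n$, so that each of these terms is $\lesssim|\Gamma|_\infty^{1/2}|\Sigma|_\infty^{1/2}\sqrt{\log(p)/n}$ uniformly over $a$ on an event of probability $\ge 1-c/p$.

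The hard part is the bias term $\tfrac1n\langle\bZ_{:k}-\bZ_{:j},\bZ_{:k}\rangle$; it vanishes when $j=k$, so assume $j\neq k$. Since $m\ge2$ there is $b^\ast\in G_k\setminus\{a\}$, and because $\bX_{:a}-\bX_{:b^\ast}=\bE_{:a}-\bE_{:b^\ast}$ carries no latent component, conditioning on $\bX_{:c}$ and a union bound give $\max_{c\neq a,b^\ast}|\langle\bX_{:a}-\bX_{:b^\ast},\bX_{:c}/|\bX_{:c}|_2\rangle|\lesssim|\Gamma|_\infty^{1/2}\sqrt{\log p}$ with high probability; by the minimality defining $ne(a)$ the same bound holds with $b^\ast$ replaced by $ne(a)$. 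I would then apply this inequality once with a column $c\in G_k\setminus\{a\}$ and once with a column $d\in G_j\setminus\{ne(a)\}$ — both sets nonempty because $m\ge2$, and both choices automatically distinct from $a$ and $ne(a)$ since $G_k\cap G_j=\emptyset$ — expand $\langle\bX_{:a}-\bX_{:ne(a)},\bX_{:c}\rangle$ and $\langle\bX_{:a}-\bX_{:ne(a)},\bX_{:d}\rangle$ exactly as in the decomposition above, and bound the noise cross terms by the estimates of the previous paragraph together with $|\bX_{:c}|_2,|\bX_{:d}|_2\lesssim\sqrt{n|\Sigma|_\infty}$. This produces
\[
|\langle\bZ_{:k}-\bZ_{:j},\bZ_{:k}\rangle|\vee|\langle\bZ_{:k}-\bZ_{:j},\bZ_{:j}\rangle|\ \lesssim\ |\Gamma|_\infty^{1/2}|\Sigma|_\infty^{1/2}\sqrt{n\log p}+|\bZ_{:k}-\bZ_{:j}|_2\,|\Gamma|_\infty^{1/2}\sqrt{\log p}\ .
\]
Subtracting the two scalar products gives $|\bZ_{:k}-\bZ_{:j}|_2^2$ on the left, so this is a self-bounding quadratic inequality $x^2\lesssim A+Bx$ in $x=|\bZ_{:k}-\bZ_{:j}|_2$; solving it yields $x\lesssim|\Gamma|_\infty^{1/2}\sqrt{\log p}\vee(|\Gamma|_\infty^{1/2}|\Sigma|_\infty^{1/2}\sqrt{n\log p})^{1/2}$. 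Plugging this back, dividing by $n$, and using once more $|\Gamma|_\infty\le|\Sigma|_\infty$ and $\log p\le c_1n$ gives $\tfrac1n|\langle\bZ_{:k}-\bZ_{:j},\bZ_{:k}\rangle|\lesssim|\Gamma|_\infty^{1/2}|\Sigma|_\infty^{1/2}\sqrt{\log(p)/n}$, and the same bound handles the leftover $|\bZ_{:k}-\bZ_{:j}|_2$ factor from the easy cross term. Intersecting the finitely many high-probability events above (total failure probability $\lesssim 1/p$ after adjusting constants) and collecting the pieces finishes the proof. I expect this self-bounding step to be the main obstacle, and it also explains why the rate here carries $|\Sigma|_\infty^{1/2}$ rather than $|\Gamma|_\infty$ as in Proposition~\ref{prp:control_u_gamma2}: there the estimator uses differences $\bX_{:c}-\bX_{:d}$ and the relevant bias is $\tfrac1n|\bZ_{:k}-\bZ_{:l}|_2^2$, whereas here single columns $\bX_{:c}$ are used, so only $\langle\bZ_{:k}-\bZ_{:j},\bZ_{:k}\rangle$ and $\langle\bZ_{:k}-\bZ_{:j},\bZ_{:j}\rangle$ are accessible and must be combined, which unavoidably brings in $|\bZ_{:k}|_2$, of order $\sqrt{nC_{kk}}$.
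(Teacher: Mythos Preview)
Your argument is correct, but it takes a more circuitous route than the paper's. You fully expand $\bX_{:a}-\bX_{:ne(a)}$ into its latent and error parts, which forces you to control the bias piece $\tfrac1n\langle \bZ_{:k}-\bZ_{:j},\bZ_{:k}\rangle$ and, in turn, to bound $|\bZ_{:k}-\bZ_{:j}|_2$ via a self-bounding quadratic inequality obtained from two applications of the $ne(a)$ minimality (one with $c\in G_k$, one with $d\in G_j$). The paper avoids this detour entirely: in case $ne(a)\notin G_k$ it keeps $\bX_{:a}-\bX_{:ne(a)}$ whole and writes $\bZ_{:k}=\bX_{:b}-\bE_{:b}$ for some $b\in G_k\setminus\{a\}$, so that the problematic term becomes $\langle \bX_{:a}-\bX_{:ne(a)},\bX_{:b}\rangle-\langle \bX_{:a}-\bX_{:ne(a)},\bE_{:b}\rangle$. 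The first piece is bounded directly by the defining property of $ne(a)$ together with $|\bX_{:b}|_2\lesssim \sqrt{n|\Sigma|_\infty}$, and the second by a supremum of centered quadratic forms; no control of $|\bZ_{:k}-\bZ_{:j}|_2$ is ever needed. Your approach is closer in spirit to the proof of Proposition~\ref{prp:control_u_gamma2}, where the quadratic self-bound is genuinely required because the estimator there uses differences $\bX_{:c}-\bX_{:d}$; for the simpler estimator \eqref{eq:estim:gamma} that machinery is unnecessary, and the paper's shortcut is both shorter and cleaner. That said, your route does yield the extra information that $|\bZ_{:k}-\bZ_{:j}|_2$ itself is small on the high-probability event, which the paper's argument does not make explicit.
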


The PECOK estimator  with $\widehat \Gamma$ defined by (\ref{eq:estim:gamma}) then fulfills the following recovering property.

\begin{cor}\label{thm:consistency_2_steps}
There exist $c_1,\ldots, c_3$ three positive constants such that the following holds. 
Assuming that   $\widehat \Gamma$ is defined by (\ref{eq:estim:gamma}), $\log(p)\leq c_1 n$, 
and that 
\begin{equation}\label{eq:assumption2_2steps}
 \Delta(C^*) \geq c_2 \left[|\Gamma|_{\infty}\left\{\sqrt{  \frac{\log p}{mn}   }+    \sqrt{\frac{p}{nm^2}} + \frac{\log(p)}{n}+ \frac{p}{nm}\right\} + |\Gamma|_{\infty}^{1/2}|C^*|^{1/2}_{\infty} \sqrt{\frac{\log(p)}{nm^2}} \right]\ ,
\end{equation}
then we have
$\widehat{B} = B^*$ and $\widehat G=G^*$, with probability higher than $1 - c_3/p$.
\end{cor}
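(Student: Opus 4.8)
The plan is to obtain this corollary as a direct combination of Proposition~\ref{prp:control_u_gamma} (control of the alternative estimator $\widehat\Gamma$) and Theorem~\ref{thm:consistency} (exact recovery for a generic $\widehat\Gamma$), exactly in the spirit of the proof of Corollary~\ref{cor:consistency_2_steps2}. Concretely, I would set
\[
\delta_{n,p} := c\,|\Gamma|_\infty^{1/2}|\Sigma|_\infty^{1/2}\sqrt{\tfrac{\log p}{n}},
\]
which by Proposition~\ref{prp:control_u_gamma} satisfies $|\widehat\Gamma-\Gamma|_V\le \delta_{n,p}$ on an event of probability at least $1-c'/p$; adjusting constants we may assume this probability is at least $1-c_3/(2p)$, as required by Theorem~\ref{thm:consistency}. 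It then remains to verify that the separation assumption \eqref{eq:assumption2_2steps} of the corollary implies the separation assumption \eqref{eq:assumption2} of Theorem~\ref{thm:consistency} for this value of $\delta_{n,p}$, after which the conclusion $\widehat B=B^*$ follows on the intersection of the two events, of probability at least $1-c_3/p$; finally $\widehat G=G^*$ because, by construction, the Step~3 clustering algorithm returns $G^*$ when fed $B^*$.

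The only genuine computation is the comparison of the two separation conditions. From the block decomposition $\Sigma=AC^*A^t+\Gamma$ one has $(AC^*A^t)_{ab}=C^*_{k(a)k(b)}$, hence $|\Sigma|_\infty\le |C^*|_\infty+|\Gamma|_\infty$, and therefore
\[
\frac{\delta_{n,p}}{m}\;\le\; \frac{c}{m}\,|\Gamma|_\infty^{1/2}\bigl(|C^*|_\infty^{1/2}+|\Gamma|_\infty^{1/2}\bigr)\sqrt{\tfrac{\log p}{n}}
\;\lesssim\; |\Gamma|_\infty^{1/2}|C^*|_\infty^{1/2}\sqrt{\tfrac{\log p}{nm^2}}\;+\;|\Gamma|_\infty\sqrt{\tfrac{\log p}{nm^2}}\,,
\]
using $\sqrt{x+y}\le\sqrt x+\sqrt y$. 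The second term is bounded by $|\Gamma|_\infty\sqrt{\log p/(nm)}$ since $m\ge 1$, so it is already absorbed into the first term of the brace in \eqref{eq:assumption2_2steps}, while the first term is precisely the extra summand $|\Gamma|_\infty^{1/2}|C^*|_\infty^{1/2}\sqrt{\log p/(nm^2)}$ appearing in \eqref{eq:assumption2_2steps}. Consequently the right-hand side of \eqref{eq:assumption2}, namely $c_2\bigl[|\Gamma|_\infty\{\cdots\}+\delta_{n,p}/m\bigr]$, is bounded above (up to a numerical constant) by the right-hand side of \eqref{eq:assumption2_2steps}; choosing the constant $c_2$ in the corollary large enough then guarantees \eqref{eq:assumption2}.

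I do not expect any real obstacle here: there is no new probabilistic content beyond Proposition~\ref{prp:control_u_gamma} and Theorem~\ref{thm:consistency}, and the ``hard part'' is merely the bookkeeping above, in particular making sure the extra $|\Gamma|_\infty^{1/2}|C^*|_\infty^{1/2}$ term in \eqref{eq:assumption2_2steps} is exactly what is needed to dominate $\delta_{n,p}/m$ when $\widehat\Gamma$ is the cruder estimator \eqref{eq:estim:gamma} (whose error scales with $|\Sigma|_\infty^{1/2}$ rather than $|\Gamma|_\infty^{1/2}$, which is why this condition is strictly stronger than \eqref{eq:assumption2_2steps2}). The probability accounting is routine: union-bound the $O(1/p)$ failure probabilities of the $\widehat\Gamma$-bound and of Theorem~\ref{thm:consistency}, and rename constants.
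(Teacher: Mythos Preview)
Your proposal is correct and follows exactly the approach the paper takes (implicitly): the corollary is not given a separate proof in the paper, but it is the direct analogue of Corollary~\ref{cor:consistency_2_steps2}, whose one-line proof combines Theorem~\ref{thm:consistency} with the relevant $\widehat\Gamma$-control proposition, here Proposition~\ref{prp:control_u_gamma}. Your bookkeeping with $|\Sigma|_\infty\le |C^*|_\infty+|\Gamma|_\infty$ and the absorption of the resulting two pieces of $\delta_{n,p}/m$ into the terms of \eqref{eq:assumption2_2steps} is exactly what is needed.
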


The additional term $|\Gamma|_{\infty}^{1/2}|C^*|^{1/2}_{\infty}\sqrt{\frac{\log(p)}{nm^2}}$ term is smaller than $|\Gamma|_{\infty}\sqrt{\frac{\log(p)}{nm}}$ when $|C^*|_{\infty}\leq m |\Gamma|_{\infty}$, which is likely to occur when $m$ is large.  

\subsection{Proof of Proposition \ref{prp:control_u_gamma}}

 Consider any $a\in [p]$, let $k$ be the group such that $a\in G^*_k$. We now divide the analysis into two cases: (i) $ne(a)\in G^*_k$; (ii) $ne(a)\notin G^*_k$. 
 
 In case (i), we directly control the difference:
\beqn
 |\widehat\Gamma_{aa} - \Gamma_{aa}|&=&\big| \langle \bX_{:a}- \bX_{:ne(a)},\bX_{:a}\rangle/n - \Gamma_{aa}\big|\\
 &\leq & \big|\Gamma_{aa}- |\bE_{:a}|^2_2/n\big| + \big|\langle \bE_{:ne(a)}, \bE_{:a}\rangle/n\big|+ \big|\langleÂ \bE_{:a}-\bE_{:ne(a)} , \bZ_{:k} \rangle/n\big|\\
 &\leq &\big|\Gamma_{aa}- |\bE_{:a}|^2_2/n\big| + \big|\langle \bE_{:ne(a)}, \bE_{:a}\rangle/n\big|+ \big|\langleÂ \bE_{:a}-\bE_{:ne(a)} , \bZ_{:k} \rangle/n\big|\\
 &\leq & \big|\Gamma_{aa}- |\bE_{:a}|^2_2/n\big|+ \sup_{c\in G^*_k\setminus\{a\}}\left(\big|\langle \bE_{:c}, \bE_{:a}\rangle/n\big|+ \big|\langle\bE_{:a}-\bE_{:c} , \bZ_{:k} \rangle/n\big|\right)
\eeqn
The random variable $|\bE_{:a}|^2_2/\Gamma_{a,a}$ follows a $\chi^2$ distribution with $n$ degrees of freedom whereas the remaining variables are centered quadratic form of independent Gaussian variables. Applying the deviation inequality for Gaussian quadratic forms (\ref{eq:quadratic}) together with an union bound, we arrive at
\[
 |\widehat\Gamma_{aa} - \Gamma_{aa}|\leq c |\Gamma|^{1/2}_{\infty}|\Sigma|^{1/2}_{\infty} \sqrt{\frac{\log(p)}{n}} ,
\]
with probability larger than $1-1/p^{2}$. 
\medskip 

Let us turn to case (ii): $ne(a)\notin G^*_k$. Let $b\in G^*_k$, with $b\neq a$. We have
\beqn
 |\widehat\Gamma_{aa} - \Gamma_{aa}|&=&\big| \langle \bX_{:a}- \bX_{:ne(a)},\bX_{:a}\rangle/n - \Gamma_{aa}\big|\\
&\leq & \big| \langle \bX_{:a}- \bX_{:ne(a)},\bE_{:a}\rangle/n - \Gamma_{aa}\big| + \big| \langle \bX_{:a}- \bX_{:ne(a)},\bZ_{:k}\rangle/n \big|\\
&\leq & \big||\bE_{:a}|_{2}^2/n - \Gamma_{aa}\big|+  \big|\langle \bZ_{:a}- \bX_{:ne(a)},\bE_{:a}\rangle/n + \big| \langle \bX_{:a}- \bX_{:ne(a)},\bX_{:b}\rangle/n \big|\\
&&+  \big| \langle \bX_{:a}- \bX_{:ne(a)},\bE_{:b}\rangle/n \big|\ ,
\eeqn
where we used $\bX_{:b}= \bE_{:b}+ \bZ_{:k}$ since $b\in G^*_k$. In the above inequality, the difference $||\bE_{:a}|_{2}^2/n - \Gamma_{aa}\big|$ is handled as in the previous case. The second term $ |\langle \bZ_{:k}- \bX_{:ne(a)},\bE_{:a}\rangle/n|\leq \sup_{c \notin G^*_k}|\langle \bZ_{:k}- \bX_{:c},\bE_{:a}\rangle/n|$ is bounded by a supremum of centered quadratic forms of Gaussian variables and is therefore smaller than $c\sqrt{\log(p)/n}|\Gamma|_{\infty}|^{1/2}\Sigma|_{\infty}^{1/2}$ with probability larger than $1-p^{-2}$. The fourth term $\big| \langle \bX_{:a}- \bX_{:ne(a)},\bE_{:b}\rangle/n \big|$ is handled analogously. To control the last quantity $\big| \langle \bX_{:a}- \bX_{:ne(a)},\bX_{:b}\rangle/n \big|$, we use the definition \eqref{eq:definition_ne(a)} 
$$
\big|\big\langle \bX_{:a}-\bX_{:ne(a)}, \frac{\bX_{:b}}{|\bX_{:b}|_2}\big\rangle\big|\leq
\max_{c\neq a, ne(a)}\big|\big\langle \bX_{:a}-\bX_{:ne(a)}, \frac{\bX_{:c}}{|\bX_{:c}|_2}\big\rangle\big|\leq
\max_{c\neq a,b}\big|\big\langle \bX_{:a}-\bX_{:b}, \frac{\bX_{:c}}{|\bX_{:c}|_2}\big\rangle\big|.
$$
Since  $b\in G^*_k$, the random variable $\bX_{:a}-\bX_{:b}= \bE_{:a}-\bE_{:b}$ is  independent from all  $\bX_{:c}$, with $c\neq a,b$. Since $\var{E_{a}-E_{b}}\leq 2|\Gamma|_{\infty}$, we use the Gaussian concentration inequality together with an union bound to get
 \begin{equation*}\label{eq:event}
  \max_{c\neq a,b} \big|\big\langle \bX_{:a}-\bX_{:b}, \frac{\bX_{:c}}{|\bX_{:c}|_2}\big\rangle\big|\leq |\Gamma|^{1/2}_{\infty}\sqrt{12\log(p)}
 \end{equation*}
 with probability larger than $1-p^{-2}$. As a consequence, 
\[
 \big|\big\langle \bX_{:a}-\bX_{:ne(a)}, \frac{\bX_{:b}}{|\bX_{:b}|_2}\big\rangle\big|\leq |\Gamma|^{1/2}_{\infty}\sqrt{12\log(p)},
\]
 with probability larger than $1-p^{-2}$.
Since $|\bX_{:b}|_2^2/\Sigma_{bb}$ follows a $\chi^2$ distribution with $n$ degrees of freedom, we have $|\bX_{:b}|_2\leq c  n^{1/2}|\Sigma|_{\infty}^{1/2}$ with probability larger than $1-p^{-2}$. Putting everything together, we have shown that 
\[
  |\widehat\Gamma_{aa} - \Gamma_{aa}|\leq c |\Gamma|^{1/2}_{\infty}|\Sigma|_{\infty}^{1/2} \sqrt{\frac{\log(p)}{n}}\ ,
\]
with probability larger than $1-c'/p^2$. Taking an union bound over all $a\in [p]$ concludes the proof.

\section{Deviation inequalities}\label{sec:LaurentMassart}

\begin{lem}[Quadratic forms of Gaussian variables~\cite{Laurent00}]\label{lem:quadratic}
Let $Y$ stands for a standard Gaussian vector of size $k$ and let $A$ be a symmetric matrix of size $k$. For any $t>0$, 
\beq\label{eq:quadratic}
 \mathbb{P}\left[Y^t A Y \geq tr(A)+ 2\|A\|_F\sqrt{t}+2 \|A\|_{op}t\right]\leq e^{-t}\ .
\eeq
\end{lem}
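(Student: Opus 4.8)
This is the classical deviation bound of Laurent and Massart~\cite{Laurent00}, and the plan is to recall its proof via the Laplace transform (Chernoff) method. First I would reduce to a weighted sum of independent chi-squares: since $A$ is symmetric, write $A = P\Lambda P^{t}$ with $P$ orthogonal and $\Lambda = \diag(\lambda_{1},\dots,\lambda_{k})$; then $Z := P^{t}Y$ is again a standard Gaussian vector and
\[
Y^{t}AY - \tr(A) \;=\; \sum_{i=1}^{k}\lambda_{i}\big(Z_{i}^{2}-1\big),
\]
with $\sum_{i}\lambda_{i}^{2} = \|A\|_{F}^{2}$ and $\max_{i}|\lambda_{i}| = \|A\|_{op}$. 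For $0 < s < 1/(2\|A\|_{op})$ every factor $1-2s\lambda_{i}$ is positive, so by independence and the chi-square Laplace transform $\E[e^{s\lambda_{i}Z_{i}^{2}}] = (1-2s\lambda_{i})^{-1/2}$ one gets
\[
\log \E\!\left[e^{s\left(Y^{t}AY - \tr(A)\right)}\right] \;=\; -\tfrac12\sum_{i=1}^{k}\big[\log(1-2s\lambda_{i}) + 2s\lambda_{i}\big].
\]

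Next I would bound this log-Laplace transform using the elementary inequalities $-\log(1-u)-u \le u^{2}/2$ for $u \le 0$ and $-\log(1-u)-u \le \frac{u^{2}}{2(1-u)}$ for $0 \le u < 1$ (each checked by noting that the corresponding difference is monotone in $u$ on $(-\infty,1)$ and vanishes at $0$), applied with $u = 2s\lambda_{i}$ and $1-u \ge 1-2s\|A\|_{op}$; the eigenvalues $\lambda_{i}\le 0$ only help, since they make $1-2s\lambda_{i}\ge 1$. This yields the sub-gamma control
\[
\log \E\!\left[e^{s\left(Y^{t}AY - \tr(A)\right)}\right] \;\le\; \frac{s^{2}\|A\|_{F}^{2}}{2\left(1-2s\|A\|_{op}\right)}, \qquad 0 < s < \frac{1}{2\|A\|_{op}}.
\]

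Finally, by Markov's inequality, for any such $s$,
\[
\P\big[Y^{t}AY - \tr(A) \ge v\big] \;\le\; \exp\!\left(-sv + \frac{s^{2}\|A\|_{F}^{2}}{2\left(1-2s\|A\|_{op}\right)}\right);
\]
taking $v = \|A\|_{F}\sqrt{2t} + 2\|A\|_{op}t$ and $s = \sqrt{2t}\big/\big(\|A\|_{F} + 2\|A\|_{op}\sqrt{2t}\big) \in \big(0, 1/(2\|A\|_{op})\big)$, a direct computation shows the exponent equals $-t$, so $\P[\,Y^{t}AY \ge \tr(A) + \|A\|_{F}\sqrt{2t} + 2\|A\|_{op}t\,] \le e^{-t}$; since $\sqrt{2t}\le 2\sqrt{t}$, the (slightly weaker) stated bound follows a fortiori. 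I expect no real obstacle here, as the argument is entirely standard; the only point requiring a little care is the log-Laplace estimate, where the non-positive eigenvalues must be handled with the plain quadratic bound rather than the $\frac{u^{2}}{2(1-u)}$ one, and where one must keep $1-2s\|A\|_{op}>0$ throughout the Chernoff optimization.
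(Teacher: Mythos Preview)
Your argument is exactly the standard Laurent--Massart computation that the paper is invoking; the paper itself does not give a proof but simply cites \cite{Laurent00} and remarks that their argument (stated there for positive semidefinite $A$) extends verbatim to general symmetric $A$ --- precisely the extension you carry out by handling the nonpositive eigenvalues with the cruder bound $-\log(1-u)-u\le u^2/2$.

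There is, however, one computational slip: the log-Laplace bound you display is off by a factor of $2$. From $-\log(1-u_i)-u_i\le \tfrac{u_i^2}{2(1-2s\|A\|_{op})}$ with $u_i=2s\lambda_i$ one obtains
\[
\log\E\big[e^{s(Y^tAY-\tr A)}\big]=\tfrac12\sum_i\big[-\log(1-u_i)-u_i\big]\le \frac{\sum_i (2s\lambda_i)^2}{4(1-2s\|A\|_{op})}=\frac{s^2\|A\|_F^2}{1-2s\|A\|_{op}}\ ,
\]
not $\tfrac{s^2\|A\|_F^2}{2(1-2s\|A\|_{op})}$. With the correct bound (i.e.\ $\nu=2\|A\|_F^2$ and $c=2\|A\|_{op}$ in the usual sub-gamma form $\tfrac{\nu s^2}{2(1-cs)}$), the Chernoff optimization gives directly $\P[Y^tAY-\tr A\ge 2\|A\|_F\sqrt{t}+2\|A\|_{op}t]\le e^{-t}$, which is the lemma, and your final weakening $\sqrt{2t}\le 2\sqrt{t}$ becomes unnecessary. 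As written, the intermediate inequality you claim is stronger than what the preceding estimates actually yield, though the overall strategy and the final conclusion are correct once this constant is fixed.
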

Laurent and Massart~\cite{Laurent00} have only stated a specific version of Lemma \ref{lem:quadratic} for positive matrices $A$, but their argument straightforwardly extend to general symmetric matrices $A$.

\end{document}